\theoremstyle{plain}
\newtheorem{thm}{Theorem}[section]
\newtheorem{lem}[thm]{Lemma}
\newtheorem{prop}[thm]{Proposition}
\theoremstyle{definition}
\newtheorem{defi}[thm]{Definition}
\theoremstyle{remark}
\newtheorem{rem}[thm]{Remark}
\numberwithin{equation}{section}
\newcommand{\average}{{\mathchoice {\kern1ex\vcenter{\hrule height.4pt
width 6pt depth0pt} \kern-9.7pt} {\kern1ex\vcenter{\hrule
height.4pt width 4.3pt depth0pt} \kern-7pt} {} {} }}
\newcommand{\ave}{\average\int}
\def\R{\mathbb{R}}
\newcommand{\I}{{\rm I}}
\begin{document}

\title[Obstacle problems for integro-differential operators]{\hspace{-20mm}Obstacle problems for integro-differential operators:\hspace{-20mm} \vspace{1mm}\\ Regularity of solutions and free boundaries}

\author[L. Caffarelli]{Luis Caffarelli}
\address{The University of Texas at Austin, Department of Mathematics, 2515 Speedway, Austin, TX 78751, USA}
\email{caffarel@math.utexas.edu}

\author[X. Ros-Oton]{Xavier Ros-Oton}
\address{The University of Texas at Austin, Department of Mathematics, 2515 Speedway, Austin, TX 78751, USA}
\email{ros.oton@math.utexas.edu}

\author[J. Serra]{Joaquim Serra}
\address{Universitat Polit\`ecnica de Catalunya, Departament de Matem\`{a}tiques, Avda. Diagonal 647, 08028 Barcelona, Spain}
\email{joaquim.serra@upc.edu}

\thanks{XR was supported by NSF grant DMS-1565186. XR and JS were supported by MINECO grant MTM2014-52402-C3-1-P (Spain)}

\keywords{Obstacle problem; integro-differential operators; regularity}

\subjclass[2010]{35R35; 47G20; 35B65.}

\maketitle

\begin{abstract}
We study the obstacle problem for integro-differential operators of order $2s$, with $s\in (0,1)$.

Our main result establish that the free boundary is $C^{1,\gamma}$ and $u\in C^{1,s}$ near all regular points.
Namely, we prove the following dichotomy at all free boundary points $x_0\in\partial\{u=\varphi\}$:\vspace{1mm}
\begin{itemize}
\item[(i)] either \quad$u(x)-\varphi(x)=c\,d^{1+s}(x)+o(|x-x_0|^{1+s+\alpha})$ \quad for some $c>0$,\vspace{1mm}
\item[(ii)] or \quad$u(x)-\varphi(x)=o(|x-x_0|^{1+s+\alpha})$,\vspace{1mm}
\end{itemize}
where $d$ is the distance to the contact set $\{u=\varphi\}$.
Moreover, we show that the set of free boundary points $x_0$ satisfying (i) is open, and that the free boundary is $C^{1,\gamma}$ and $u\in C^{1,s}$ near those points.

These results were only known for the fractional Laplacian \cite{CSS}, and are completely new for more general integro-differential operators.
The methods we develop here are purely nonlocal, and do not rely on any monotonicity-type formula for the operator.
Thanks to this, our techniques can be applied in the much more general context of fully nonlinear integro-differential operators: we establish similar regularity results for obstacle problems with convex operators.
\end{abstract}

\tableofcontents

\section{Introduction}

Obstacle problems for integro-differential operators appear naturally when considering optimal stopping problems for L\'evy processes with jumps.
Indeed, the value function $u(x)$ in this type of problems will solve $\min(-Lu,\,u-\varphi)=0$, where $\varphi$ is a certain payoff function and the operator $L$ is the infinitesimal generator of the process.
The equation can be posed either in a domain $\Omega\subset \R^n$ or in the whole space.
By the L\'evy-Khintchine formula, for any symmetric L\'evy process we have
\[Lu(x)=\sum_{i,j}a_{ij}\partial_{ij}u+\int_{\R^n}\left(\frac{u(x+y)+u(x-y)}{2}-u(x)\right)\nu(dy),\]
where $(a_{ij})$ is nonnegative definite, and $\nu$ satisfies $\int\min(1,|y|^2)\nu(dy)<\infty$.

An important motivation for studying this type of problems comes from mathematical finance, where they arise as pricing models for American options.
In this context, the function $u$ represents the rational price of a perpetual option, $\varphi$~is the payoff function, and the set $\{u=\varphi\}$ is the exercise region; see for example \cite{CT} for detailed description of the model.

When the matrix $(a_{ij})$ is uniformly elliptic, then the local term $a_{ij}\partial_{ij}u$ dominates.
In particular, if no jump part is present (i.e., $\nu\equiv0$) then after an affine change of variables we have $L=\Delta$, and the regularity of solutions and free boundaries is well understood.
However, when there is no diffusion part (i.e., $(a_{ij})=0$), then the problem is much less understood.

When $\nu(dy)=c|y|^{-n-2s} dy$ ---and $(a_{ij}) \equiv 0$--- then $L$ is a multiple of the fractional Laplacian $(-\Delta)^s$, and the obstacle problem was studied by Silvestre in \cite{S-obst} and by Caffarelli, Salsa and Silvestre in \cite{CSS}.
The main results of \cite{CSS,S-obst} establish that solutions $u$ are $C^{1,s}$, and that the free boundary is $C^{1,\alpha}$ at regular points (those at which $\sup_{B_r}(u-\varphi)\sim r^{1+s}$).
More recently, the singular set was studied in \cite{GP} for $s=\frac12$, and the complete structure of the free boundary was obtained in \cite{BFR} under a concavity assumption on the obstacle.

The proofs of all these results rely very strongly on certain particular properties of $(-\Delta)^s$.
Indeed, the obstacle problem for this (nonlocal) operator is equivalent to a thin obstacle problem in $\R^{n+1}$ for a \emph{local} operator, for which Almgren-type and other monotonicity formulas are available.
In \cite{S-obst}, the main results are established by using the semigroup property $(-\Delta)^{1-s}(-\Delta)^s=-\Delta$, thus getting a local operator.

For more general nonlocal operators $L$, for which these tools are not available, almost nothing was known about the regularity of solutions to obstacle problems, and nothing about the corresponding free boundaries.

The aim of this paper is to establish new regularity results for a general class of integro-differential operators of order $2s$, $s\in(0,1)$.
More precisely, we prove that
\begin{enumerate}
\item[(i)] solutions $u$ are $C^{1,s}$ near regular points, and   \vspace{1mm}
\item[(ii)] the free boundary is $C^{1,\alpha}$ near regular points,
\end{enumerate}
for all operators of the form
\begin{equation}\label{L*1}
Lu(x) = \int_{\R^n} \left(\frac{u(x+y)+u(x-y)}{2}-u(x)\right) \frac{\,\mu(y/|y|)}{|y|^{n+2s}}\,dy,
\end{equation}
with
\begin{equation} \label{L*2}
\mu\in L^\infty(S^{n-1}) \quad \mbox{satisfying} \quad \mu(\theta)=\mu(-\theta)\quad \mbox{and} \quad \lambda\le \mu\le \Lambda.
\end{equation}
We denote $\mathcal L_*$ the class of all linear elliptic operators \eqref{L*1}-\eqref{L*2}.
($\mathcal L_*$ consists of homogeneous translation invariant operators.)

\addtocontents{toc}{\protect\setcounter{tocdepth}{1}}  
\subsection{Main results}

Given $L\in \mathcal L_*$, we consider the obstacle problem in all of $\R^n$
\begin{equation}\label{obst-pb}\begin{split}
\min(-Lu,\,u-\varphi)&=0\quad\textrm{in}\ \, \R^n,\\
\lim_{|x|\rightarrow \infty}u(x)&=0.
\end{split}\end{equation}
The solution $u$ to \eqref{obst-pb} can be constructed as the smallest supersolution $u$ lying above the obstacle $\varphi$ and being nonnegative at infinity.

We assume that the obstacle satisfies
\begin{equation}\label{obstacle}
\varphi\textrm{ is bounded, }\, \varphi\in C^{2,1}(\R^n),\,\textrm{ and }\, \{\varphi>0\}\subset\subset\R^n.
\end{equation}
Our main result is the following.

\begin{thm}\label{th2}
Let $L$ be any operator of the form \eqref{L*1}-\eqref{L*2}, and $\alpha\in(0,s)$ be such that $1+s+\alpha<2$.

Let $\varphi$ be any obstacle satisfying \eqref{obstacle}, and $u$ be the solution to \eqref{obst-pb}.
Let $d(x)={\rm dist}(x,\{u=\varphi\})$.
Then, for every free boundary point $x_0\in \partial\{u=\varphi\}$ we have
\begin{itemize}
\item[(i)] either \vspace{-1mm}
\[u(x)-\varphi(x)=c\,d^{1+s}(x)+o(|x-x_0|^{1+s+\alpha}),\]
with $c>0$,\vspace{2mm}
\item[(ii)] or \hspace{30mm} $u(x)-\varphi(x)=o(|x-x_0|^{1+s+\alpha})$.
\end{itemize}
Moreover, the set of points $x_0$ satisfying (i) is an open subset of the free boundary and it is locally a $C^{1,\gamma}$ graph for all $\gamma\in(0,s)$.
Furthermore, for every $x_0$ satisfying (i) there is $r>0$ such that $u\in C^{1,s}(B_r(x_0))$.
\end{thm}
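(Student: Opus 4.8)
The plan is to run a blow-up scheme in which the role played by Almgren-type monotonicity formulas in the fractional-Laplacian case is replaced by three purely nonlocal ingredients: the semiconvexity of $u$, a Liouville-type classification of blow-ups, and a boundary Harnack principle for the class $\mathcal L_*$.

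\textbf{Preliminary regularity and blow-ups.} First I would establish that $u$ is Lipschitz, then $C^{1,\alpha}$ for some small $\alpha>0$, and finally \emph{semiconvex}, i.e.\ $\partial_{ee}(u-\varphi)\ge -C$ for every unit vector $e$; these follow from comparison arguments and the characterization of $u$ as the least supersolution above $\varphi$, using $\varphi\in C^{2,1}$ and the invariance of $\mathcal L_*$ under rotations and translations. In parallel one proves the two one-sided bounds at a free boundary point $x_0$: the growth control $\sup_{B_r(x_0)}(u-\varphi)\le Cr^{1+s}$, and, where a nondegeneracy condition holds, the matching lower bound $\sup_{B_r(x_0)}(u-\varphi)\ge c\,r^{1+s}$. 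Then, at $x_0\in\partial\{u=\varphi\}$, consider the rescalings
\[
v_r(x)=\frac{(u-\varphi)(x_0+rx)}{\|u-\varphi\|_{L^\infty(B_r(x_0))}},
\]
and, using the $C^{1,\alpha}$ bound, semiconvexity, and the growth control, extract a subsequential limit $v_0$ as $r\to 0$: a global, nonnegative, convex solution of the obstacle problem with zero obstacle, of subquadratic growth, with $0\in\partial\{v_0=0\}$.

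\textbf{Classification of blow-ups.} The core of the argument is the Liouville-type statement that, after possibly one further blow-up, every such $v_0$ is a half-space solution $v_0(x)=c\,(x\cdot e)_+^{1+s}$ with $c>0$, or else $v_0\equiv 0$. Here I would use that by convexity the contact set $K:=\{v_0=0\}$ is a closed convex cone; that each directional derivative $\partial_e v_0$ is a nonnegative $L$-harmonic function in $\R^n\setminus K$ vanishing on $K$; and a dimension reduction on the dimension of the largest linear subspace contained in $K$, combined with a Liouville theorem for $L$-harmonic functions of subquadratic growth vanishing on sets that are low-dimensional (or of vanishing capacity for $L$), to force $K$ to be a half-space.

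\textbf{Dichotomy, openness, and regularity of the free boundary and of $u$.} From the classification, at each free boundary point either some blow-up is a half-space solution or every blow-up vanishes. In the first case one upgrades the blow-up convergence to the quantitative expansion (i) with remainder $o(|x-x_0|^{1+s+\alpha})$, using the $C^{1,\alpha}$ bound on $u-\varphi$ together with a rate of convergence of $v_r$ coming from uniqueness of the blow-up; in the second case the growth control gives (ii) directly. Openness of the regular set follows by stability: if $x_0$ is regular then $v_r$ is uniformly close to a half-space solution for small $r$, and this is preserved at nearby free boundary points with a uniform modulus. Near a regular point, closeness to a half-space solution yields $\partial_e u\ge 0$ for all $e$ in a small cone about the blow-up direction, so $\partial\{u=\varphi\}$ is a Lipschitz graph and $\{u>\varphi\}$ is a Lipschitz domain; applying the boundary Harnack principle for $\mathcal L_*$ to the quotient $\partial_e u/\partial_n u$ in this domain (both $L$-harmonic there up to a controlled right-hand side, both vanishing on the free boundary) shows the normal is $C^\gamma$, hence the free boundary is $C^{1,\gamma}$ for all $\gamma\in(0,s)$. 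Finally, on the $C^{1,\gamma}$ domain $\{u>\varphi\}\cap B_r(x_0)$, $w:=u-\varphi$ solves $Lw=-L\varphi\in L^\infty$ with $w=0$ on the boundary and $w\sim d^{1+s}$, and the fine boundary regularity theory for $L$ in $C^{1,\gamma}$ domains gives $w\in C^{1,s}(B_r(x_0))$, i.e.\ $u\in C^{1,s}(B_r(x_0))$.

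\textbf{Main obstacle.} The delicate point is the classification of blow-ups without any monotonicity formula: proving that the contact cone $K$ must be a half-space, via the interplay of semiconvexity, the sign and $L$-harmonicity of $\partial_e v_0$ off $K$, the dimension reduction on $K$, and the Liouville theorem for subquadratic $L$-harmonic functions vanishing on thin sets. A secondary technical difficulty is establishing the boundary Harnack principle for the whole class $\mathcal L_*$ in Lipschitz and $C^{1,\gamma}$ domains in the form needed above, namely for quotients of solutions of $Lu=f$ with $f\in L^\infty$ rather than of exactly $L$-harmonic functions.
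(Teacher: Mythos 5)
Your outline reproduces the global strategy sketched in the paper's introduction, but two of its load-bearing steps are asserted rather than available, and the first one would fail as stated. You take the a priori optimal growth $\sup_{B_r(x_0)}(u-\varphi)\le Cr^{1+s}$ (plus a matching nondegeneracy) as something provable ``in parallel'' with the Lipschitz/semiconvexity estimates, and you use it to compactify the naive rescalings $v_r=(u-\varphi)(x_0+r\,\cdot)/\|u-\varphi\|_{L^\infty(B_r(x_0))}$ along arbitrary $r\to0$. Without a monotonicity formula this growth bound is not obtainable by comparison arguments: it is essentially the optimal $C^{1,s}$ regularity, which in this paper is a \emph{conclusion}, proved only near regular points and only at the very end. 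The paper's workaround is precisely the point you skip: regular points are defined through $\sup_{r'\ge r}(r')^{-(1+s+\alpha)}\sup_{B_{r'}(x_0)}u$, and the blow-up is performed only along selected radii $r_k'$ at which the nonincreasing quantity $\theta(r)=\sup_{r'\ge r}(r')^{-s-\alpha}\|\nabla u\|_{L^\infty(B_{r'}(x_0))}$ is essentially attained (Lemma \ref{lema-theta} and Proposition \ref{proprescalings}); it is this selection that gives the uniform growth $\|\nabla v\|_{L^\infty(B_R)}\le R^{s+\alpha}$ for all $R\ge1$ together with nondegeneracy at scale one, which the blow-up along arbitrary radii does not have. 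Accordingly the whole dichotomy is run with the exponent $1+s+\alpha<\min(2,1+2s)$, not with $1+s$.

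Three further steps are named but not supplied, and each needs a specific tool. First, the classification of blow-ups, which you correctly flag as the main obstacle, is the heart of the matter: the paper handles the thin-cone case by showing all increments $u-u(\cdot-h)$ are globally $L$-harmonic (a wedge/supersolution argument, Lemma \ref{supersol-fully}) so the limit is a paraboloid, excluded by subquadratic growth; in the full-interior case it first proves the cone is $C^1$ away from the vertex by a dimension-reduction blow-up at lateral points (using Lemma \ref{convex-1D}), and only then forces 1D symmetry via the Phragmen--Lindel\"of/uniqueness theorem (Theorem \ref{thm-uniqueness}) for nonnegative $L$-harmonic functions vanishing on a $C^1$ cone, applied to the directional derivatives $\partial_{e_i}v_0$ with $-e_i$ in the cone. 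Your proposed ``Liouville theorem for functions vanishing on low-dimensional sets'' covers only the first case and cannot by itself show a full-interior convex cone is a half-space. Second, openness of the regular set does not follow from one-scale ``stability'': regularity is an all-scales condition, and closeness to a half-space solution at a single scale does not prevent degeneracy at much smaller scales at nearby points. The paper needs the explicit homogeneous subsolution of homogeneity $s+\epsilon$ supported in a cone close to a half-space (Lemma \ref{homog-subsol}), used as a lower barrier at each nearby free boundary point to propagate nondegeneracy with a uniform modulus (Proposition \ref{contagi}). Third, the expansion in (i) is not derived from ``uniqueness of the blow-up with a rate'' (no such uniqueness or rate is established anywhere); it follows, after the free boundary is shown to be $C^{1,\gamma}$ via the boundary Harnack quotient estimate, from the $C^\gamma$ bound on $\partial_i u/d^s$ in $C^{1,\gamma}$ domains (Theorem \ref{thmC1alpha}). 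Your final quotient argument for the normal vector and the $C^{1,s}$ estimate in $C^{1,\gamma}$ domains do match the paper.
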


As explained above, these results were only known for the fractional Laplacian; see \cite{CSS}.
In that case, one can transform the problem into a thin obstacle problem for a local operator and use Almgren-type monotonicity formulas.
This is not possible for more general nonlocal operators, and new techniques had to be developed.

The proofs we present here are purely nonlocal and are independent from the ones in \cite{CSS}.
Moreover, we do not use any particular monotonicity-type formulas for the operators.
Thanks to this, our techniques can be applied in the much more general setting of fully nonlinear integro-differential equations, as explained next.

\subsection{Fully nonlinear equations}

We also establish similar regularity results for convex fully nonlinear operators
\begin{equation}\label{I}
\I u=\sup_{a\in\mathcal A} \bigl(L_au+c_a\bigr),
\end{equation}
with $L_a\in\mathcal L_*$ for all $a\in\mathcal A$.
For simplicity, we assume $I0=0$.

Given such an operator $\I$, we consider the obstacle problem
\begin{equation}\label{obst-pb-fully}\begin{split}
\min(-\I u,\,u-\varphi)&=0\quad\textrm{in}\ \, \R^n,\\
\lim_{|x|\rightarrow \infty}u(x)&=0,
\end{split}\end{equation}
for an obstacle $\varphi$ satisfying \eqref{obstacle}.

In the next result, and throughout the paper, we use the following.

\begin{defi}\label{defi-baralpha}
We denote $\bar\alpha=\bar\alpha(n,s,\lambda,\Lambda)>0$ the minimum of the three following constants:
\begin{itemize}
\item The $\alpha>0$ of the interior $C^{\alpha}$ estimate for nonlocal equations ``with bounded measurable coeffiecients'' given by \cite[Theorem 11.1]{CS};
\item The $\alpha>0$ of the boundary $C^{\alpha}$ estimate for $u/d^s$ for the same equations given by \cite[Proposition 1.1]{RS-K};
\item The $\alpha>0$ in the interior  $C^{2s+\alpha}$ estimate for convex equations 
given by \cite[Theorem 1.1]{CS2} and \cite[Theorem 1.1]{S-convex}.
\end{itemize}
It is important to recall that, given $s_0\in(0,1)$ and $s\in(s_0,1)$, the constant $\bar\alpha>0$ depends on $s_0$, but not on~$s$.
In other words, $\bar\alpha$ stays positive as $s\uparrow1$.
\end{defi}

We establish the following.

\begin{thm}[Fully nonlinear operators]\label{th3}
Let $\I$ be any operator of the form \eqref{I}, with $L_a$ satisfying \eqref{L*1}-\eqref{L*2} for all $a\in\mathcal A$.
Let $\bar\alpha>0$ be given by Definition \ref{defi-baralpha}, let $\gamma\in(0,\bar\alpha)$, and let $\alpha\in(0,\bar\alpha)$ be such that $1+s+\alpha<2$.

Let $\varphi$ be any obstacle satisfying \eqref{obstacle}, $u$ be the solution to \eqref{obst-pb-fully}, and $d(x)={\rm dist}(x,\{u=\varphi\})$.
Then, at any free boundary point $x_0\in \partial\{u=\varphi\}$ we have
\begin{itemize}
\item[(i)] either\vspace{-2mm}
\[\qquad \qquad u(x)-\varphi(x)=c\,d^{1+s}(x)+o(|x-x_0|^{1+s+\alpha}),\qquad c>0,\]
\item[(ii)] or
\[\qquad\liminf_{r\downarrow0}\frac{\bigl|\{u=\varphi\}\cap B_r(x_0)\bigr|}{|B_r(x_0)|}=0 \quad \textrm{and}\quad u(x)-\varphi(x) =o(|x-x_0|^{\min(2s+\gamma,1+s+\alpha)}),\]\vspace{1mm}
\item[(iii)] or \hspace{30mm} $u(x)-\varphi(x) =o(|x-x_0|^{1+s+\alpha})$.
\end{itemize}
Moreover, the set of points $x_0$ satisfying (i) is an open subset of the free boundary and it is $C^{1,\gamma}$ for all $\gamma\in(0,\bar\alpha)$.
\end{thm}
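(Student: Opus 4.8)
The plan is as follows; the full details occupy the rest of the paper.

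\emph{Step 1 (preliminary regularity and rescalings).} After a translation we take the free boundary point to be the origin and set $v:=u-\varphi\ge 0$. Since $\I$ is convex with $\I 0=0$ we may assume $c_a\le 0$ for all $a$, and then elementary pointwise inequalities (comparing with the extremal operators $\mathcal M^\pm_{\mathcal L_*}$ of the class $\mathcal L_*$, and using that $\mathcal M^+_{\mathcal L_*}\varphi$ is bounded because $\varphi\in C^{2,1}$ and $s<1$) show that $u$ is globally a supersolution of $\mathcal M^-_{\mathcal L_*}u=0$ and a subsolution of $\mathcal M^+_{\mathcal L_*}u=-C_0$, while $\I u=0$ in $\{u>\varphi\}$. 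The interior estimate of Definition~\ref{defi-baralpha} then gives $u\in C^{\bar\alpha}_{\rm loc}$, from $\varphi\in C^{2,1}$ one gets the semiconvexity $\partial_{ee}u\ge -C$, and the interior $C^{2s+\gamma}$ estimate for convex equations gives $u\in C^{2s+\gamma}_{\rm loc}(\{u>\varphi\})$. One then studies rescalings $v_r(x):=v(rx)/\|v\|_{L^\infty(B_r)}$; because $s<1$, the constants $c_a$ and the contributions of $\varphi$ drop out of every blow-up limit, so any subsequential limit $v_0=\lim_{r_k\downarrow0}v_{r_k}$ is a nonnegative, nontrivial, global solution of a homogeneous obstacle problem $\min(-\I_0 v_0,\,v_0)=0$ in $\R^n$ with $v_0(0)=0$, where $\I_0=\sup_{a\in\mathcal{A}_0}L_a$ is again a convex homogeneous elliptic operator built from $\mathcal L_*$.

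\emph{Step 2 (classification of blow-ups and the trichotomy).} This is the heart of the matter, and the place where monotonicity formulas have to be replaced by a direct argument. One must show that every such global solution $v_0$ is of one of three types: (a) a half-space solution $v_0(x)=c\,(x\cdot e)_+^{1+s}$ with $c>0$ and $e\in S^{n-1}$; (b) a solution whose contact set has zero density at $0$ and which grows at most like $|x|^{\min(2s+\gamma,\,1+s+\alpha)}$; or (c) a solution with density-zero contact set that grows strictly faster than $|x|^{1+s+\alpha}$. The tools are: the convex interior $C^{2s+\gamma}$ estimate applied on $\{v_0>0\}$, which — combined with $\I_0 v_0\le 0$ globally and a further blow-up of $v_0$ — forces either a density-zero contact set or a \emph{linear} limiting operator near the free boundary; and the boundary estimate for $v_0/d^s$ of Definition~\ref{defi-baralpha}, which is the boundary Harnack principle needed to recognize that a nondegenerate contact set has a tangent half-space and that the profile $c(x\cdot e)_+^{1+s}$ emerges. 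Transferring the three alternatives back to $u$, together with the growth bound that this classification yields, produces exactly (i)–(iii) of the statement; the exponent $\min(2s+\gamma,1+s+\alpha)$ in (ii) records the competition between the convex interior regularity and the target order $1+s+\alpha$.

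\emph{Step 3 (openness and $C^{1,\gamma}$ regularity of the regular set).} At a point of type (i) the blow-up $c(x\cdot e)_+^{1+s}$, $c>0$, gives the nondegeneracy $\sup_{B_r(x_0)}v\ge c_0 r^{1+s}$; since blow-ups depend stably on the base point in the $C^{\bar\alpha}$ topology and the half-space solutions form an isolated class, every nearby free boundary point is again of type (i), so the set of such points is relatively open. For the $C^{1,\gamma}$ regularity we again bypass monotonicity: for unit vectors $e$ in a small cone around the blow-up direction $e_n$, the function $w_e:=\partial_e v$ is, near $x_0$, nonnegative, vanishes on $\{u=\varphi\}$, and solves a linear equation ``with bounded measurable coefficients'' in $\{u>\varphi\}$; the boundary estimate for $u/d^s$ of Definition~\ref{defi-baralpha} then shows that the quotients $w_e/w_{e_n}$ extend $C^{\bar\alpha}$ up to the free boundary. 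Since the unit normal to the free boundary is, after normalization, a vector whose components are such quotients, it is $C^\gamma$ for every $\gamma<\bar\alpha$, so the free boundary is locally a $C^{1,\gamma}$ graph; this completes the proof of Theorem~\ref{th3}. (In the linear setting of Theorem~\ref{th2} one can then solve $Lu=0$ in the resulting $C^{1,\gamma}$ domain with Cauchy data $u=\varphi$, $\nabla u=\nabla\varphi$ on the free boundary and invoke boundary regularity to upgrade $u$ to $C^{1,s}$ near regular points.)

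\emph{Expected main obstacle.} Step 2 carries essentially all the difficulty. For the fractional Laplacian the classification of blow-ups and the exact rate $1+s$ follow from Almgren-type and related monotonicity formulas on the Caffarelli--Silvestre extension, and no such formula is available for general $L\in\mathcal L_*$ or for $\I$. Replacing it requires a self-improving iteration that simultaneously controls the growth of $v_0$ and the flatness of the free boundary, combining the convexity of $\I$, the interior $C^{\bar\alpha}$ estimate, and the fine boundary estimate for $u/d^s$; closing this iteration is the technical core of the argument.
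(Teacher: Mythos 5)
Your outline identifies several of the right external tools (blow-ups, a Liouville-type classification, boundary Harnack, quotients of directional derivatives), but as written it has genuine gaps at precisely the points where the paper has to work hardest. The central one is Step 2: you state the trichotomy of blow-up limits as the goal and list plausible ingredients (``a further blow-up forces either a density-zero contact set or a linear limiting operator,'' ``a self-improving iteration''), but you do not give the argument, and you acknowledge this yourself. The paper's actual replacement for monotonicity formulas is the \emph{convexity} of blow-ups: the semiconvexity $\partial_{ee}u\ge -C$ of the solution becomes $D^2 v_0\ge 0$ after rescaling, and the classification is then a classification of global \emph{convex} solutions with the growth control $\|\nabla v_0\|_{L^\infty(B_R)}\le R^{s+\alpha}$. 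It proceeds by blow-down to a convex cone $\Sigma=\{v_0=0\}$, a dimension-reduction argument showing $\partial\Sigma$ is $C^1$ away from the vertex, a Phragmen--Lindel\"of/uniqueness theorem for nonnegative solutions vanishing on a $C^1$ cone (built on the boundary Harnack of \cite{RS-C1} for bounded measurable coefficients, which is what makes the fully nonlinear case work --- not the $u/d^s$ estimate you cite from Definition \ref{defi-baralpha}), and, for cones contained in a strip, the Liouville theorem for convex equations proved in the Appendix; the exponent $\min(2s+\gamma,1+s+\alpha)$ in (ii) comes precisely from this last case, via the argument that a regular point with exponent $\alpha'$ satisfying $1+s+\alpha'<2s+\bar\alpha$ forces a $C^1$ free boundary and hence contact density $\tfrac12$, contradicting the density-zero hypothesis. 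None of this structure appears in your Step 2, so the trichotomy is not established.

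Two further steps are asserted rather than proved. First, your blow-up $v_r=v(rx)/\|v\|_{L^\infty(B_r)}$ does not come with the uniform growth control at infinity that a nonlocal operator requires to pass to the limit and to guarantee a nontrivial limit; the paper selects scales $r'_m$ through the monotone quantity $\theta(r)=\sup_{r'\ge r}(r')^{-s-\alpha}\|\nabla u\|_{L^\infty(B_{r'})}$ exactly to obtain both $\|\nabla v_m\|_{L^\infty(B_R)}\le R^{s+\alpha}$ for all $R\ge1$ and the nondegeneracy $\|\nabla v_m\|_{L^\infty(B_1)}\ge\tfrac12$. Second, openness of the regular set does not follow from ``blow-ups depend stably on the base point and half-space solutions form an isolated class'': a priori nearby free boundary points could be degenerate. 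The paper proves openness (with a \emph{uniform} modulus, which is also what you need before invoking boundary Harnack in a $C^1$ domain for your quotient argument in Step 3) by a barrier: the explicit homogeneous subsolution $\Phi$ of homogeneity $s+\epsilon$ from \cite{RS-C1}, translated to nearby free boundary points, gives $\partial_e u(x_0+te)\ge ct^{s+\epsilon}$ and hence regularity of all nearby points with a common modulus. Likewise, the nonnegativity of $\partial_e v$ in a cone of directions near $x_0$, which you take for granted in Step 3, is itself the Lipschitz-monotonicity proposition and requires the quantitative flatness coming from the blow-up together with a maximum-principle lemma. Until these steps --- above all the convexity-based classification of Step 2 --- are supplied, the proposal is a program rather than a proof.
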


Recall that the interior regularity for solutions to convex fully nonlinear equations is $C^{2s+\gamma}$; see \cite{CS3,S-convex}.
This is why for fully nonlinear operators we may have free boundary points satisfying (ii) above, in contrast with the case of linear operators (Theorem \ref{th2}).

Still, it is important to notice that when $s$ is close to 1 then we get the exact same result as in the linear case of Theorem \ref{th2}.
Indeed, in that case we have $2s+\bar\alpha\geq2$, thus we may take $\gamma$ such that $2s+\gamma\geq 1+s+\alpha$, and therefore all points (ii) satisfy (iii).

\begin{rem}[The class of kernels]
Notice that the $C^{1,s}$ regularity of solutions is very related to the class $\mathcal L_*$, and would not be true for more general classes of nonlocal operators.
Indeed, we studied in \cite{RS-K} the boundary regularity of solutions and showed that, while for the class $\mathcal L_*$ all solutions are $C^s$ up to the boundary, this is not true for fully nonlinear operators with more general kernels; see \cite[Section~2]{RS-K}.
This is why most of the results of the present paper are for the class $\mathcal L_*$.

Still, our techniques can be adapted to wider classes of kernels, such as the class $\mathcal L_0$ of \cite{CS}.
As explained above, in that case one does not expect solutions to be $C^{1,s}$, but a modification of our methods can be used to prove the $C^{1,\gamma}$ regularity of free boundaries in that case too.
We plan to do this in a future work.
\end{rem}

\subsection{Global strategy of the proof}

Let us briefly explain the global strategy of the proof of Theorem \ref{th2}.

We start with a free boundary point $x_0\in\partial\{u=\varphi\}$, and we assume that (b) in Theorem \ref{th2} does \emph{not} hold.
Then, the idea is to take a blow-up sequence of the type $v_r(x)=(u-\varphi)(x_0+rx)/\|u-\varphi\|_{B_r(x_0)}$.
However, we need to do it along an appropriate subsequence $r_k\to0$ so that the rescaled functions $v_{r_k}$ (and their gradients) have a ``good'' growth at infinity (uniform in $k$).
Once we do this, in the limit $r_k\to0$ we get a global solution $v_0$ to the obstacle problem, which is convex and has the following growth at infinity
\begin{equation}\label{growth-intro}
|\nabla v_0(x)|\leq C(1+|x|^{s+\alpha}),
\end{equation}
with $s+\alpha<\min\{1,2s\}$.
Such growth condition is very important in order to take limits $r_k\to0$ and to show that $v_0$ solves the obstacle problem.

The next step is to classify global \emph{convex} solutions $v_0$ to the obstacle problem with such growth.
We need to prove that the convex set $\Omega=\{v_0=0\}$ is a half-space.
For this, the first idea is to do a blow-down argument to get a new solution $\tilde v_0$, with the same growth \eqref{growth-intro}, and for which the set $\{\tilde v_0=0\}$ is a convex \emph{cone}~$\Sigma$.
Then, we separate into two cases, depending on the ``size'' of $\Sigma$.
If $\Sigma$ has zero measure, we show that $\tilde v_0$ would be a paraboloid in $\R^n$, which is a contradiction with the growth \eqref{growth-intro}.
On the other hand, if $\Sigma$ has nonempty interior, we first prove by a dimension reduction argument that $\Sigma$ is $C^1$ outside the origin.
Then, by convexity of $\tilde v_0$ we have a cone of directional derivatives satisfying $\partial_e \tilde v_0\geq0$ in $\R^n$.
Then, using a boundary Harnack estimate in $C^1$ domains \cite{RS-C1}, we prove that all such derivatives have to be equal (up to multiplicative constant) in $\R^n$, and thus that $\Sigma$ must be a half-space.
This implies that $\Omega$ was itself a half-space, and therefore $v_0$ is a 1D solution.

Once we have the classification of blow-ups, we show that the free boundary is Lipschitz in a neighborhood of $x_0$, and $C^1$ at that point.
This is done by adapting standard techniques from local obstacle problems to the present context of nonlocal operators.
Finally, by an appropriate barrier argument we show that the regular set is open, i.e., that all points in a neighborhood of $x_0$ do \emph{not} satisfy (b).
From here, we deduce that the free boundary is $C^1$ at every point in a neighborhood of $x_0$, and we show that this happens with a uniform modulus of continuity around $x_0$.
Finally, using again the results of \cite{RS-C1}, we deduce that the free boundary is $C^{1,\gamma}$ near $x_0$, and that $(u-\varphi)(x)=c_0d^{1+s}(x)+o(|x-x_0|^{1+s+\alpha})$ for some $c_0>0$.

\subsection{Plan of the paper}

The paper is organized as follows:
In Section \ref{sec2} we prove a $C^{1,\tau}$ estimate for solutions to the obstacle problem.
In Section \ref{sec3} we establish a uniqueness result for nonnegative solutions to linear equations in cones.
In Section \ref{sec4} we classify global convex solutions to the obstacle problem.
In Section \ref{sec5} we start the study of the free boundary at regular points.
Then, in Section \ref{sec6} we prove Theorem~\ref{th2}.
Finally, in Sections \ref{sec-fully-1} and \ref{sec-fully-2} we study the obstacle problem for fully nonlinear operators, and establish Theorem~\ref{th3}.

\section{$C^{1,\tau}$ regularity of solutions}
\label{sec2}

In this Section we provide some preliminary results and establish the $C^{1,\tau}$ regularity of solutions to the obstacle problem \eqref{obst-pb}-\eqref{obstacle}.
As we will see, the results of this section apply to more general operators of the form
\begin{equation}\label{L01}
Lu(x) = \int_{\R^n} \left(\frac{u(x+y)+u(x-y)}{2}-u(x)\right) \frac{\,b(y)}{|y|^{n+2s}}\,dy,
\end{equation}
with
\begin{equation} \label{L02}
b\in L^\infty(\R^n) \quad \mbox{satisfying} \quad b(y)=b(-y)\quad \mbox{and} \quad \lambda\le b\le \Lambda.
\end{equation}
This is the class $\mathcal L_0$ of \cite{CS}.
Thus, the kernels are not assumed to be homogeneous.

First, we show the following.

\begin{lem}[Semiconvexity]\label{semiconvex}
Let $L$ be any operator of the form \eqref{L01}-\eqref{L02}, $\varphi$ be any obstacle satisfying \eqref{obstacle}, and $u$ be the solution to \eqref{obst-pb}.
Then,
\begin{itemize}
\item[(a)] $u$ is semiconvex, with \[\hspace{40mm}\partial_{ee}u\ge -\|\varphi\|_{C^{1,1}(\R^n)}\qquad \textrm{for all}\ e\in S^{n-1}.\]
\item[(b)] $u$ is bounded, with \[\|u\|_{L^\infty(\R^n)}\leq \|\varphi\|_{L^\infty(\R^n)}.\]
\item[(c)] $u$ is Lipschitz, with \[\|u\|_{\rm Lip(\R^n)}\leq \|\varphi\|_{C^{0,1}(\R^n)}.\]
\item[(d)] $Lu$ is bounded, with \[\|Lu\|_{L^\infty(\R^n)}\leq C.\]
\end{itemize}
The constant $C$ depends only on $\|\varphi\|_{C^{1,1}(\R^n)}$ and ellipticity constants.
\end{lem}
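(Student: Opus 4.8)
The plan is to prove the four properties in the natural order $(b)\to(c)\to(a)\to(d)$, exploiting the characterization of $u$ as the least supersolution lying above $\varphi$ and nonnegative at infinity, and using the translation invariance of $L$ together with comparison.

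First I would establish $(b)$. The function $w\equiv\|\varphi\|_{L^\infty(\R^n)}$ is a (super)solution of $-Lw\ge0$ (indeed $Lw=0$ since $L$ annihilates constants), it lies above $\varphi$, and it is nonnegative at infinity; hence by minimality $u\le\|\varphi\|_{L^\infty}$. The lower bound $u\ge0\ge-\|\varphi\|_{L^\infty}$ follows because the contact set forces $u\ge\varphi$ and, away from $\{\varphi>0\}\subset\subset\R^n$, one checks $u\ge0$ by comparison (the zero function is a subsolution below any supersolution that is nonnegative at infinity). This gives $\|u\|_{L^\infty}\le\|\varphi\|_{L^\infty}$.

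Next, for the Lipschitz bound $(c)$ and the semiconvexity $(a)$, I would use the standard translation/averaging trick, which works here precisely because $L$ is translation invariant and the problem is set in all of $\R^n$. Fix $e\in S^{n-1}$ and $h>0$. For $(c)$: the function $x\mapsto u(x+he)+\|\varphi\|_{C^{0,1}}\,h$ is again a supersolution (translates of supersolutions are supersolutions), it is nonnegative at infinity, and it lies above $\varphi$ since $\varphi(x+he)+\|\varphi\|_{C^{0,1}}h\ge\varphi(x)$; by minimality it dominates $u$, giving $u(x)-u(x+he)\le\|\varphi\|_{C^{0,1}}h$, and symmetrizing in $\pm e$ yields $\|u\|_{\mathrm{Lip}}\le\|\varphi\|_{C^{0,1}}$. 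For $(a)$: consider $\tilde u(x)=\tfrac12\bigl(u(x+he)+u(x-he)\bigr)+\tfrac12\|\varphi\|_{C^{1,1}}h^2$. Since $L$ is linear and translation invariant, $\tilde u$ is a supersolution; it is nonnegative at infinity; and the $C^{1,1}$ bound on $\varphi$ gives $\tfrac12(\varphi(x+he)+\varphi(x-he))\ge\varphi(x)-\tfrac12\|\varphi\|_{C^{1,1}}h^2$, so $\tilde u\ge\varphi$. Minimality then gives $\tilde u\ge u$, i.e. $u(x+he)+u(x-he)-2u(x)\ge-\|\varphi\|_{C^{1,1}}h^2$, which passing to the limit (using that $u$ is already known to be Lipschitz, or simply reading off the discrete second difference bound) is exactly $\partial_{ee}u\ge-\|\varphi\|_{C^{1,1}}$ in the viscosity/distributional sense.

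Finally, for $(d)$ I would argue pointwise. On the contact set $\{u=\varphi\}$ one has $Lu\le0$ from the equation, and also $Lu\ge L\varphi$ pointwise at contact points (since $u-\varphi\ge0$ with equality there, the nonlocal contribution is nonnegative), so $|Lu|\le\max(|L\varphi|,0)\le C\|\varphi\|_{C^{1,1}}$ there, using that $\varphi\in C^{2,1}$ (hence certainly $C^{1,1}$) with compact positivity set makes $L\varphi$ bounded for an operator of order $2s<2$. On the noncoincidence set $\{u>\varphi\}$ we have $Lu=0$. The only subtlety is that the kernel has order $2s$ and for $s$ close to $1$ the pointwise evaluation of $Lu$ requires a $C^{1,1}$-type control near the point; this is supplied by the semiconvexity from $(a)$ together with the Lipschitz bound $(c)$ — a semiconvex Lipschitz function has $Lu$ well-defined and bounded by $C(\|u\|_{\mathrm{Lip}}+\|D^2u^-\|_{L^\infty})$ via the usual splitting of the integral into $|y|<1$ and $|y|\ge1$, controlling the near-diagonal part by the one-sided second-difference bound. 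I expect this last point — making the evaluation of $Lu$ rigorous near the free boundary uniformly in $s\uparrow1$ — to be the only real technical obstacle; everywhere else the comparison/minimality arguments are completely soft and rely solely on translation invariance and linearity of $L$ plus $\varphi\in C^{1,1}$ with compact support.
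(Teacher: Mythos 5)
Your proposal is correct and follows essentially the same route as the paper: parts (a)--(c) are obtained exactly as in the text, by comparing $u$ with the averaged/translated competitors $\tfrac12\bigl(u(\cdot+h)+u(\cdot-h)\bigr)+C|h|^2$, $u(\cdot+h)+C|h|$ and the constant $\|\varphi\|_{L^\infty(\R^n)}$ through the least-supersolution characterization. For (d) the paper argues even more simply---$-Lu\ge 0$ holds globally by the very definition of $u$ as a supersolution, while semiconvexity together with the $L^\infty$ bound gives $Lu\ge -C$ by splitting the kernel integral at $|y|=1$---so no case split between contact and non-contact sets, and none of the worries about pointwise evaluation of $Lu$ near the free boundary, are actually needed (note also that your claimed near-diagonal control by $\|u\|_{\mathrm{Lip}}$ only bounds the negative part of the integrand; the upper bound on $Lu$ must come from the supersolution property, as in the paper).
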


\begin{proof}
The proofs are essentially the same as the ones in \cite{S-obst}.

(a) First, by definition $u$ is the least supersolution which is above the obstacle $\varphi$ and is nonnegative at infinity.
Namely, if $v$ satisfies $-Lv\geq0$ in $\R^n$, $v\geq\varphi$ in $\R^n$, and $\liminf_{|x|\rightarrow \infty}v(x)\geq0$, then $v\geq u$.

Thus, for any given $h\in \R^n$ we may take
\[v(x)=\frac{u(x+h)+u(x-h)}{2}+C|h|^2.\]
This function clearly satisfies $-Lv\geq0$ in $\R^n$, and also $v\geq\varphi$ in $\R^n$ for $C=\|\varphi\|_{C^{1,1}(\R^n)}$.
Hence, we have $v\geq u$ in $\R^n$, and therefore
\[\frac{u(x+h)+u(x-h)-2u(x)}{|h|^2}\geq -C.\]
Since $C$ is independent of $h\in B_1$, we get $\partial_{ee}u\geq -C$ for all $e\in S^{n-1}$.

(b) A similar argument with the constant function $v(x)=\|\varphi\|_{L^\infty(\R^n)}$ leads to the bound $\|u\|_{L^\infty(\R^n)}\leq \|\varphi\|_{L^\infty(\R^n)}$.

(c) Taking now the function $v(x)=u(x+h)+C|h|$, we find the estimate $\|u\|_{\rm Lip(\R^n)}\leq \|\varphi\|_{\rm Lip(\R^n)}$.

(d) By definition of $u$, we have $-Lu\geq0$ in $\R^n$.
On the other hand, by the semiconvexity of $u$ and since $u\in L^\infty$ we have $Lu\geq -C$.
Hence, the $L^\infty$ bound for $Lu$ follows.
\end{proof}

The next Proposition can be applied to $u-\varphi$, where $u$ is the solution to \eqref{obstacle}.

If implies that all solutions of the obstacle problem implies that all  are $C^{1,\tau}$ for some $\tau>0$ (even with $L$ in the ellipticity class $\mathcal L_0$).

\begin{prop}\label{first-regularity-u}
Let $L$ be any operator of the form \eqref{L01}-\eqref{L02}, and $u\in{\rm Lip}(\R^n)$ be any function satisfying, for all $h\in\R^n$ and $e\in S^{n-1}$,
\[\begin{array}{rcll}
u&\geq&0\quad &\textrm{in}\ \R^n \\
\partial_{ee}u&\geq& -K\quad &\textrm{in}\ B_2 \\
L(u-u(\cdot-h))&\geq& -K|h|\quad &\textrm{in}\ \{u>0\}\cap B_1\\
|\nabla u|&\leq& K(1+|x|^{s+\alpha}) \quad &\textrm{in}\ \R^n.
\end{array}\]
Then, there exists a small constant $\tau>0$ such that
\[\|u\|_{C^{1,\tau}(B_{1/2})}\leq CK.\]
The constants $\tau$ and $C$ depend only on $n$, $s$, $\alpha$, $\lambda$, and $\Lambda$.
\end{prop}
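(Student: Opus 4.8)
The plan is to reduce the $C^{1,\tau}$ bound to a growth estimate for $u$ at free boundary points, and to prove that growth estimate by a compactness–contradiction argument, keeping $\tau$ small throughout. After normalizing $K=1$ (all hypotheses and the conclusion are $1$-homogeneous in $u$), the first observation is that for every $e\in S^{n-1}$ the function $v:=\partial_e u$ is well defined ($u$ is Lipschitz) and satisfies $|Lv|\le 1$ in $\{u>0\}\cap B_1$: take $h=\pm te$ and let $t\downarrow 0$ in the third hypothesis, the far tail of $Lv$ converging precisely because $\alpha<s$. One also has $\partial_e v=\partial_{ee}u\ge -1$ in $B_2$ and $\|v\|_{L^\infty(B_1)}\le 2$. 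Away from the contact set $v$ thus solves a uniformly elliptic nonlocal equation with bounded right-hand side, so the interior estimates for the class $\mathcal L_0$ give $u\in C^{1,\bar\alpha}$ locally in $\{u>0\}$, with bounds depending on $\mathrm{dist}(\cdot,\{u=0\})$. Hence everything rests on a \emph{uniform} control of $u$ near the free boundary.

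The key step is: there exist $\tau>0$ and $C_0$ such that $\sup_{B_r(x_0)}u\le C_0\,r^{1+\tau}$ for every free boundary point $x_0\in\overline{B_{1/2}}$ and every $r\in(0,1/4)$. I would argue by contradiction. If it fails there are $u_k$ (with $K=1$), free boundary points $x_k$ and radii $r_k\downarrow 0$ with $r_k^{-1-\tau}\sup_{B_{r_k}(x_k)}u_k\to\infty$; applying the ``last time a new maximum is attained'' selection to $r\mapsto r^{-1-\tau}\sup_{B_r(x_k)}u_k$ one extracts $\rho_k\downarrow 0$ so that the blow-ups $\bar u_k(x):=u_k(x_k+\rho_k x)/\|u_k\|_{L^\infty(B_{\rho_k}(x_k))}$ satisfy $\bar u_k\ge 0$, $\bar u_k(0)=0$, $\|\bar u_k\|_{L^\infty(B_1)}=1$ and, crucially, $\|\bar u_k\|_{L^\infty(B_R)}\le CR^{1+\tau}$ for $1\le R\le \tfrac14\rho_k^{-1}$. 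Since the contradiction hypothesis forces $\|u_k\|_{L^\infty(B_{\rho_k}(x_k))}\gg\rho_k^{2}$ (recall $1+\tau<2$), the rescaled semiconvexity bound $\partial_{ee}\bar u_k\ge -\rho_k^2/\|u_k\|_{L^\infty(B_{\rho_k}(x_k))}\to 0$, so any limit $\bar u_0$ is \emph{convex}; and the rescaled third hypothesis gives $|L_k\partial_e\bar u_k|\to 0$ in $\{\bar u_k>0\}$ (for the rescaled operator $L_k\in\mathcal L_0$) as long as $\tau<2s$. Convexity provides the pointwise bound $|\nabla\bar u_k(y)|\le\|\bar u_k\|_{L^\infty(B_{d(y)}(y))}/d(y)$ which, together with the polynomial growth and the boundary regularity of $\bar u_k$ along the (convex) contact set, keeps the nonlocal tails equi-controlled; this allows passing to the limit and obtaining that $\bar u_0$ is a nonzero, nonnegative, convex global function with $\bar u_0(0)=0$, $\|\bar u_0\|_{L^\infty(B_R)}\le CR^{1+\tau}$, and $L\partial_e\bar u_0=0$ in $\{\bar u_0>0\}$ for all $e\in S^{n-1}$.

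It then remains to rule out such a $\bar u_0$, which is the crux. The zero set $\Sigma_0=\{\bar u_0=0\}$ is convex and contains $0$. If $\Sigma_0$ has empty interior it lies in a hyperplane; then $\partial_e\bar u_0$ cannot jump across it (a jump would make $L\partial_e\bar u_0$ blow up near the hyperplane like $\mathrm{dist}^{-2s}$, contradicting the equation), so $\partial_e\bar u_0$ is in fact $L$-harmonic in all of $\R^n$ with sublinear growth, hence constant by a Liouville-type theorem; thus $\bar u_0$ is affine, and being $\ge 0$ with $\bar u_0(0)=0$ it is identically $0$, contradicting $\|\bar u_0\|_{L^\infty(B_1)}=1$. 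If $\Sigma_0$ has nonempty interior, one uses convexity to find a direction $e$ with $\partial_e\bar u_0\ge 0$ in $\R^n$, not identically zero, $L$-harmonic in $\{\bar u_0>0\}$ and vanishing on $\Sigma_0$; comparing it from below, via the maximum principle on a half-space contained in $\R^n\setminus\Sigma_0$, with an $L$-harmonic power $c\,(e\cdot x)_+^{\beta}$ — whose homogeneity $\beta$ is bounded below by a fixed $\beta_0=\beta_0(n,s,\lambda,\Lambda)>0$ — forces $\partial_e\bar u_0$ to grow at least like $|x|^{\beta_0}$, incompatible with the $|x|^{1+\tau}$ growth of $\bar u_0$ once $\tau<\beta_0$. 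Either way $\bar u_0\equiv 0$, the desired contradiction, so the growth estimate holds.

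Finally, the growth estimate — which in particular gives $\nabla u(x_0)=0$ at free boundary points — combines with the interior $C^{1,\bar\alpha}$ bound for $u$ in $\{u>0\}$ through the standard pointwise characterization of $C^{1,\tau}$: at every $x_1\in B_{1/2}$ one exhibits an affine $\ell_{x_1}$ with $\sup_{B_\rho(x_1)}|u-\ell_{x_1}|\le C\rho^{1+\tau}$ for all $\rho$ (taking $\ell_{x_1}=0$ on the contact set, and interpolating against the nearest free boundary point when $x_1\in\{u>0\}$ is close to it), whence $\|u\|_{C^{1,\tau}(B_{1/2})}\le C$, after possibly decreasing $\tau$. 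I expect the main obstacle to be the classification step above — making the half-space barrier comparison rigorous on the possibly irregular set $\R^n\setminus\Sigma_0$ and keeping the homogeneity threshold $\beta_0$ uniform as $s\uparrow 1$ — together with the companion issue, in the blow-up, of controlling the tails of $\partial_e\bar u_k$ using only convexity, the polynomial growth, and boundary regularity.
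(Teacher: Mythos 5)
Your overall architecture is genuinely different from the paper's: you reduce everything to a growth estimate at free boundary points and prove it by a compactness blow-up plus a Liouville-type classification of global convex solutions, whereas the paper's Lemma 2.3 is a direct, non-compactness scaling argument on incremental quotients $v=(\bar u-\bar u(\cdot-h_0))/|h_0|$, with a touching-point evaluation of $M^+_{\mathcal L_0}v$ in which semiconvexity produces a cone of directions where $v\le 1/4$, yielding an immediate contradiction. This design choice matters, because the classification of global convex solutions is exactly the content of the paper's Sections 3--4, and those sections \emph{use} Proposition 2.2 for compactness; so your proof can only avoid circularity if your own classification sketch stands on its own. It does not: the case where $\Sigma_0=\{\bar u_0=0\}$ has nonempty interior has a genuine gap. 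First, a fixed direction $e$ with $\partial_e\bar u_0\ge 0$ in all of $\R^n$ need not exist (convexity only gives monotonicity along rays from an interior point of $\Sigma_0$; if $\Sigma_0$ is, say, a bounded convex body, no global direction works) --- this is fixable by a further blow-down making $\Sigma_0$ a cone, but you do not do it. Second, and more seriously, the barrier comparison cannot be run. Powers $(e\cdot x)_+^{\beta}$ with small $\beta$ are \emph{supersolutions} in $\{e\cdot x>0\}$ (as $\beta\downarrow 0$ they approach the indicator, for which $L\chi_{\{e\cdot x>0\}}<0$ there); subsolutions vanishing outside (nearly) a half-space have homogeneity at least $s$, i.e.\ they grow like $|x|^{s+\epsilon}$, strictly faster than the a priori bound $|x|^{\tau}$ on $\partial_e\bar u_0$. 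Hence there is no region on whose complement/boundary you can guarantee $\partial_e\bar u_0\ge c\Phi$: on $B_R\cap H$ the ordering on $\partial B_R\cap H$ is unknown, and on the unbounded half-space the maximum principle fails because $\partial_e\bar u_0-c\Phi\to-\infty$ along the barrier's growth. The standard barrier arguments of this type (as in the paper's proof of \eqref{claim-theta-infty} or Proposition 6.4) only give \emph{local} nondegeneracy $\partial_e u\gtrsim d^{\,s}$ at unit scale near the boundary, not a growth rate at infinity, so they do not contradict the $R^{\tau}$ growth. Ruling out such cones with slow growth is precisely what the paper needs the boundary Harnack uniqueness theorem in $C^1$ cones plus dimension reduction for; your proposal has no substitute for that step, and you yourself flag it as the crux.

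The rest of the proposal is essentially sound, with smaller issues that could be repaired: passing $|L\partial_e u|\le K$ and the blow-up to the limit is better done at the level of incremental quotients (uniform Lipschitz bounds follow from semiconvexity plus the growth, and one uses stability of viscosity inequalities), since you cannot invoke $C^{1}_{\rm loc}$ compactness from Proposition 2.2 itself; the empty-interior case needs a removability/no-kink argument across the hyperplane made rigorous (the paper's Proposition 4.5 does this with the exponential wedge supersolution of Lemma 4.6), though your ${\rm dist}^{-2s}$ blow-up heuristic is the right mechanism; and the final assembly (growth at contact points plus interior estimates in $\{u>0\}$ via a two-radii comparison) matches the paper's. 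But as written, the nonempty-interior classification step fails, so the proof is incomplete.
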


The Proposition will follow from the following result.
Recall that $M^+_{\mathcal L_0}$ denotes the extremal operator associated to the class $\mathcal L_0$, i.e.,
\[M^+_{\mathcal L_0}u=\sup_{L\in \mathcal L_0}Lu.\]
Here, $\mathcal L_0$ is the class of all operators of the form \eqref{L01}-\eqref{L02}.

\begin{lem}\label{lemC1+tau}
There exist constants $\tau>0$ and $\delta>0$ such that the following statement holds true.

Let $u\in {\rm Lip}(\R^n)$ be a solution to
\[\begin{array}{rcll}
u&\geq&0\quad &\textrm{in}\ \R^n \\
\partial_{ee}u&\geq& -\delta\quad &\textrm{in}\ B_2 \quad \mbox{for all }e\in S^{n-1} \\
M^+_{\mathcal L_0}(u- u(\cdot-h))&\geq& -\delta |h| \quad &\mbox{in } \{u>0\}\cap B_2 \quad \mbox{for all }h\in\R^n,
\end{array}\]
satisfying the growth condition
\[  \sup_{B_R} |\nabla u| \le R^\tau \quad \mbox{for }R\geq1.\]
Assume that $u(0)=0$.
Then,
\[ |\nabla u(x)| \le 2|x|^{\tau}. \] 
The constants $\tau$ and $\delta$ depend only on $n$, $s$, and ellipticity constants.
\end{lem}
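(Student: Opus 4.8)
The strategy is a contradiction/compactness argument combined with an iteration at dyadic scales. Suppose the conclusion fails: then there is a sequence of functions $u_k$ satisfying all the hypotheses with a fixed $\tau$ and $\delta_k\to 0$ (or with $\delta$ fixed, whichever normalization is cleanest), with $u_k(0)=0$, but $\sup_{B_1}|\nabla u_k| > 2$. The natural quantity to monitor is
\[
\theta(r) := \sup_{k}\ \sup_{x\in B_r} \frac{|\nabla u_k(x)|}{r^\tau},
\]
or rather a single-function monotone-in-scale quantity $\sup_{B_r}|\nabla u_k| $ compared with $r^\tau$. The heart of the matter is a dichotomy at each scale: either $\sup_{B_{r/2}}|\nabla u| \le \tfrac12 \cdot 2 r^\tau$ (so we gain), or we can rescale $v(x) = u(rx)/(\text{const}\cdot r^{1+\tau})$ and $v$ again solves a problem of the same type (semiconvexity is scale invariant in the right way, and $M^+_{\mathcal L_0}$ has the correct homogeneity of order $2s$; here one uses $1+\tau < 2s$ — guaranteed by taking $\tau$ small — so the rescaled right-hand side stays controlled), with a normalized gradient that is $\approx 1$ somewhere in $B_1$. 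Iterating the dichotomy and using the growth hypothesis $\sup_{B_R}|\nabla u|\le R^\tau$ to prevent the bad alternative from persisting forever is what yields $|\nabla u(x)|\le 2|x|^\tau$.

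Concretely I would proceed as follows. First, record the scaling: if $u$ solves the system in $B_2$, then for $r\le 1$ the function $u_r(x) = r^{-1-\tau}u(rx)$ satisfies $\partial_{ee}u_r \ge -\delta r^{1-\tau}\ge -\delta$ in $B_{2/r}\supset B_2$, satisfies $M^+_{\mathcal L_0}(u_r - u_r(\cdot - h)) \ge -\delta r^{2s-1-\tau}|h| \ge -\delta|h|$ in $\{u_r>0\}\cap B_2$ (using $2s - 1 - \tau \ge 0$, i.e. $\tau\le 2s-1$ when $s\ge \tfrac12$; when $s<\tfrac12$ one instead keeps $1+\tau<2s$ false, so one must be slightly more careful — in fact one wants $1+\tau$ to play against $2s$, and the correct condition is simply $\tau$ small enough that $r^{2s-1-\tau}\le 1$, which fails for small $r$ when $s<1/2$, so the right move is to absorb the factor by also shrinking $\delta$ through the iteration, or equivalently to phrase the bad alternative as "$v$ solves a problem with a possibly larger but still universally bounded defect"), and $u_r\ge 0$. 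Second, prove the key step: there is $\eta\in(0,1)$ and $\delta>0$ small such that if $u$ solves the system in $B_2$, $u(0)=0$, $\sup_{B_1}|\nabla u|\le 1$, and $\sup_{B_R}|\nabla u|\le R^\tau$ for $R\ge 1$, then $\sup_{B_\eta}|\nabla u|\le \tfrac12\eta^\tau\cdot(\text{something})$, i.e. an improvement of flatness for the gradient. This is proven by compactness: take a violating sequence $u_k$ with $\delta_k\to 0$; by the semiconvexity plus the interior $C^\alpha$ estimate for the incremental quotients $M^+_{\mathcal L_0}(u_k - u_k(\cdot-h))\ge -\delta_k|h|$ (this is exactly the Caffarelli–Silvestre estimate \cite[Theorem 11.1]{CS} applied to difference quotients, giving $C^{1,\alpha_0}$ bounds for $u_k$ away from $\{u_k>0\}^c$ — more precisely uniform $C^{1,\tau'}$ estimates on compact subsets where $u_k>0$, and on $\{u_k = 0\}$ semiconvexity plus $u_k\ge 0$ forces $\nabla u_k = 0$ and controls second derivatives), one extracts a locally uniformly $C^1$-convergent subsequence $u_k\to u_\infty$. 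The limit satisfies $u_\infty\ge 0$, $D^2 u_\infty\ge 0$ (convex!), $M^+_{\mathcal L_0}(\partial_e u_\infty)\ge 0$ in $\{u_\infty>0\}$ for every direction $e$, the growth bound, and $u_\infty(0)=0$, $\nabla u_\infty(0)=0$ (the last because $0\in\{u_\infty=0\}$ and $u_\infty\ge0$ convex vanishing there has zero gradient). A Liouville-type argument then forces $\nabla u_\infty\equiv$ const along the relevant rescaling, contradicting the normalization that kept $|\nabla u_k|\approx 1$: since $u_\infty$ is convex with the sublinear-plus-$|x|^\tau$ gradient growth and $\tau<1$, each $\partial_e u_\infty$ is a bounded (indeed $o(|x|)$) function which is $M^+_{\mathcal L_0}$-subsolution where $u_\infty>0$ and, by convexity $\partial_{ee}u_\infty\ge 0$, is monotone; one shows $\partial_e u_\infty$ must be constant, hence $u_\infty$ is affine, hence $\equiv 0$ by $u_\infty\ge 0$ and $u_\infty(0)=0$ — this kills the gradient and gives the contradiction. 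Third, iterate: apply the key step at scales $\eta, \eta^2, \ldots$, using the scaling from step one to return to unit scale each time (the growth hypothesis $\sup_{B_R}|\nabla u|\le R^\tau$ is exactly what is needed at each stage to feed the compactness step), and sum the geometric improvements to conclude $\sup_{B_{\eta^j}}|\nabla u| \le C\eta^{j\tau}$, which rewritten gives $|\nabla u(x)|\le 2|x|^\tau$ after choosing $\tau$ (and the constant in front of $\eta^\tau$ in the key step) appropriately small and consistently.

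The main obstacle, and the place requiring the most care, is the \emph{Liouville classification of the blow-up limit} $u_\infty$: one must show that a globally defined convex function with $\nabla u_\infty$ of growth $o(|x|)$, vanishing at the origin, and with every directional derivative an $M^+_{\mathcal L_0}$-subsolution on the (unknown, possibly complicated) open set $\{u_\infty>0\}$, is necessarily affine. The subtlety is that the "equation" for $\partial_e u_\infty$ only holds on $\{u_\infty>0\}$, not globally, so one cannot directly apply a nonlocal Liouville theorem; one must exploit convexity (which makes $\partial_e u_\infty$ monotone along $e$ and bounded, hence forces the contact set to be convex and large in a controlled way) together with the one-sided bound $\partial_{ee}u_\infty\ge 0$ and the nonlocal maximum principle to propagate information from $\{u_\infty>0\}$ across the contact set. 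A secondary technical point is matching the scaling exponents: $\tau$ must simultaneously satisfy $\tau<\alpha$ (so the compactness estimates apply), $1+\tau<2s$ \emph{or} a compensating shrinkage of $\delta$ through the iteration (so the rescaled nonlocal defect does not blow up when $s<1/2$), and $\tau<1$ (so convex functions with $|x|^\tau$ gradient growth are forced to be affine). Once these exponent constraints are reconciled and the Liouville step is established, the dyadic iteration is routine.
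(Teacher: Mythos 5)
Your proposal takes a genuinely different route from the paper (which runs a single-scale sliding/maximum-principle argument: it picks a near-optimal scale via the monotone quantity $\theta(r)=\sup_{r'\ge r}(r')^{-\tau}\sup_{B_{r'}}|\nabla u|$, rescales once, looks at a near-maximizing incremental quotient $v=(\bar u-\bar u(\cdot-h_0))/|h_0|$ corrected by a bump, and evaluates $M^+_{\mathcal L_0}v$ at its maximum point, getting a contradiction because semiconvexity together with $\bar u\ge 0$, $\bar u(0)=0$ forces $v\le 1/4$ on a fixed cone of directions opposite to $h_0$). Your compactness-plus-Liouville-plus-dyadic-iteration scheme has two genuine gaps. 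First, the compactness step: to pass the nondegeneracy $\sup_{B_1}|\nabla u_k|>2$ (or $\approx 1$ after rescaling) to the limit you need $C^1_{\rm loc}$ convergence, i.e.\ a uniform modulus of continuity for $\nabla u_k$ across the free boundary. The hypotheses only give a uniform Lipschitz bound, under which the Lipschitz seminorm is merely lower semicontinuous along uniform limits -- the wrong direction -- so the gradient concentration you are trying to contradict can simply vanish in the limit. Your suggested fix (interior $C^{1,\alpha}$ estimates for difference quotients in $\{u_k>0\}$ plus $\nabla u_k=0$ on $\{u_k=0\}$) does not close this: the interior estimates degenerate as one approaches the contact set, and upgrading them to a uniform modulus up to the free boundary requires exactly the quantitative decay $|\nabla u|\lesssim d^\tau$ that this lemma is meant to produce; the argument is circular. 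This is precisely why the paper avoids compactness here and argues directly at one scale.

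Second, the Liouville classification of the limit ($u_\infty$ convex, $u_\infty\ge0$, $u_\infty(0)=0$, gradient growth $O(|x|^\tau)$, difference quotients subsolutions only in $\{u_\infty>0\}$ $\Rightarrow$ $u_\infty$ affine, hence $\equiv0$) is asserted but not proved, and you yourself flag it as the main obstacle. In the paper this type of classification is the content of Sections 3--4 (Theorem \ref{thmclassif} and Propositions \ref{propclassif}, \ref{propclassif2}), which rely on the $C^{1,\tau}$ estimate of Proposition \ref{first-regularity-u} and hence on Lemma \ref{lemC1+tau} itself, so importing that machinery would again be circular; an independent proof at this stage (handling the fact that the equation holds only on the unknown set $\{u_\infty>0\}$) is a substantial missing piece, not a routine one. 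A minor additional remark: your scaling worry for $s<1/2$ is spurious -- when you rescale $u_r(x)=r^{-1-\tau}u(rx)$ the increment also rescales, $h=r\bar h$, so the defect picks up the factor $r^{2s-\tau}\le 1$ (not $r^{2s-1-\tau}$), exactly as in the paper's computation; no extra shrinking of $\delta$ is needed. But fixing that does not repair the two gaps above, which are where the proposal fails.
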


\begin{proof}
We define
\[
\theta(r) := \sup_{r'\ge r}  (r')^{-\tau}\sup_{B_{r'}}  |\nabla u|
\]

Note that $\theta(r)\le 1$ for $r\ge 1$ by the growth control.
We will prove that $\theta(r) \le 2$ for all $r\in (0,1)$, and this will yield the desired result.

Assume by contradiction that $\theta(r)> 2$ for some $r\in(0,1)$.
Then, by definition of $\theta$, there will be $r'\in(r,1)$ such that
\[ (r')^{-\tau}\sup_{B_{r'}}  |\nabla u| \ge (1-\epsilon) \theta(r) \ge (1-\epsilon)\theta(r') \ge \frac{3}{2},\]
where $\epsilon>0$ is a small number to be chosen later. Here where we used that $\theta$ is nonincreasing.

We next define
\[\bar u(x) := \frac{u(r' x)}{\theta(r') (r')^{1+\tau}}.\]
Since $\tau\in (0,s)$, rescaled function satisfies
\[\begin{array}{rcll}
\bar u&\geq&0\quad &\textrm{in}\ \R^n \\
D^2 \bar u&\geq& -(r')^{2-1-\tau}\delta \ge -\delta \quad &\textrm{in}\ B_{2/r'} \supset B_2 \\
M^+_{\mathcal L_0}(\bar u- \bar u(\cdot-\bar h))&\geq& - (r')^{2s-1-\tau}\delta |r'\bar h| \geq -\delta |\bar h| \quad &\mbox{in } \{\bar u>0\}\cap B_2 \quad \mbox{for all }h\in\R^n,
\end{array}\]
Moreover, by definition of $\theta$ and $r'$, the rescaled function $\bar u$ satisfies
\begin{equation}\label{growthc11}
1-\epsilon \le  \sup_{|\bar h|\le 1/4} \sup_{B_1} \frac{\bar u- \bar u(\cdot-\bar h)}{|\bar h|}  \quad \mbox{and}\quad
 \sup_{|\bar h|\le 1/4} \sup_{B_R} \frac{\bar u-\bar u(\cdot-\bar h)}{|\bar h|} \le  (R+1/4)^\tau
 \end{equation}
for all $R\ge 1$.

Let $\eta\in C^2_c(B_{3/2})$ with $\eta\equiv 1$ in $B_1$ and $\eta \le 1$ in $B_{3/2}$.
Then,
\[\sup_{|\bar h|\le 1/4}  \sup_{B_{3/2}}  \left(\frac{\bar u- \bar u(\cdot-\bar h)}{|\bar h|} +3\epsilon\eta\right)\geq 1+2\epsilon.\]
Fix $h_0\in B_{1/4}$ such that
\[ t_0 := \max_{B_{3/2}}  \left(\frac{\bar u- \bar u(\cdot- h_0)}{| h_0|} +3\epsilon\eta\right)\geq 1+\epsilon.\]
and let  $x_0\in B_{3/2}$ be such that
\begin{equation}\label{etatouches}
\frac{\bar u(x_0)- \bar u(x_0- h_0)}{| h_0|} + 3\epsilon \eta(x_0) = t_0 .
\end{equation}

Let us denote
\[ v: = \frac{\bar u- \bar u(\,\cdot\,- h_0)}{| h_0|} . \]

Then, we have
\[v + 3\epsilon \eta \le v(x_0) +3\epsilon\eta(x_0) =  t_0  \quad \mbox{in }\overline{B_{3/2}}.\]
Moreover, if $\tau$ is taken small enough then
\[\sup_{B_4} v \leq (4+1/4)^\tau<1+\epsilon\leq t_0,\]
and therefore
\begin{equation}\label{etatouches2}
v +3\epsilon\eta\le t_0\quad \mbox{in }\overline{B_2}.
\end{equation}
Note also that $x_0\in \{\bar u>0\}$ since otherwise $\bar u(x_0)-\bar u(x_0-h_0)$ would be a nonpositive number.

We now evaluate the equation for $v$ at $x_0$ to obtain a contradiction.

Now we crucially use that $D^2\bar u \ge -\delta {\rm Id}$, $\bar u \ge 0$, and $\bar u(0)=0$.
It follows that, for $z\in B_2$ and $t'\in(0,1)$,
\[\bar u(t'z) \le t' \bar u(z)+ (1-t') \bar u(0)  +  \frac {\delta |z|^2} 2  t'(1-t') \le \bar u(z) + \frac {\delta |z|^2} 2  t'(1-t') \]
and thus, for $t\in(0,1)$, setting $z =x(1+ t/|x|)$ and $t' = 1/(1+t/|x|)$ we obtain, for $x\in B_1$,
\[\bar u(x) -\bar u\left( x + t\frac{x}{|x|}\right)  \le \frac \delta 2 (|x|+t)^2 \frac{t/|x|}{(1+t/|x|)^2 } = \frac {\delta |x| t} 2 \le \delta t .\]

Therefore, denoting $e = h_0 /|h_0|$, $t = |h_0|\le 1$ and using that by \eqref{growthc11}
\[ \|\bar u\|_{\rm Lip(B_1)} \le 3/2\]
we obtain
\[\begin{split}
v(x)= \frac{\bar u(x)- \bar u(x- te)}{t}
&\le  \frac{\bar u(x)- \bar u(x- te)}{t}  +  \frac{\bar u\left( x + t\frac{x}{|x|}\right) -\bar u(x) }{t}  + \delta
\\
&\le \frac{\bar u\left( x + t\frac{x}{|x|}\right)- \bar u(x- te)}{t}  +   \delta
\\
&\le  \frac32\left| e +\frac{x}{|x|} \right|+\delta \,\le\, \frac{1} 4
\end{split}
\]
in the set
\[ \mathcal C_e : = \left\{x\,:\, \left|e+ \frac{x}{|x|}\right| \le \frac 18 \right\}\cap B_1,\]
provided $\delta$ is taken smaller than $1/16$.

Then, recalling that
\[\begin{split}
M^+_{\mathcal L_0}v(x_0) &= \Lambda \int_{\R^n} \left(\frac{v(x_0+y)+v(x_0-y)}{2}-v(x_0)\right)_+|y|^{-n-2s}\,dy \\
&\qquad\qquad- \lambda \int_{\R^n} \left(\frac{v(x_0+y)+v(x_0-y)}{2}-v(x_0)\right)_-|y|^{-n-2s}\,dy,
\end{split}\]
we want to show that $M^+_{\mathcal L_0}v(x_0)<-\delta$ in order to get a contradiction.
Indeed, using
\[1-2\epsilon\leq v(x_0)\leq 1+\epsilon\]
and
\[\frac{v(x_0+y)+v(x_0-y)}{2}-v(x_0)\leq \left\{\begin{array}{ll}
C\epsilon |y|^2&\quad\textrm{in}\quad B_2\\
(|y|+2)^\tau-1+2\epsilon&\quad\textrm{in}\quad \R^n\setminus B_1\\
1/4-1+2\epsilon&\quad\textrm{in}\quad (-x_0+\mathcal C_e\cap B_1),
\end{array}\right.\]
we find
\[\begin{split}
M^+_{\mathcal L_0}v(x_0) &\le \Lambda \int_{B_1} C\epsilon\, |y|^2\, |y|^{-n-2s}\,dy + \Lambda\int_{\R^n\setminus B_1} \bigl\{(|y|+2)^\tau-1+2\epsilon\bigr\} |y|^{-n-2s}\,dy\\
&\qquad\qquad\qquad - \frac{\lambda}{2} \int_{-x_0+\mathcal C_e\cap B_1} \bigl( 1-2\epsilon-1/4\bigr)|y|^{-n-2s}\,dy \\
&\le  C\epsilon+C\int_{\R^n\setminus B_1} \bigl\{(|y|+2)^\tau-1\bigr\}|y|^{-n-2s}\,dy-c,
\end{split}\]
with $c>0$ independent of $\delta$ and $\tau$ (if $\tau$ is small enough).
Thus, if $\epsilon$ and $\tau$ are taken small enough we obtain $-\delta \le M^+_{\mathcal L_0}v(x_0)\le -c/2$; a contradiction  when $\delta\le c/4$.
\end{proof}

We finally give the proof of Proposition \ref{first-regularity-u}.
In fact, the exact same proof will yield the following result, which is an extension of Proposition \ref{first-regularity-u} to equations with bounded measurable coefficients.
This will be used in Sections \ref{sec-fully-1} and \ref{sec-fully-2}.

\begin{prop}\label{first-regularity-u-fully}
Let $L$ be any operator of the form \eqref{L01}-\eqref{L02}, and $u\in{\rm Lip}(\R^n)$ be any function satisfying
\[\begin{array}{rcll}
u&\geq&0\quad &\textrm{in}\ \R^n \\
\partial_{ee}u&\geq& -K\quad &\textrm{in}\ B_2 \\
M_{\mathcal L_0}^+(u-u(\cdot-h))&\geq& -K|h|\quad &\textrm{in}\ \{u>0\}\cap B_1\\
|\nabla u|&\leq& K(1+|x|^{s+\alpha}) \quad &\textrm{in}\ \R^n
\end{array}\]
for all $h\in \R^n$ and $e\in S^{n-1}$.
Then, there exists a small constant $\tau>0$ such that
\[\|u\|_{C^{1,\tau}(B_{1/2})}\leq CK.\]
The constants $\tau$ and $C$ depend only on $n$, $s$, $\lambda$, and $\Lambda$.
\end{prop}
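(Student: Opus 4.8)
The plan is to deduce Proposition~\ref{first-regularity-u-fully} directly from Lemma~\ref{lemC1+tau} (together with its bounded-measurable-coefficients analogue, whose proof is identical) by a standard normalization-and-covering argument. The point is that Lemma~\ref{lemC1+tau} is stated with small structural constants $\delta$ and with a pure power growth $\sup_{B_R}|\nabla u|\le R^\tau$, whereas here we have an arbitrary constant $K$ and the slightly different growth $|\nabla u|\le K(1+|x|^{s+\alpha})$. The first observation is that since $\tau$ is taken small (in particular $\tau<s+\alpha$ is not what we want — rather we will see below we can arrange $\tau\le s+\alpha$ and absorb constants), the bound $K(1+|x|^{s+\alpha})$ implies a bound of the form $CK(1+|x|^\tau)$ only on bounded sets, so the reduction has to be done at a fixed scale. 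Concretely, I would first rescale from $B_2$ to a small ball: set $u_\rho(x)=u(\rho x)/\rho$ for a small fixed $\rho=\rho(n,s,\lambda,\Lambda,K)$ to be chosen, so that $u_\rho\ge0$ in $\R^n$, $\partial_{ee}u_\rho\ge-\rho K\ge-\delta$ in $B_{2/\rho}\supset B_2$, and $M^+_{\mathcal L_0}(u_\rho-u_\rho(\cdot-h))\ge -\rho^{2s-1}K|h|\ge-\delta|h|$ in $\{u_\rho>0\}\cap B_2$ once $\rho$ is small (here $2s-1$ may be negative, which only forces $\rho$ smaller, still fine). After this rescaling $|\nabla u_\rho|\le K(1+\rho^{s+\alpha}|x|^{s+\alpha})$.

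The second and main step is to handle the growth hypothesis. Lemma~\ref{lemC1+tau} needs $\sup_{B_R}|\nabla u_\rho|\le R^\tau$ for all $R\ge1$, but $u_\rho$ only satisfies a bound with the exponent $s+\alpha>\tau$ and with a multiplicative constant. These mismatches are fixed as follows. First divide by a large constant: $w=u_\rho/M$ with $M=M(n,s,\lambda,\Lambda,K)$ large; this does not affect the semiconvexity and Pucci inequalities (they are positively homogeneous of degree one in $u$), and it makes $\sup_{B_1}|\nabla w|$ as small as we wish. Then, for $R\ge1$ we need $\sup_{B_R}|\nabla w|\le R^\tau$; since $|\nabla w|\le (K/M)(1+\rho^{s+\alpha}R^{s+\alpha})\le R^\tau$ fails for large $R$ because $s+\alpha>\tau$, the correct move is instead to \emph{lower} $\tau$ so that $\tau\le s+\alpha$ — but that is not enough either since the constant in front could exceed $1$ for intermediate $R$. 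The clean fix, which is the one actually used in such arguments, is: choose $\tau\le s+\alpha$, and then choose $\rho$ and $M$ so that $(K/M)(1+\rho^{s+\alpha}R^{s+\alpha})\le R^{s+\alpha}\le R^\tau$ is \emph{false}; so instead we keep the proof of Lemma~\ref{lemC1+tau} but replace the growth exponent $\tau$ by $s+\alpha$ throughout — and here one checks that the proof of Lemma~\ref{lemC1+tau} goes through verbatim with any exponent strictly less than $\min\{1,2s\}$ in the growth control, because the only places the exponent enters are the estimates $\sup_{B_4}v<1+\epsilon$ (needs $\tau$ small, independently) and the tail integral $\int_{\R^n\setminus B_1}\{(|y|+2)^\tau-1\}|y|^{-n-2s}\,dy$, which is finite and small as $\tau\downarrow0$ for any exponent below $2s$. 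So in fact the cleanest route is: observe that Lemma~\ref{lemC1+tau} holds with growth exponent $\tau$ replaced by any fixed $\beta<\min\{1,2s\}$ in the hypothesis $\sup_{B_R}|\nabla u|\le R^\beta$, while keeping the \emph{conclusion} $|\nabla u(x)|\le 2|x|^\tau$ with the original small $\tau$; then apply this with $\beta=s+\alpha$ to the normalized function $w$, after first arranging $\sup_{B_1}|\nabla w|\le1$ by dividing by a large constant and rescaling $\rho$ small so that $\rho^{s+\alpha}(K/M)\le1$, giving $\sup_{B_R}|\nabla w|\le (K/M)(1+\rho^{s+\alpha}R^{s+\alpha})\le 2R^{s+\alpha}$, and then absorbing the harmless factor $2$ by one more rescaling.

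Once Lemma~\ref{lemC1+tau} (in the above form, and with $M^+_{\mathcal L_0}$, so automatically covering all $L$ of the form \eqref{L01}--\eqref{L02} since $Lu\le M^+_{\mathcal L_0}u$) is applied to $w$ at the point $0$ — where $w(0)=0$ because, as in the application to $u-\varphi$, the contact point is a zero — we get $|\nabla w(x)|\le 2|x|^\tau$, hence in particular $\sup_{B_{1/2}}|\nabla w|\le C$. Undoing the normalizations gives $\sup_{B_{1/2}}|\nabla u|\le CK$ (shrinking the radius appropriately, then covering $B_{1/2}$ by finitely many small balls to recover the full ball, each ball centered away from $0$ handled by the same dilation argument applied to translates, using that $u\ge0$ and touches $0$ at the free boundary; or more simply, we only need the pointwise bound $|\nabla u(x)|\le CK|x|^\tau$ near $0$ plus the hypothesis $|\nabla u|\le K(1+|x|^{s+\alpha})$ to get a $C^{0,\tau}$ bound on $\nabla u$). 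Finally, to upgrade from the gradient bound to the $C^{1,\tau}$ seminorm estimate $\|u\|_{C^{1,\tau}(B_{1/2})}\le CK$, I would combine the now-known bound $\|\nabla u\|_{L^\infty(B_{3/4})}\le CK$ with the interior $C^{1,\tau}$ estimates for the Pucci extremal operator $M^+_{\mathcal L_0}$ (the interior $C^{\bar\alpha}$ estimate of \cite[Theorem 11.1]{CS} applied to first differences, as in Definition~\ref{defi-baralpha}): in $\{u>0\}\cap B_{3/4}$ the function $u$ is bounded with $M^+_{\mathcal L_0}(u-u(\cdot-h))\ge -K|h|$, and on $\{u=0\}$ the gradient vanishes and the semiconvexity controls the other side, so a standard argument (estimating incremental quotients $(u-u(\cdot-h))/|h|$ by the $C^\tau$ estimate where $u>0$ and trivially where $u=0$) yields the Hölder bound on $\nabla u$. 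The main obstacle, and the only real content, is the growth-exponent bookkeeping in the second paragraph: making sure that the gap between the hypothesized growth exponent $s+\alpha$ and the small conclusion exponent $\tau$ is handled without circularity, i.e. verifying that the proof of Lemma~\ref{lemC1+tau} truly only uses smallness of $\tau$ in the \emph{conclusion} and tolerates a larger (but still $<\min\{1,2s\}$) exponent in the growth \emph{hypothesis}; everything else is routine rescaling and covering.
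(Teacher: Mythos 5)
Your overall architecture (pointwise decay of $\nabla u$ at zero points via Lemma \ref{lemC1+tau}, interior estimates in $\{u>0\}$ at scale comparable to ${\rm dist}(x,\{u=0\})$, then combining the two regimes to get the H\"older bound on $\nabla u$) is the same as the paper's. The genuine gap is your key claim that the proof of Lemma \ref{lemC1+tau} ``goes through verbatim'' when the growth hypothesis $\sup_{B_R}|\nabla u|\le R^\tau$ is replaced by $\sup_{B_R}|\nabla u|\le R^{s+\alpha}$ while the conclusion keeps the small exponent $\tau$. It does not: the exponent enters that proof in more places than the two you list. The quantity $\theta(r)=\sup_{r'\ge r}(r')^{-\tau}\sup_{B_{r'}}|\nabla u|$ is finite only because the hypothesis carries the \emph{same} exponent $\tau$; with exponent $s+\alpha>\tau$ in the hypothesis, $\theta$ may be infinite and the whole iteration collapses. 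If instead you define $\theta$ with exponent $s+\alpha$, then the blow-up $\bar u$ only satisfies $\sup_{B_R}|\nabla\bar u|\le R^{s+\alpha}$, so both the inequality $\sup_{B_4}v<1+\epsilon$ and, more seriously, the smallness of the tail term $\int_{\R^n\setminus B_1}\bigl\{(|y|+2)^{s+\alpha}-1\bigr\}|y|^{-n-2s}\,dy$ fail: with $s+\alpha$ fixed (not small) this integral is a constant of order one which cannot be absorbed by the fixed gain $c$ coming from the cone $\mathcal C_e$, and the final contradiction $M^+_{\mathcal L_0}v(x_0)\le -c/2$ is lost. So the modified lemma you invoke is not established, and your whole reduction hangs on it; moreover you cannot get it by pure exponent bookkeeping, since the conclusion with exponent $\tau$ and hypothesis with exponent $s+\alpha$ genuinely requires a new ingredient.

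That ingredient, which your normalization scheme never uses, is truncation: fix a zero point $x_0\in B_{3/4}$, rescale $u$ around $x_0$, and cut it off outside a fixed ball. Because the operator is nonlocal and the hypotheses are on increments $u-u(\cdot-h)$ (which grow like $|h|\,|x|^{s+\alpha}$ with $s+\alpha<2s$), the far-away modification changes $M^+_{\mathcal L_0}(u-u(\cdot-h))$ only by a bounded tail term, which becomes $\le\delta|h|$ after choosing the dilation parameter small; the truncated, rescaled function then satisfies the growth $\sup_{B_R}|\nabla\cdot|\le R^\tau$ literally, so Lemma \ref{lemC1+tau} applies \emph{as stated} and yields $|\nabla u(x)|\le CK|x-x_0|^{\tau}$. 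With that in hand, your concluding step (interior estimates for the incremental quotients in $\{u>0\}$, the decay estimate plus triangle inequality when $|x-y|$ is comparable to the distance to $\{u=0\}$) is fine; note only that the decay estimate must be run at \emph{every} zero point of $u$ in $B_{3/4}$, not just at the origin, since the Proposition does not assume $u(0)=0$, and that no covering by translated dilations is needed once this is done.
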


\begin{proof}[Proof of Propositions \ref{first-regularity-u} and \ref{first-regularity-u-fully}]
Let $x_0\in B_{3/4}$ be any point at which $u(x_0)=0$.
Then, after rescaling and truncating the function $u$, we may apply Lemma \ref{lemC1+tau} to find
\begin{equation}\label{case1j}
|\nabla u(x)|\leq CK|x-x_0|^{\tau}
\end{equation}
at every such point $x_0\in B_{3/4}$.

Now, let $x\in\{u>0\}$, let $x_0$ be its nearest point on $\{u=0\}$, and let $r=|x-x_0|$.
By \eqref{case1j}, we have
\[|\nabla u(x+rz)|\leq CKr^\tau(1+|z|^\tau).\]
Therefore, by interior regularity estimates \cite{CS}, we will have
\begin{equation}\label{case2j}
[\nabla u]_{C^\tau(B_{r/2}(x))}\leq CK
\end{equation}
for all $x\in \{u>0\}\cap B_{3/4}$.

Let now $x$ and $y$ be any two points in $B_{1/2}$, and let us show that
\begin{equation}\label{we-want-j}
|\nabla u(x)-\nabla u(y)|\leq C|x-y|^\tau.
\end{equation}
Let $d(z)={\rm dist}(z,\{u=0\})$, and $r=\min\{d(x),d(y)\}$.
Define also $R=|x-y|$.

If $2R\geq r$, then \eqref{we-want-j} follows from \eqref{case1j} and the triangle inequality.
If $2R<r$ then $B_{2R}(x)\subset\{u>0\}$, and thus \eqref{we-want-j} follows from \eqref{case2j}.
Hence \eqref{we-want-j} holds, and therefore
\[\|\nabla u\|_{C^\tau(B_{1/2})}\leq C,\]
as desired.
\end{proof}

\section{Uniqueness of positive solutions to $Lv=0$ in $C^1$ cones}
\label{sec3}

The aim of this section is to prove the following Phragmen-Lindel\"of type result.

\begin{thm}\label{thm-uniqueness}
Let $\Sigma\subset \R^n$ be any cone with nonempty interior, with vertex at $0$, and such that $\partial \Sigma$ is $C^1$ away from $0$.
Let $L\in \mathcal L_*$, and $u_1,u_2$ be functions in $C(\R^n)$ satisfying
\[\int_{\R^n}  u_i(y) (1+|y|)^{-n-2s}\,dy  < \infty.\]
Assume that $u_i$ are viscosity solutions to
\begin{equation}\label{eqn1space}\begin{cases}
Lu_i = 0  \quad &\mbox{in } \R^n\setminus \Sigma\\
u_i=  0 & \mbox{in }\Sigma\\
u_i> 0 & \mbox{in } \R^n \setminus \Sigma.
\end{cases}\end{equation}
Then,
\[ u_1 \equiv K u_2 \quad \mbox{ in } \R^n\]
for some $K>0$.
\end{thm}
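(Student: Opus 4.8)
\textbf{Proof strategy for Theorem \ref{thm-uniqueness}.}

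The plan is to prove the statement in two stages: first a boundary Harnack--type comparison giving $u_1 \le C u_2$ and $u_2 \le C u_1$ globally, and then a rigidity argument promoting the two-sided bound to exact proportionality. For the first stage, I would apply the boundary Harnack principle for nonlocal operators in $C^1$ domains from \cite{RS-C1} (valid for the class $\mathcal L_*$), which controls the ratio $u_1/u_2$ locally near $\partial\Sigma \setminus \{0\}$; away from the boundary, the Harnack inequality for the class $\mathcal L_*$ (which is part of $\mathcal L_0$, so the results of \cite{CS} apply) controls $u_1/u_2$ in the interior. Because $\Sigma$ is a cone and $L\in\mathcal L_*$ is homogeneous, the equation is invariant under the scaling $u_i(x) \mapsto u_i(\rho x)$, so these local estimates can be made scale-invariant: the constant in $c\, u_2(x) \le u_1(x) \le C\, u_2(x)$ can be taken independent of the dyadic annulus $B_{2^{k+1}}\setminus B_{2^k}$. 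Since the $u_i$ are continuous and positive on the connected open set $\R^n\setminus\Sigma$, patching the annular estimates yields the global two-sided bound
\[
c\, u_2 \le u_1 \le C\, u_2 \quad\text{in }\R^n.
\]

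For the second stage, set
\[
K := \sup\Bigl\{ t>0 : u_1 \ge t\, u_2 \text{ in } \R^n \Bigr\},
\]
which is a well-defined positive real by the lower bound above, and finite by the upper bound. Let $w := u_1 - K u_2 \ge 0$. Then $w$ is a viscosity solution of $Lw = 0$ in $\R^n\setminus\Sigma$, $w=0$ in $\Sigma$, and $w\ge 0$ everywhere, with the same growth/integrability control $\int w(y)(1+|y|)^{-n-2s}\,dy<\infty$. The goal is to show $w\equiv 0$. Suppose not: then $w>0$ somewhere in $\R^n\setminus\Sigma$, and by the strong maximum principle (Harnack) $w>0$ throughout the connected set $\R^n\setminus\Sigma$. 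Re-running the boundary Harnack comparison of the first stage with $w$ in place of $u_1$ gives $w\ge \varepsilon\, u_2$ in $\R^n$ for some $\varepsilon>0$, i.e. $u_1 \ge (K+\varepsilon)u_2$, contradicting the maximality of $K$. Hence $w\equiv 0$ and $u_1 \equiv K u_2$.

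The only delicate point is making the boundary Harnack estimates \emph{scale-invariant} and then \emph{patching} them into a genuinely global inequality. Near $0$ the domain $\partial\Sigma$ is only $C^1$ at scale zero — but the cone structure saves us: rescaling $B_{2^k}\setminus B_{2^{k-1}}$ to the unit annulus produces a fixed domain with a uniform $C^1$ modulus (the $C^1$ character of $\partial\Sigma\setminus\{0\}$ is scale-invariant because $\Sigma$ is a cone), so the boundary Harnack constant is uniform in $k$; letting $k\to-\infty$ handles the behavior near the vertex, and $k\to+\infty$ handles the behavior at infinity, where the growth/integrability hypothesis on the $u_i$ is what prevents the ratio from degenerating. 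I would also need to check that the nonlocal boundary Harnack of \cite{RS-C1} is stated (or readily adapted) for the two-membered comparison $w$ vs.\ $u_2$ where $w$ satisfies only $Lw=0$ with nonnegative exterior data rather than $w$ itself being of the special form \eqref{eqn1space} — but since $w\ge0$ solves the same linear equation in $\R^n\setminus\Sigma$ with $w=0$ on $\Sigma$, this is exactly the setting of the nonlocal boundary Harnack, so no new input is required. A minor point is to confirm that the viscosity solutions here have enough regularity (interior $C^{2s+\varepsilon}$ by \cite{CS}, plus $C^s$ boundary regularity from \cite{RS-K}) for all the comparison arguments to be rigorous; this is standard for the class $\mathcal L_*$.
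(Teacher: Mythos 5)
Your overall architecture coincides with the paper's: a global two-sided comparison $c\,u_2\le u_1\le C\,u_2$ followed by a sliding/maximality argument (set $K=\sup\{t>0:u_1\ge t\,u_2\}$ and show the nonnegative difference vanishes). Your Stage 2 is essentially Step 2 of the paper's proof and is fine. The genuine gap is in Stage 1, at the sentence ``patching the annular estimates yields the global two-sided bound.'' The scale-invariant boundary Harnack on a rescaled dyadic annulus only controls the \emph{oscillation} of the ratio $u_1/u_2$ on that annulus: to apply Proposition \ref{bdryHarnackC1} at scale $R$ you must renormalize $\bar u_i(x)=u_i(Rx)/C_i(R)$ with $C_i(R)=\int_{\R^n} u_i(Ry)(1+|y|)^{-n-2s}\,dy$, and the conclusion reads $c\,\tfrac{C_1(R)}{C_2(R)}\le u_1/u_2\le C\,\tfrac{C_1(R)}{C_2(R)}$ on the corresponding region. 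Chaining such bounds over overlapping annuli allows the ratio to drift by a bounded factor per annulus, hence by an exponentially large factor over $|k|$ dyadic scales; nothing in your argument excludes $u_1/u_2\to 0$ or $\infty$ as $x\to 0$ (the vertex) or as $|x|\to\infty$. You invoke the integrability hypothesis as ``what prevents the ratio from degenerating,'' but you give no mechanism by which it enters, and that is precisely the crux of the theorem.

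The paper supplies the missing mechanism in two pieces. First, Lemma \ref{lemB1} is a unit-scale statement in which \emph{both} functions are normalized by the same nonlocal quantity $\int_{\R^n}u_i(y)(1+|y|)^{-n-2s}\,dy=1$, and its proof uses genuinely nonlocal barriers --- in Step 1 a subsolution fed by the far-field mass, and in Step 3 the subsolution $w=u_2\chi_{B_1}+C\chi_{B_{1/4}(P)}$ --- to propagate the comparison from the annulus $B_{3/2}\setminus B_{1/2}$ (where the $C^1$ boundary Harnack of \cite{RS-C1} applies) into all of $\overline{B_1}$, including the neighborhood of the vertex where the domain is not $C^1$; the resulting constant depends only on $n$, $s$, $\Sigma$ and ellipticity, not on the particular solutions. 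Second, in the proof of Theorem \ref{thm-uniqueness} the comparison is anchored at a fixed point $P$ with $u_i(P)=1$: applying the lemma at scale $R$ and evaluating at $P/R$ shows that the normalization constants $C_1(R)$ and $C_2(R)$ are comparable uniformly in $R$, which is exactly what rules out the drift and allows one to send $R\to\infty$ to obtain the global bound \eqref{comparable}. If you replace your patching step by these two ingredients (a same-normalization unit-scale lemma reaching the vertex, plus the anchoring argument), your Stage 2 then goes through as written.
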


The proof of the previous result requires several ingredients.
First, we will need a boundary Harnack inequality for the class $\mathcal L_*$ in $C^1$ domains, established in \cite{RS-C1}.

\begin{prop}[\cite{RS-C1}]\label{bdryHarnackC1}
Let $L\in \mathcal L_*$, and $\Omega\subset \R^n$ be a $C^1$ domain with modulus of continuity $\rho$.
Let $u_1,u_2\in C(\R^n)$ be two (viscosity) solutions of
\begin{equation}\label{eqnpointbdryreg}
\begin{cases}
Lu_i = 0  \quad &\mbox{in } B_1\cap \Omega\\
u_i =  0 & \mbox{in }B_1\setminus \Omega\\
u_i \geq  0 & \mbox{in }\R^n.
\end{cases}
\end{equation}
Assume
\[\int_{\R^n} u_i(y) \bigl(1+|y|\bigr)^{-n-2s}\,dx=1.\]

Then,
\[ 0<c\leq \frac{u_1}{u_2}\leq C\qquad \textrm{in}\ B_{1/2},\]
where $c$ and $C$ depend only on $\rho$, $n$, $s$, and ellipticity constants.
\end{prop}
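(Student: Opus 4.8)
The plan is to deduce the two-sided bound from the sharper fact that $u_1$ and $u_2$ are each comparable, up to multiplicative constants, to $d^s$, where $d(x):={\rm dist}(x,\partial\Omega)$; that is, to prove
\[ c\,d(x)^s\,\le\, u_i(x)\,\le\, C\,d(x)^s\qquad\text{for }x\in B_{1/2}\cap\Omega,\ i=1,2 , \]
with $c,C$ depending only on $\rho,n,s,\lambda,\Lambda$. Dividing the two chains of inequalities gives $c/C\le u_1/u_2\le C/c$ in $B_{1/2}\cap\Omega$, while on $B_{1/2}\setminus\Omega$ both functions vanish, so the proposition follows. If $\partial\Omega\cap\overline{B_{1/2}}=\varnothing$ this is just the interior Harnack inequality for the class $\mathcal{L}_0\supset\mathcal{L}_*$ of \cite{CS}, so I would assume $0\in\partial\Omega$. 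Since each $u_i$ is an $L$-harmonic (viscosity) function in $D:=B_1\cap\Omega$ vanishing in $B_1\setminus\Omega$, by well-posedness of the exterior Dirichlet problem for $L\in\mathcal{L}_*$ one has the Poisson representation $u_i(x)=\int_{\R^n\setminus D}u_i(y)P_D(x,y)\,dy=\int_{\R^n\setminus B_1}u_i(y)P_D(x,y)\,dy$ for $x\in D$, using $u_i\equiv0$ on $B_1\setminus\Omega$.

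The key step I would isolate is a pair of \emph{boundary} estimates for the Poisson kernel $P_D$: for $x\in B_{1/2}\cap\Omega$ and $y\notin\overline D$,
\[ c\,\frac{d(x)^s}{(1+|y|)^{n+2s}}\,\le\, P_D(x,y)\,\le\, C\,\frac{d(x)^s}{(1+|y|)^{n+2s}} , \]
again with constants depending only on $\rho$ and $n,s,\lambda,\Lambda$. Granting these, the upper bound for $u_i$ is immediate from the normalization $\int_{\R^n}u_i(y)(1+|y|)^{-n-2s}\,dy=1$ and $u_i\ge0$: $u_i(x)\le C\,d(x)^s\int_{\R^n\setminus B_1}u_i(y)(1+|y|)^{-n-2s}\,dy\le C\,d(x)^s$ in $B_{1/2}\cap\Omega$.

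For the lower bound I would split according to where the unit mass $\int u_i(y)(1+|y|)^{-n-2s}\,dy=1$ concentrates. If $\int_{B_1\cap\Omega}u_i(y)(1+|y|)^{-n-2s}\,dy\le\tfrac12$, then the complement carries mass $\ge\tfrac12$ and the lower Poisson estimate gives $u_i(x)\ge\tfrac c2\,d(x)^s$ directly. Otherwise $\int_{B_1\cap\Omega}u_i\,dy\ge c_0>0$; by the upper bound already proved, the contribution of the region near $\partial D$ is negligible, so $u_i$ is bounded below at some interior point, and the interior Harnack inequality for $\mathcal{L}_0$ (chained finitely many times, the number controlled by $\rho$) propagates this to $u_i\ge c_0'$ on a fixed ball $\overline{B_\rho(A')}\subset B_{1/2}\cap\Omega$ with $d(A')\ge\delta_0>0$. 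Comparing $u_i$ with the $L$-harmonic function in $(B_{1/2}\cap\Omega)\setminus\overline{B_\rho(A')}$ that equals $c_0'$ on $\overline{B_\rho(A')}$ and $0$ on the rest of $\R^n$, the comparison principle produces a subsolution below $u_i$ which, by the same lower (Hopf-type) boundary estimate, is $\ge c_1\,d(x)^s$ near $\partial\Omega$ inside $B_{1/4}$; a standard covering and rescaling upgrade this to $B_{1/2}\cap\Omega$. Combining the two bounds then finishes the proof.

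The genuinely hard part is the pair of boundary Poisson-kernel estimates — equivalently, the $C^s$-up-to-the-boundary estimate and the Hopf-type lower estimate for $L$-harmonic functions in a \emph{merely} $C^1$ domain. This is precisely the content of \cite{RS-C1}: because $\Omega$ is $C^1$ but not $C^{1,\alpha}$, no scale-invariant barrier exists, so one must run a barrier argument at every dyadic scale $r\le r_0$, using that $D\cap B_r(z)$, after rescaling, is $\rho(r)$-flat with $\rho(r)\le\rho(r_0)$ small, so that a single flat-domain barrier applies uniformly and the constants do not degenerate as $r\downarrow0$. Constructing these barriers and checking the sub/supersolution inequalities for the power function $d^s$ against \emph{every} operator in $\mathcal{L}_*$ (whose one-dimensional $s$-harmonic profiles are not explicit) is the technical core; by contrast the representation formula, the mass dichotomy, and the Harnack chaining are all soft. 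Alternatively one can avoid Poisson kernels and argue with explicit sub/supersolutions throughout, but then the unbounded far-field tail of $u_i$ has to be absorbed by a truncation argument, which is comparably delicate.
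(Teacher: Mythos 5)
The central step of your argument --- the two-sided bound $c\,d^s\le u_i\le C\,d^s$ in $B_{1/2}\cap\Omega$ with constants depending only on the modulus $\rho$ (equivalently, your two-sided Poisson-kernel estimate) --- is false in a domain that is merely $C^1$, and this is exactly why the proposition is nontrivial. Comparability of a solution with $d^s$ holds in $C^{1,\alpha}$ domains, and more generally when the modulus of continuity of the normal is Dini; for a general $C^1$ modulus the dyadic barrier scheme you describe accumulates a multiplicative error of the form $\prod_k\bigl(1+C\rho(2^{-k})\bigr)$, which diverges unless $\sum_k\rho(2^{-k})<\infty$. What survives in a general $C^1$ domain is only $c_\epsilon\,d^{s+\epsilon}\le u_i\le C_\epsilon\,d^{s-\epsilon}$ for every $\epsilon>0$, and dividing an upper bound of order $d^{s-\epsilon}$ by a lower bound of order $d^{s+\epsilon}$ gives nothing near $\partial\Omega$. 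This is the nonlocal analogue of the classical fact that the boundary Harnack principle in Lipschitz or $C^1$ domains cannot be obtained by comparing each harmonic function separately to the distance function, since harmonic measure need not be comparable to $d$ there. The present paper is consistent with this: Theorem \ref{thmC1alpha} controls $v/d^s$ only in $C^{1,\gamma}$ domains, whereas in merely $C^1$ domains Theorem \ref{thmC1} controls only the quotient $v_1/v_2$.

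The proof in \cite{RS-C1} therefore does not go through $d^s$ at all: it compares $u_1$ and $u_2$ to each other directly, by an iteration over dyadic scales in which the quantity being improved is the oscillation of the ratio $u_1/u_2$, using the interior Harnack inequality together with barriers for the flattened problem at each scale; the point is that the scale-by-scale errors enter the \emph{ratio} additively, in a summable (geometric) series, rather than multiplicatively in each solution separately. The soft parts of your outline --- the dichotomy on where the unit tail mass sits, the Harnack chaining to a fixed interior ball, and the use of the normalization $\int u_i(y)(1+|y|)^{-n-2s}\,dy=1$ --- are sound and indeed reappear in the proof of Lemma \ref{lemB1} of this paper, but the reduction to $u_i\asymp d^s$ cannot be repaired without strengthening the hypothesis on $\Omega$ from $C^1$ to at least a Dini condition on the normal, which would prove a strictly weaker statement than the one claimed.
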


We next show an auxiliary lemma, a version of boundary Harnack, which is the first step towards Theorem \ref{thm-uniqueness}.

\begin{lem}\label{lemB1}
Let $\Sigma\subset \R^n$ be a cone with nonempty interior, with vertex at $0$, and such that $\partial \Sigma$ is $C^1$ away from $0$.
Let $L\in \mathcal L_*$, and $u_i$, $i=1,2$, be two solutions of
\begin{equation}\label{eqn1}
\begin{cases}
Lu_i = 0  \quad &\mbox{in } B_2\setminus \Sigma\\
u_i =  0 & \mbox{in }\Sigma\\
u_i > 0 & \mbox{in } \R^n \setminus \Sigma.
\end{cases}
\end{equation}
Assume in addition that
\begin{equation}\label{integral1}
\int_{\R^n}  u_i(y) (1+|y|)^{-n-2s}\,dy  = 1.
\end{equation}
Then,
\[ u_1 \ge c\, u_2 \quad \mbox{and} \quad u_2 \ge c\, u_1 \quad \mbox{ in }\overline{B_1}\]
for some $c>0$ depending only on $n$, $s$, $\Sigma$, and the ellipticity constants.
\end{lem}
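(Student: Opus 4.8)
The plan is to deduce Lemma~\ref{lemB1} from the $C^1$ boundary Harnack inequality of Proposition~\ref{bdryHarnackC1} by a covering-and-chaining argument, the only subtlety being that the cone $\Sigma$ is not a $C^1$ domain near its vertex $0$. First I would note that away from the vertex the cone \emph{is} a nice $C^1$ domain: more precisely, for each $\rho\in(0,2)$ the set $(B_{2}\setminus B_{\rho/4})\cap\partial\Sigma$ is, up to rescaling by $\rho$, a piece of a $C^1$ hypersurface with a modulus of continuity $\omega$ that is uniform in $\rho$ (it is just the modulus of continuity of $\partial\Sigma\cap(B_2\setminus B_{1/4})$, which is a compact $C^1$ piece, rescaled). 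Hence Proposition~\ref{bdryHarnackC1}, applied in balls of radius comparable to $\mathrm{dist}(x,0)$ centered at points $x\in\partial\Sigma\cap\overline{B_1}$, together with the interior Harnack inequality inside $\overline{B_1}\setminus\Sigma$, gives
\[
c(\rho)\,u_2 \;\le\; u_1 \;\le\; C(\rho)\,u_2 \qquad\text{in } \overline{B_1}\setminus B_\rho,
\]
with constants depending on $\rho$ but not on $u_i$; the only thing to rule out is that the constants degenerate as $\rho\downarrow0$.

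The main work is therefore the estimate near the vertex. Here I would use the normalization \eqref{integral1} to get a two-sided bound. For the upper bound $u_1\le C$ in, say, $B_{1/2}$: since $Lu_1=0$ in $B_2\setminus\Sigma$, $u_1\ge0$ everywhere, and $\int u_1(y)(1+|y|)^{-n-2s}\,dy=1$, I can build a supersolution barrier in $B_1\setminus\Sigma$ of the form $C\,\bigl(\text{something vanishing on }\Sigma\text{, bounded, with }L(\cdot)\le0\bigr)$ plus a harmless correction controlled by the tail integral, and conclude $u_1\le C$ in $B_{1/2}$ by comparison; symmetrically for $u_2$. For the lower bound I need $u_i\ge c>0$ somewhere definite, say $u_i(x_*)\ge c$ for a fixed interior point $x_*\in B_{1}\setminus\overline{\Sigma}$: if not, $u_i$ would be pointwise tiny at $x_*$, and then (interior Harnack in the connected open set $B_{3/2}\setminus\Sigma$, which links $x_*$ to a large region, plus the upper bound just proved and the barrier controlling the values near $\partial\Sigma$) $u_i$ would be uniformly tiny on a set carrying definite mass for the measure $(1+|y|)^{-n-2s}\,dy$, contradicting \eqref{integral1}; the outer tail $|y|$ large contributes boundedly since $u_i\ge0$ and the mass is $1$. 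Once $u_i$ is trapped between two positive constants on a fixed compact piece of $B_1\setminus\Sigma$ away from both $0$ and $\partial\Sigma$, the boundary Harnack on the dyadic annuli $B_{2^{-k}}\setminus B_{2^{-k-1}}$ can be \emph{chained from the outside in}: comparing $u_1/u_2$ on overlapping annuli gives that the ratio on the $(k{+}1)$-st annulus is within a fixed factor of the ratio on the $k$-th annulus, but by homogeneity (rescaling $x\mapsto 2^{-k}x$ maps the cone to itself) each such comparison has the \emph{same} constant, so a Harnack-type iteration shows $u_1/u_2$ stays comparable to its value on the first annulus — hence bounded above and below uniformly down to the vertex. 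Combining the near-vertex bound with the away-from-vertex bound gives $u_1\ge c\,u_2$ and $u_2\ge c\,u_1$ on all of $\overline{B_1}$.

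The step I expect to be the main obstacle is making the chaining near the vertex genuinely uniform: one must be careful that the constant in Proposition~\ref{bdryHarnackC1} depends only on the modulus of continuity of $\partial\Sigma$ (which \emph{is} scale-invariant here) and on ellipticity, and that the nonlocal tails created by rescaling are harmless. Concretely, when I rescale to the unit annulus to apply the $C^1$ boundary Harnack, the tail $\int_{\R^n} u_i(2^{-k}y)(1+|y|)^{-n-2s}\,dy$ must be shown to be bounded above and below by constants independent of $k$; the upper bound follows from the global $L^\infty$-type control coming from the barrier and \eqref{integral1}, and the lower bound from the fixed positive mass near the first annulus. Once these tails are pinned down, the homogeneity of $\Sigma$ and of $L\in\mathcal L_*$ makes every rescaled problem an instance of the \emph{same} hypothesis set, so the same constant applies at every scale and the geometric iteration closes. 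The alternative, invoking a Phragmen--Lindel\"of / Liouville argument directly, would also work but seems to require more structure than is available at this point in the paper, so I would favor the barrier-plus-chaining route.
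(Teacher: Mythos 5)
There is a genuine gap, and it sits exactly where you anticipated: the dyadic chaining near the vertex does not close. The boundary Harnack of Proposition \ref{bdryHarnackC1}, applied on the annulus at scale $2^{-k}$, only controls the \emph{oscillation} of $u_1/u_2$ within that annulus by a fixed factor $C$; chaining across $k$ overlapping annuli then multiplies these factors and gives comparability between the first annulus and the $k$-th one only with constant $C^{k}$, which blows up. Having ``the same constant at every scale'' (by homogeneity of $\Sigma$ and of $L$) does not repair this: what one would need is a geometric \emph{decay} of the oscillation of $u_1/u_2$ from scale to scale, i.e.\ a boundary Harnack principle in the conical (non-$C^1$) domain itself, which is essentially the statement being proved. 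Equivalently, each rescaled application of Proposition \ref{bdryHarnackC1} requires normalizing $u_i(2^{-k}\cdot)$ by its tail $C_i^{(k)}=\int u_i(2^{-k}y)(1+|y|)^{-n-2s}dy$, so it only pins $u_1/u_2$ at scale $2^{-k}$ to the ratio $C_1^{(k)}/C_2^{(k)}$. Your proposed fix --- that these tails are bounded above \emph{and below} uniformly in $k$ --- is false in general: both $C_1^{(k)}$ and $C_2^{(k)}$ tend to $0$ as $k\to\infty$, at a rate governed by the (unknown) decay of $u_i$ at the vertex, and controlling their \emph{ratio} uniformly is circular, since that is precisely the comparability near the vertex you are trying to establish.

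The paper avoids any iteration down to the vertex by a one-shot nonlocal comparison. After (i) showing, from the normalization \eqref{integral1}, that both $u_i$ are comparable to $1$ on a ball $B_{1/4}(P)$ with $|P|=1$, $B_1(P)\subset\R^n\setminus\Sigma$ (via a dichotomy: either enough mass is nearby and interior Harnack applies, or the far mass itself makes an explicit bump a subsolution), and (ii) applying the $C^1$ boundary Harnack only in the fixed annulus $B_{3/2}\setminus B_{1/2}$, one sets $w=u_2\chi_{B_1}+C\chi_{B_{1/4}(P)}$. The truncation error in $Lw$ inside $B_{1/2}$ is controlled by the normalized tail \eqref{integral1}, while the bump on $B_{1/4}(P)$ contributes a definite positive amount through the kernel, so $Lw\ge0$ in $B_{1/2}\setminus\Sigma$ for $C$ universal. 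Since $u_1\ge c\,w$ outside $B_{1/2}$ (by (i), (ii), and $u_1\ge0$), the maximum principle gives $u_1\ge c\,w\ge c\,u_2$ in $B_1$, uniformly up to the vertex. This single barrier argument is the nonlocal ingredient your chaining route is missing; your treatment away from the vertex and your two-sided bounds at a fixed interior point are in the spirit of the paper's Steps 1--2, but the vertex estimate as you propose it does not go through.
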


\begin{proof} We divide the proof into three steps.

{\em Step 1.} We show first that if $P$ is a point with $|P|=1$ and $B_1(P)\subset \R^n \setminus \Sigma$ then \eqref{eqn1} and \eqref{integral1} imply that $u_1$ and $u_2$ are both comparable to $1$ in $\overline{B_{1/4}(P)}$.
First, by Theorem 5.1 in \cite{CS3} we have that
\[  0 \le u_i \le C  \quad \mbox{ in } \overline{B_{1/4}(P)}.\]
On the other hand we have the following dichotomy: either
\begin{itemize}
\item[(a)] we have $\int_{{B_{3/4}(P)}} u_i(y) (1+|y|)^{-n-2s}\,dy \ge \frac12$.
\end{itemize}
or
\begin{itemize}
\item[(b)] we have $\int_{\R^n \setminus {B_{3/4}(P)}} u_i(y) (1+|y|)^{-n-2s}\,dy \ge \frac12$.
\end{itemize}
In both cases we claim that
\begin{equation}\label{boundbybelow}
 u_i \ge c >0  \quad \mbox{ in } \overline{B_{1/4}(P)}.
\end{equation}
Indeed, in the case (a) the supremum of $u_i$ in $B_{1/4}(P)$ is bounded below by a positive universal constant and hence the interior Harnack inequality immediately implies \eqref{boundbybelow}.

In case (b) the function
\[\phi(x) = c\bigl(1-4|x-P|\bigr)^2 \chi_{B_{3/4}(P)}(x)+ u_i(x) \chi_{\R^n\setminus B_{3/4}(P)}(x),\]
with $c$ small enough, satisfies $L\phi\ge0$ in $\overline B_{1/2}$ (it is a subsolution of the equation).
Therefore, since $u\ge \phi$ in $\R^n \setminus B_{1/2}(P)$ we obtain
\[  u_i(x) \ge  c\bigl(1-4|x-P|)^2 \quad \mbox{in } B_{1/2}(P),\]
and \eqref{boundbybelow} follows.

{\em Step 2.}  We show that $u_1$ and $u_2$ are comparable up to the boundary in the annulus
\[A:= B_{3/2}\setminus B_{1/2}.\]

Note that by assumption the domain $B_{2}\setminus\Sigma$ will be $C^1$ at all the boundary points $z \in \partial \Sigma\cap (B_{3/2}\setminus B_{1/2})$.

Therefore, by Proposition \ref{bdryHarnackC1}, we have
\begin{equation}\label{bdryexpansion}
 0<c\leq \frac{u_1}{u_2}\leq C\quad \textrm{in}\ A
\end{equation}
with $C$ depending only on $n$, $s$, $\Sigma$ and the ellipticity constants.

{\em Step 3.}  We finally show that $u_1\ge c\, u_2$.
Let us define
\[ w = \bigl\{u_2 \chi_{B_1}  + C\chi_{B_{1/4}(P)}\bigl\}.\]
It follows from \eqref{integral1} that $w$ is a subsolution in $B_{1/2}\setminus \Sigma$, i.e.,
\[ Lw \ge 0  \quad \mbox{in }B_{1/2}\setminus \Sigma,\]
provided that $C$ is chosen large depending only on $n$, $s$, and ellipticity constants.
Notice that here we are exploiting the nonlocal character of the equation in order to obtain such a simple subsolution.

Thanks to Step 2 we have $u_1 \ge c\, w$ in $B_{3/2}\setminus B_{1/2}$, for some $c>0$ depending only on $n$, $s$, $\Sigma$ and the ellipticity constants.
Since $w=0$ outside $B_{3/2}$ and $u_1\ge 0$ we also have $u_1 \ge c w$ outside $B_{3/2}$.
The same inequality trivially holds in $\Sigma$ where both $u_1$ and $w$ vanish.
Thus, it follows from the maximum principle that
\[ u_1 \ge c\, w \ge c\,u_2 \quad \mbox{in all of}\ B_1,\]
as desired.
\end{proof}

Using the previous Lemma, we can now establish Theorem \ref{thm-uniqueness}.

\begin{proof}[Proof of Theorem \ref{thm-uniqueness}]
Let $P$ be a point with $|P|=1$ and $B_1(P)\subset \R^n \setminus \Sigma$.
We may assume after normalization that
\[ u_i(P) = 1,\]
and we want to prove $u_1 \equiv u_2$.

{\em Step 1}. We first show, using Lemma \ref{lemB1} at every scale, that
\begin{equation}\label{comparable}
u_1\ge c\,u_2  \quad \mbox{and}\quad u_2\ge cu_1  \quad \mbox{in all of }\R^n,
\end{equation}
for some $c>0$ depending only on $n$, $s$, ellipticity constants, and $\Sigma$.

Indeed, given $R\ge 1$ we define $\bar u_i$ as
\[\bar u_i (x)  = \frac{u_i(Rx)}{C_i},\]
where $C_i$ are chosen so that
\[\int_{\R^n}  \bar u_i(y) (1+|y|)^{-n-2s}\,dy  = 1.\]
By Lemma \ref{lemB1}, we have
\begin{equation}\label{comparableB1}
\bar u_1 \ge  c\, \bar u_2\quad \mbox{and}\quad \bar u_2 \ge  c\,\bar u_1\quad \mbox{in }B_{1/2}.
\end{equation}
But since
\[  1 = u_i(P)  =  C_i \bar u_i(P/R),\]
and since $\bar u_1(P/R)$ and $\bar u_2(P/R)$ are comparable, then we obtain that $C_1$ and $C_2$ are comparable.
Hence, rescaling the first inequality in \eqref{comparableB1} from $B_1$ to $B_R$ we obtain
\[  u_1\ge c\,u_2  \quad \mbox{and}\quad u_2\ge c\,u_1   \quad \mbox{in  }B_R.\]
Since $R\ge 1$ is arbitrary, \eqref{comparable} follows.

{\em Step 2}.
We define
\[ \bar c = \sup\bigl\{ c>0\ :\ u_2\ge c\,u_1\mbox{ in all of }\R^n\bigr\}.\]
By Step 1 we also have $\bar c \neq \pm \infty$.
Define
\[ v = u_2-\bar c\, u_1,\]
which is either $0$ in all of $\R^n$ or positive in $\R^n\setminus \Sigma$ (by the interior Harnack inequality).
If $v>0$ in $\R^n\setminus \Sigma$, applying Step 1 to the two functions $v/v(P)$ and $u_1$, with $v/v(P)$ playing the role of $u_2$, we deduce that $v>\delta u_1$ for some $\delta>0$ ---which may depend on $v$.
This is a contradiction with the definition of $\bar c$, and hence it must be $\bar v\equiv 0$ and $u_2 \equiv \bar c u_1$.
Since $u_1(P)=u_2(P)=1$, then $\bar c=1$, and the result is proved.
\end{proof}

\section{Classification of global convex solutions}
\label{sec4}

The aim of this section is to prove the following result, which classifies all global convex solutions to the obstacle problem under a growth assumption on $u$.

\begin{thm}\label{thmclassif}
Let $\Omega\subset \R^n$ be a closed convex set, with $0 \in \partial\Omega$.
Let $\alpha\in(0,s)$ be such that $1+s+\alpha<2$.
Let $u\in C^{1}(\R^n)$ be a function satisfying, for all $h\in\R^n$,
\begin{equation}\label{eqnliouville}\begin{cases}
L (\nabla u) = 0  \quad &\mbox{in } \R^n\setminus \Omega\\
L \bigl(u-u(\cdot-h)\bigr) \geq 0  \quad &\mbox{in } \R^n\setminus \Omega\\
D^2 u \ge 0 \quad &\mbox{in } \R^n\\
u=  0 & \mbox{in }\Omega\\
u\ge 0 & \mbox{in } \R^n.
\end{cases}\end{equation}
Assume that $u$ satisfies the following growth control
\begin{equation}\label{growthctrolgradient}
\|\nabla u\|_{L^\infty(B_R)} \le R^{s+\alpha} \quad \mbox{ for all }R\ge 1.
\end{equation}
Then, either $u \equiv 0$, or
\[ \Omega =\{e\cdot x\le 0\}\quad \mbox{ and }\quad u(x) = C (e\cdot x)_+^{1+s} \]
for some $e\in S^{n-1}$ and $C>0$.
\end{thm}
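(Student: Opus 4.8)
The plan is to follow the blow-down scheme outlined in the introduction: reduce to the case where the contact set is a cone, split into the ``thin'' and ``fat'' cases, and in the fat case use the uniqueness result of Theorem \ref{thm-uniqueness} to force the cone to be a half-space. I would first dispose of the degenerate case $\Omega$ has empty interior. If $\mathrm{int}\,\Omega=\emptyset$ then $L(\nabla u)=0$ in $\R^n$ minus a set of measure zero, and since $\nabla u$ is continuous and grows like $R^{s+\alpha}$ with $s+\alpha<1$, a Liouville-type theorem for $\mathcal L_*$ (applied componentwise) forces $\nabla u$ to be constant, hence $u$ affine; but $u\ge 0$, $u=0$ on the nonempty $\Omega$, and $D^2u\ge 0$ then give $u\equiv 0$. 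So from now on assume $\Omega$ has nonempty interior.

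\emph{Blow-down to a cone.} For $R\ge 1$ set $u_R(x)=u(Rx)/\|\nabla u\|_{L^\infty(B_R)}R$, which is normalized so that $\|\nabla u_R\|_{L^\infty(B_1)}=1$, still convex, vanishes on $\Omega_R=R^{-1}\Omega$, and satisfies the same equations (the class $\mathcal L_*$ is scale invariant, and the right-hand sides scale favorably because $\alpha<s$). The growth bound \eqref{growthctrolgradient} gives a uniform-in-$R$ control $\|\nabla u_R\|_{L^\infty(B_\rho)}\le \rho^{s+\alpha}$ for $\rho\ge 1$. Hence along a subsequence $R_k\to\infty$ we get $u_{R_k}\to u_\infty$ in $C^1_{\mathrm{loc}}$, where $u_\infty$ is convex, nontrivial (because $\|\nabla u_\infty\|_{L^\infty(B_1)}=1$), solves the same system, has the same growth, and $\{u_\infty=0\}=\Sigma$ is a closed convex \emph{cone} with vertex at $0$ and nonempty interior (the cone property is the standard consequence of the monotone convergence of the dilated convex sets $\Omega_{R}$). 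The reduction is legitimate because if we prove $u_\infty(x)=C(e\cdot x)_+^{1+s}$ then $\Sigma=\{e\cdot x\le 0\}$ is a half-space; pulling this back, the original $\Omega$ is trapped between two parallel half-spaces, hence is itself a half-space, and then a second, cleaner blow-down/blow-up or a direct ODE analysis on the 1D profile identifies $u$ exactly. (Alternatively one argues directly that $u$ itself must be homogeneous; I would keep the two-step version since it is what the sketch describes.)

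\emph{The fat-cone case.} Now work with $u_\infty$ and $\Sigma$. If $\Sigma$ had empty interior we would be back in the degenerate case already excluded; so $\Sigma$ has nonempty interior. The goal is to show $\Sigma$ is a half-space. A dimension-reduction argument (blowing up $u_\infty$ at a point of $\partial\Sigma\setminus\{0\}$, using convexity to keep the structure and the growth to keep nontriviality, and inducting on $n$) shows that $\partial\Sigma$ is $C^1$ away from the origin. By convexity of $u_\infty$, for every $e$ in the (open, nonempty) cone of admissible directions we have $\partial_e u_\infty\ge 0$ in $\R^n$, $\partial_e u_\infty=0$ in $\Sigma$, and $L(\partial_e u_\infty)=0$ in $\R^n\setminus\Sigma$; moreover $\partial_e u_\infty>0$ in $\R^n\setminus\Sigma$ unless it vanishes identically there (interior Harnack). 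Pick two such directions $e_1,e_2$ for which the derivatives are not identically zero; Theorem \ref{thm-uniqueness} applies (the integrability $\int \partial_{e_i}u_\infty(y)(1+|y|)^{-n-2s}\,dy<\infty$ follows from the growth bound since $s+\alpha<\min\{1,2s\}$ makes the gradient grow slower than $|y|^{2s}$) and gives $\partial_{e_1}u_\infty\equiv K\,\partial_{e_2}u_\infty$. Since this holds for all pairs in an open cone of directions, $\nabla u_\infty(x)$ points in a fixed direction $e$ for every $x$, and the level sets of $u_\infty$ are parallel hyperplanes orthogonal to $e$; hence $\Sigma=\{e\cdot x\le 0\}$ and $u_\infty=g(e\cdot x)$ for a convex increasing $g\ge 0$ with $g=0$ on $(-\infty,0]$.

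\emph{Identifying the profile.} Finally, the 1D function $g$ solves $L_{1}g=0$ on $(0,\infty)$ for the one-dimensional operator obtained by integrating the kernel over hyperplanes; the homogeneity and the growth $g(t)\le C t^{1+s}$, together with $g(t)=0$ for $t\le 0$, pin down $g(t)=C t_+^{1+s}$ — the unique (up to constant) $(1+s)$-homogeneous solution, which is the classical fact that $(x_+)^{1+s}$ is ``$s$-harmonic in a half-line'' (here one uses $1+s<2$ so the growth exponent is admissible and $1+s>s$ so the solution is nontrivial). This gives $u_\infty(x)=C(e\cdot x)_+^{1+s}$, completing the cone case and, by the trapping argument above, the theorem.

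\emph{Main obstacle.} The delicate point is the dimension-reduction step establishing $C^1$ regularity of $\partial\Sigma$ away from the vertex: one must show that blow-ups at lateral boundary points are lower-dimensional solutions of the same type, that the growth and nontriviality survive, and that a flat cone in all dimensions is forced — the induction base $n=1$ being trivial. The rest is comparatively routine given Theorems \ref{thm-uniqueness} and the compactness/Liouville tools; verifying the integrability hypothesis of Theorem \ref{thm-uniqueness} and the convergence of the nonlocal equations under blow-down are where the constraint $\alpha<s$ (equivalently $s+\alpha<2s$) is used.
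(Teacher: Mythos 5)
Your route is the same as the paper's (blow-down to a convex cone, dichotomy between thin and fat cones, boundary-Harnack/uniqueness in $C^1$ cones via Theorem \ref{thm-uniqueness}, then the 1D classification), but the treatment of the thin case contains two genuine gaps. First, the assertion that ${\rm int}\,\Omega=\emptyset$ lets you read $L(\nabla u)=0$ as holding ``in $\R^n$ minus a set of measure zero'' and then invoke a Liouville theorem componentwise is unjustified: the equation is only known in $\R^n\setminus\Omega$ in the viscosity sense, and null sets are not automatically removable for viscosity solutions. This removability is exactly what Proposition \ref{propclassif2} supplies: it works with the increments $v=u-u(\cdot-h)$ (whose convexity produces a wedge of a definite sign on the thin set), subtracts $\varepsilon$ times the exponential supersolution of Lemma \ref{supersol-fully} so that test functions cannot touch on $\Sigma$, concludes $Lv=0$ in all of $\R^n$, hence $v$ affine and $u$ a quadratic polynomial, and only then uses convexity and the subquadratic growth $1+s+\alpha<2$ to get $u\equiv0$. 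Second, and more seriously, the sentence ``if $\Sigma$ had empty interior we would be back in the degenerate case already excluded'' is false: ruling out ${\rm int}\,\Omega=\emptyset$ does not rule out a thin blow-down cone. For instance $\Omega=\{x_n\le-|x'|^2\}$ has nonempty interior and $0\in\partial\Omega$, yet $\Sigma=\bigcap_k \Omega/R_k$ is a ray. This paraboloid-type scenario is precisely what the classification must exclude; in the paper it is excluded by applying Proposition \ref{propclassif2} to $u_\infty$ and contradicting the nondegeneracy $\|\nabla u_\infty\|_{L^\infty(B_1)}\ge\frac12$ of the blow-down. Your normalization $u_R(x)=u(Rx)/\bigl(R\,\|\nabla u\|_{L^\infty(B_R)}\bigr)$ does not deliver this: it preserves the growth control $\|\nabla u_R\|_{L^\infty(B_\rho)}\le\rho^{s+\alpha}$ only if $\|\nabla u\|_{L^\infty(B_R)}\ge R^{s+\alpha}$, which need not hold; one has to blow down along radii where the monotone quantity $\theta(R)=\sup_{R'\ge R}(R')^{-s-\alpha}\|\nabla u\|_{L^\infty(B_{R'})}$ is essentially attained, as the paper does.

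In the fat-cone case your argument matches the paper's Proposition \ref{propclassif}, but the step you yourself flag as the main obstacle --- $C^1$ regularity of $\partial\Sigma$ away from $0$ by dimension reduction --- is left entirely open, and it hides an ingredient you do not identify: nontriviality of the blow-up at a lateral point $z\in\partial\Sigma\cap\partial B_1$ does \emph{not} follow from the growth bound. One needs the nondegeneracy $\theta(r)\to\infty$ at $z$, which the paper proves by a barrier argument ($\partial_e u\ge c\,d^s$ near $z$, built from the solution $w_0\ge c\,d^s$ in a tangent ball plus a bump function); without it the rescalings at $z$ could converge to $0$ and the induction gives nothing. Finally, two small repairs: ``$\Omega$ trapped between two parallel half-spaces, hence a half-space'' is not quite an argument --- the clean statement is that $\Sigma=\{e\cdot x\le0\}\subset\Omega$ together with convexity and $0\in\partial\Omega$ forces $\Omega=\Sigma$ (otherwise $0$ would be interior to $\Omega$); and once $\Omega$ is a half-space the conclusion $u=K(e\cdot x)_+^{1+s}$ follows by applying Proposition \ref{propclassif} (whose 1D step rests on the cited classification $\phi'(t)=Kt_+^s$ from \cite{RS-stable}) to $u$ itself, with no second blow-down needed.
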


We will establish Theorem \ref{thmclassif} by using a blow-down argument combined with the following Proposition, which corresponds to the particular case in which $\Omega$ is a convex cone $\Sigma$ with nonempty interior.

\begin{prop}\label{propclassif}
Let $\Sigma$ be a closed convex cone in $\R^n$ with nonempty interior and vertex at $0$.
Then, Theorem \ref{thmclassif} holds for $\Omega = \Sigma$.
\end{prop}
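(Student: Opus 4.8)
The idea is to split into two cases according to the size of $\Sigma$, and in the genuinely $n$-dimensional case to use the uniqueness theorem of Section \ref{sec3} to force $\Sigma$ to be a half-space. First I would dispose of the degenerate case: if $\Sigma$ has nonempty interior but $\Sigma$ is not all of $\R^n$, one still has to rule out that $\Sigma$ is ``too thin''. Actually the statement assumes $\Sigma$ has nonempty interior, so the relevant dichotomy here is: either $\Sigma = \R^n$, in which case $u\equiv 0$ in $\R^n$ and we are done; or $\Sigma\subsetneq\R^n$ is a proper closed convex cone with nonempty interior, and we must show it is a half-space with $u(x)=C(e\cdot x)_+^{1+s}$.

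\textbf{Step 1: the cone of nonnegative derivatives.} By the convexity of $u$ (i.e.\ $D^2u\ge0$) together with the equation $L(u-u(\cdot-h))\ge0$ in $\R^n\setminus\Sigma$, every directional derivative $w_e:=\partial_e u=\lim_{t\to0^+}t^{-1}(u(\cdot+te)-u(\cdot))$ is nonnegative in $\R^n$, vanishes on $\Sigma$, and satisfies $Lw_e\ge0$ in $\R^n\setminus\Sigma$; moreover from $L(\nabla u)=0$ in $\R^n\setminus\Sigma$ we actually get $Lw_e=0$ there. The growth control \eqref{growthctrolgradient} guarantees $\int_{\R^n} w_e(y)(1+|y|)^{-n-2s}\,dy<\infty$ since $s+\alpha<2s$. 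So each nonzero $w_e$ is a candidate for the uniqueness theorem, \emph{provided} $\partial\Sigma$ is $C^1$ away from the origin. The plan is therefore: (a) show $\partial\Sigma$ is $C^1$ away from $0$; (b) apply Theorem \ref{thm-uniqueness} to conclude all nonzero $w_e$ are proportional; (c) deduce $\Sigma$ is a half-space; (d) integrate to identify $u$.

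\textbf{Step 2: $C^1$ regularity of $\partial\Sigma$ via dimension reduction.} This is the part I expect to be the main obstacle. The strategy is a standard convex-cone dimension reduction: if $\partial\Sigma$ fails to be $C^1$ at some $x_1\in\partial\Sigma\setminus\{0\}$, blow up at $x_1$ to obtain a solution $\tilde u$ on a cone $\Sigma_1$ which splits off a line (since $\Sigma$ is a cone, $\Sigma_1$ contains the line $\R x_1$), hence is of the form $\Sigma' \times \R$ with $\Sigma'\subset\R^{n-1}$ a convex cone; the blow-up $\tilde u$ is independent of the $x_1$-direction and solves the same problem in one dimension lower, with the same growth. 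Iterating, one reduces to dimension $2$ (or to the case $\Sigma'=\{0\}$, treated below), where a proper convex cone with nonempty interior in $\R^2$ whose boundary is not $C^1$ at a point off the origin is a wedge with a corner, and one rules this out directly: near such a corner the solution would have to behave like the corresponding fractional power which is incompatible either with the subsolution inequality $Lw_e\ge0$ or with nonnegativity (alternatively, one shows the blow-up at the corner is a nonzero solution in a half-plane, which must be 1D, contradicting that the cone actually had a corner). The reduction to the case in which $\Sigma'$ has empty interior must also be handled: if after reduction $\Sigma'=\{0\}\subset\R^{m}$ with $m\ge1$, then $w_e\ge0$, $w_e=0$ only at $0$, $Lw_e=0$ in $\R^m\setminus\{0\}$, and the growth $|w_e|\le C|x|^{s+\alpha}$ with $s+\alpha<2s$ forces, by a Liouville-type argument (a point has zero capacity for $L$ when $2s<m$... but here $m$ may be $1$), that $w_e$ is a global solution of $Lw_e=0$ in $\R^m$ with subquadratic growth, hence a polynomial of degree $\le 1$; since $w_e\ge0$ and $w_e(0)=0$ this gives $w_e$ linear, so $u$ is a quadratic polynomial, contradicting \eqref{growthctrolgradient} since $1+s+\alpha<2$ would be violated unless $u\equiv0$ — but $u\equiv0$ means $\Sigma=\R^n$, excluded. (When $m=1$ one argues by hand with the explicit one-dimensional kernel.)

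\textbf{Step 3: conclusion.} Once $\partial\Sigma$ is $C^1$ away from $0$, fix two directions $e,e'$ with $w_e,w_{e'}\not\equiv0$ (these exist since $u\not\equiv0$); both solve \eqref{eqn1space} with the cone $\Sigma$, so by Theorem \ref{thm-uniqueness} $w_{e'}=K(e,e')w_e$ for a positive constant. Letting $e'$ vary over a neighborhood of $e$ on $S^{n-1}$ and using that $w_{e'}$ depends linearly on $e'$ (since $w_{e'}=\partial_{e'}u$), one concludes that $\nabla u(x) = \psi(x)\, \nu$ for a fixed unit vector $\nu$ and a scalar function $\psi\ge 0$ vanishing exactly on $\Sigma$; hence $u$ depends only on $\nu\cdot x$, say $u(x)=g(\nu\cdot x)$ with $g\ge0$ convex, $g=0$ on $\{\nu\cdot x\le 0\}$ (which must then equal $\Sigma$, a half-space) and $g\ge0$ increasing. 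Finally the one-dimensional equation $Lg=0$ on $\{t>0\}$ with $g\equiv 0$ on $\{t\le0\}$, $g\ge0$, and growth $g(t)\le Ct^{1+s+\alpha}$, $1+s+\alpha<2$, is solved only by $g(t)=C t_+^{1+s}$ (the function $t_+^{1+s}$ is, up to constants, the unique such solution — this is the classical one-dimensional computation $(-\Delta)^s(x_+^s)=0$ differentiated/integrated once, and uniqueness follows from the $\mathcal L_*$ ellipticity together with the growth bound via a maximum-principle argument as in Lemma \ref{lemB1}). This gives $u(x)=C(\nu\cdot x)_+^{1+s}$ and $\Sigma=\{\nu\cdot x\le0\}$, completing the proof. $\hfill\square$
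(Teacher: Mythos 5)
Your overall strategy coincides with the paper's: differentiate $u$ in directions entering the cone, use the uniqueness result (Theorem \ref{thm-uniqueness}) once $\partial\Sigma$ is known to be $C^1$ away from $0$ to force one-dimensional symmetry, and finish with the 1D classification; your Step 3 is essentially the paper's argument (up to the minor point that $\partial_e u\ge 0$ only holds for directions $e$ with $-e\in\Sigma$, not for every $e\in S^{n-1}$). The genuine gap is in your Step 2, which is the heart of the proposition. First, your blow-up at a non-$C^1$ point $x_1\in\partial\Sigma\setminus\{0\}$ only yields a contradiction if the blow-up limit is \emph{not} identically zero: the zero function vanishes on every cone, so a trivial limit carries no information. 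Guaranteeing nondegeneracy is exactly where the paper works hardest: it proves that $\theta(r)=\sup_{r'\ge r}(r')^{-s-\alpha}\|\nabla u\|_{L^\infty(B_{r'}(z))}\to\infty$ by a barrier argument (a ball touching $\Sigma$ at $z$, the supersolution/subsolution of \cite{RS-K}, and interior Harnack, giving $\partial_e u\ge c\,d^s$ near $z$), and only then does the $\theta$-normalized blow-up, compact by the $C^{1,\tau}$ estimate of Proposition \ref{first-regularity-u}, converge to a nontrivial solution still obeying \eqref{growthctrolgradient}. Your sketch silently assumes a nontrivial limit ``with the same growth''; as written, the step would fail.

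Second, the iteration you propose does not close. Non-differentiability of $\partial\Sigma$ at $x_1$ means the tangent cone $\R x_1\times\Sigma'$ is not a half-space, i.e.\ $\Sigma'\subset\R^{n-1}$ is not a half-space; but this does \emph{not} produce a point of $\partial\Sigma'$ off its vertex where $\partial\Sigma'$ fails to be $C^1$ (a circular cone is not a half-space yet is smooth away from the vertex), so you cannot keep blowing up at ``corners''. Moreover the dimension-2 endgame is moot: in $\R^2$ the boundary of a convex cone with vertex at $0$ consists of two rays and is automatically $C^1$ away from the origin, so there is no off-origin corner to exclude --- $n=2$ is precisely the base case where the uniqueness theorem applies directly. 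The correct structure (the paper's) is an induction on the dimension: the nondegenerate blow-up at $x_1$ is translation invariant along $x_1$ by Lemma \ref{convex-1D}, hence is an $(n-1)$-dimensional solution vanishing on $\Sigma'$, which automatically has nonempty interior because the tangent cone at $x_1$ contains $\Sigma-x_1$; one then invokes the \emph{full classification} in dimension $n-1$ (the induction hypothesis), not a corner-exclusion, to conclude $\Sigma'$ is a half-space, so $\partial\Sigma$ is differentiable at every $x_1\neq0$ and, by convexity, $C^1$ away from $0$. For the same reason your case $\Sigma'=\{0\}$ never occurs, and the removability/Liouville argument you sketch for it (which you yourself flag as shaky, e.g.\ for $m=1$) is not needed; the empty-interior situation is only relevant in the blow-down argument for Theorem \ref{thmclassif}, where it is handled by Proposition \ref{propclassif2}.
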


To prove the Proposition we will need the following.

\begin{lem}\label{convex-1D}
Let $u: \R^n \rightarrow \R$ be a convex function such that the set $\{u=0\}$ contains the straight line $\{te'\,:\, t\in \R\}$,  $e'\in S^{n-1}$.
Then, $u(x+te')=u(x)$ for all $x\in \R^n$ and all $t\in\R$.
\end{lem}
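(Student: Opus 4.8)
The plan is to reduce the statement to a one-dimensional convexity fact. Fix a point $x\in\R^n$ and restrict $u$ to the two-dimensional affine plane $\Pi$ spanned by $e'$ and $x$ (if $x$ is parallel to $e'$ the claim is trivial, since then $x+te'$ lies on the line $\{te'\}$ where $u$ vanishes). On $\Pi$, for each fixed parameter transverse to $e'$, consider the function $g(t)=u(x+te')$. This is a convex function of $t\in\R$.

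The key observation is that a convex function $g:\R\to\R$ which is bounded below (here $g\ge 0$, since $u\ge 0$ is not assumed in this lemma — but convexity alone gives the argument if we use the line of zeros) must be monotone or have a minimum; more precisely, the slope $g'$ is nondecreasing. First I would show $g$ is bounded on all of $\R$: indeed, by convexity of $u$, for any $x$ the segment joining $x$ to a point $2\tau e'$ on the zero line and the reflected point $-2\tau e'$ gives $u(x)\le \tfrac12 u(x+2\tau e' \cdot c) + \cdots$; more cleanly, since the whole line $\{te'\}\subset\{u=0\}$, for each $t$ the midpoint identity along the line through $x+te'$ in direction $e'$ shows that $g$ cannot grow in either direction: if $g(t_0)>g(t_1)$ for some $t_0<t_1$ then by convexity $g$ is strictly decreasing for $t<t_0$, but comparing with the zero values on the parallel line $\{se'\}$ via convexity of $u$ in the plane $\Pi$ forces $g$ to stay bounded, hence $g'$ being monotone and $g$ bounded implies $g'\equiv 0$, i.e. $g$ is constant.

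Concretely, the cleanest route: the function $x\mapsto u(x)$ is convex and vanishes on the line $\ell=\R e'$. For fixed $x$, define $\psi(t)=u(x+te')$, convex in $t$. Pick any $R$; the points $\pm R e'$ lie in $\{u=0\}$. By convexity of $u$ along the segment from $-Re'$ to $x+te'$ and from $x+te'$ to $Re'$ — i.e. writing $x+te'$ as a convex combination appropriately is not immediate, so instead use that $x+te' = \tfrac12\big((x+te'-Re')+(x+te'+Re')\big)$ and that $u$ is convex to get $\psi(t)\le \tfrac12\psi(t-R)+\tfrac12\psi(t+R)$, which is just reconvexity and gives nothing new. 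The honest argument: a convex function on $\R$ whose graph stays below the line through two of its points with equal heights (here both heights $0$, at $t=\pm R$ after translating so that $\ell$ passes appropriately) — but $\psi$ need not vanish anywhere. So I would instead invoke: $u$ convex and $u|_\ell\equiv 0$ means $\ell$ is contained in a face; more directly, for $v(x):=u(x+e')-u(x)$, convexity of $u$ gives that $v$ is ``monotone along $e'$'' in the sense $v(x+e')\ge v(x)$, and summing, $u(x+ke')\ge u(x)+k\,v(x)$ for $k\ge 1$; evaluating along $x$ on the line $\ell$ would force $v=0$ there, and then a limiting/convexity argument propagates $v\equiv 0$ everywhere.

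The main obstacle I anticipate is making the propagation from ``$v=0$ on $\ell$'' to ``$v\equiv 0$ on $\R^n$'' fully rigorous without circularity: the clean way is the purely one-dimensional statement that a convex function $\psi:\R\to\R$ with $\psi\ge \psi(t_*)+c$ impossible unless... — precisely, \emph{if $\psi$ is convex on $\R$ and there exist $t_1<t_2$ with $\psi(t_1)\le \psi(t_2)$, then $\psi$ is nondecreasing on $[t_2,\infty)$}, and the symmetric statement; combined with $\psi$ being bounded (which follows because for the plane $\Pi$ through $x$ and $e'$, convexity of $u$ together with $u=0$ on the line $\ell\subset\Pi$ forces $u|_\Pi$ to be bounded on every strip of bounded width around $\ell$ — a strip containing $x+te'$ for all $t$), we conclude $\psi$ is constant, which is exactly $u(x+te')=u(x)$. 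I would write this up by first isolating the 1D lemma, then verifying the boundedness of $\psi$ from convexity and the vanishing on $\ell$, then concluding.
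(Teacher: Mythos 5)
Your overall strategy---reduce to the one-variable convex function $\psi(t)=u(x+te')$, show it is bounded above on all of $\R$, and conclude it is constant---is sound and genuinely different from the paper's argument (the paper notes that any supporting hyperplane $p(y)=a\cdot y+b\le u$ must have $a\cdot e'=0$, since otherwise $p$ would be unbounded above on the line $\{te'\}$ where $u$ vanishes; hence every subgradient is orthogonal to $e'$ and $u$ is constant along lines parallel to $e'$). However, as written your proposal never actually establishes the one step that carries all the weight: the boundedness of $\psi$. You assert that convexity plus $u=0$ on $\ell$ forces $u|_\Pi$ to be bounded on strips around $\ell$, but the computations you attempt toward this are either garbled or, as you yourself note, ``give nothing new'' (the midpoint identity centered at $x+te'$ is indeed vacuous), and the detour via $v=u(\cdot+e')-u$ does not propagate vanishing off the line without the circularity you acknowledge. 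So the proof, as proposed, has a genuine gap at its central step.

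The gap is easily filled, and with the right convex combination your route becomes a two-line proof: write $x+te'=\tfrac12(2x)+\tfrac12(2te')$, so convexity and $u(2te')=0$ give $\psi(t)=u(x+te')\le\tfrac12 u(2x)$ for every $t\in\R$. Thus $\psi$ is a convex function on $\R$ that is bounded above, hence constant (a nonconstant convex function on $\R$ has nonzero slope somewhere and then grows linearly to $+\infty$ in one direction), so $\psi(t)\equiv\psi(0)=u(x)$. The point you were missing is to let the point on the zero line move with $t$ (namely $2te'$) while anchoring at a fixed point off the line (namely $2x$), instead of centering the combination at $x+te'$ itself; no restriction to a plane and no discussion of $g\ge0$ is needed. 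Completed this way, your argument is just as elementary as the paper's subgradient argument, and, like it, uses neither $u\ge0$ nor any regularity of $u$ beyond convexity.
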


\begin{proof}
Let $p(x)= ax+b$ be a supporting hyperplane of the epigraph of $u$. Since  $\{u=0\}$ contains a straight line parallel to $e$ it must be $a\cdot e'=0$, since otherwise
\[0 = u(te') \ge p(te') =  t(a\cdot e') +b\]
would be violated by taking $t = C (a\cdot e') $, with $C>0$ large. Thus, every vector belonging to the sub-differential of $u$ at some point in $\R^n$ is orthogonal to $e'$, and this means that $u$ is constant on lines that are parallel to $e'$.
\end{proof}

We give now the:

\begin{proof}[Proof of Proposition \ref{propclassif}]
We prove it by induction on the dimension $n$.
We divide it into two steps.

{\em Step 1}. We show first that the proposition holds when $\Sigma$ is $C^1$ away from $0$  ---in particular for the case $n=2$.

Since $\Sigma$ has nonempty interior we may choose $n$ linearly independent vectors $e_i$, $i=1,2,...,n$, such that $-e_i \in \Sigma$ and $|e_i|=1$.
Then, we consider
\[ v_i = \partial_{e_i} u,\]
and notice that they solve
\begin{equation}\begin{cases}
L v_i = 0  \quad &\mbox{in } \R^n\setminus \Sigma\\
v_i=  0 & \mbox{in }\Sigma\\
v_i\ge 0 & \mbox{in } \R^n.
\end{cases}\end{equation}
The non-negativity condition $v_i\ge0$ follows from the convexity $D^2 u \ge 0$ and the fact that $-e_i\in \Sigma = \{u=0\}$.

Now, since $\Sigma$ is $C^1$ away from $0$, it follows from Theorem \ref{thm-uniqueness} that the linear space of functions
\[ \left\{ \sum_{i=1}^n \lambda_i v_i \ : \ (\lambda_1, \lambda_2, \dots,\lambda_n) \in \R^n\right\} \]
has dimension at most one.
This means that $v_i=\alpha_iv_k$ for some $1\leq k\leq n$ and for all $i=1,...,n$, and thus $\partial_{e_i-\alpha_ie_k}u\equiv0$ in $\R^n$ for all $i\neq k$.
It follows that $u$ has $1-D$ symmetry, that is, $u= \phi(e\cdot x)$ for some $e\in S^{n-1}$ and $\phi:\R\rightarrow \R$.
In particular, $\Sigma = \{e\cdot x\le 0\}$, where we have used that 0$\in \partial \Sigma$.

But then $\phi\in C^1$ is a nonnegative solution of $L (\phi')=0$ in $\R_+$, $\phi=0$ in $\R^-$, satisfying $\phi'(t)\leq C(1+t^{s+\alpha})$, with $\alpha\in(0,s)$.
It follows from Theorem 4.1 in \cite{RS-stable} that $\phi'(t)=K(t)_+^s$, and thus $\phi(t)=K(t)_+^{1+s}$ for some $K\geq0$.
Hence,
\[ u(x) = K(e\cdot x)_+^{1+s},\]
as desired.

{\em Step 2}. We show next by induction on the dimension that the cone $\Sigma$ will be $C^1$ away from $0$, and hence that we can always apply Step 1.

Assume that the statement of the proposition holds true up to dimension $n-1$.
Then, we will prove that convex cone $\Sigma\subset\R^n$ must be $C^1$ away from $0$.
More precisely, we will show that for any $z\in \partial\Sigma\cap \partial B_1$ the blow-up of $\Sigma$ at the point $z$ is a half-space. This, together with the fact that $\Sigma$ is convex will imply that $\partial\Sigma$ is $C^1$ away from $0$.

Let us consider a blow-up sequence at points $z\in \partial B_1\cap \partial\Sigma$.
For $r>0$, we define
\[ \theta(r)  = \sup_{r'\ge r} \frac{ \|\nabla u\|_{L^\infty(B_{r'}(z))}}{(r')^{s+\alpha}}.\]
Note that $\theta(r) < \infty$ for all $r>0$ thanks to the growth control \eqref{growthctrolgradient}.

Moreover, we claim that
\begin{equation}\label{claim-theta-infty}
\theta(r)\rightarrow\infty\quad \textrm{as}\quad r\rightarrow0.
\end{equation}
Indeed, let $B$ be a ball of radius 1 such that $B\subset\R^n\setminus\Sigma$ and $z\in \partial B$ (recall $\Sigma$ is convex), and let $w_0$ be the solution of $Lw_0=-1$ in $B$, $w_0=0$ in $\R^n\setminus B$.
By the results of \cite{RS-K}, we have that $w_0\geq cd^s$ for some $c>0$, where $d(x)={\rm dist}(x,\R^n\setminus B)$.
Let $K\subset\subset B$ be any compact set in $B$, and $\eta\in C^\infty_c(K)$ be such that $0\leq \eta\leq1$ and $\int_K\eta>0$.
Then, the function $\phi=w_0+C\eta$ satisfies $L\phi\geq0$ in $B\setminus K$, $\phi\equiv0$ in $\R^n\setminus B$, and $\phi\leq C$ in $K$.

Let now $e\in S^{n-1}$ be such that $-e\in \Sigma$.
Then, $\partial_e u\geq0$ in $\R^n$, and by the Harnack inequality $\partial_e u\geq c>0$ in $K$.
Therefore, we may use $\epsilon\phi$ as a subsolution to find that $\partial_e u\geq\epsilon\phi$ in $B$, and in particular $\partial_eu \geq c d^s$ in $B$ for some small constant $c>0$.
Hence, $\|\nabla u\|_{L^\infty(B_r(z))}\geq cr^s$ for all $r\in (0,1)$, and this yields \eqref{claim-theta-infty}.

Now, thanks to \eqref{claim-theta-infty}, for all $m\in \mathbb N $ there  are $r'_m\ge 1/m$ and $z_m\in \partial B_1\cap \partial\Sigma$ such that $r_m'\rightarrow0$ and
\[ \ (r'_m)^{-s-\alpha} \|\nabla u\|_{L^\infty(B_{r'_m}(z))} \ge \frac{\theta(1/m)}{2} \ge \frac{\theta(r'_m)}{2}.\]
Then the blow-up sequence
\[ u_m(x) := \frac{u(z+r'_m x)}{(r'_k)^{1+s+\alpha}\theta(r'_k)}\]
satisfies the growth control
\[   \|\nabla u_m\|_{L^\infty(B_R)} \le R^{s+\alpha}\quad \mbox{for all }R\ge 1 \]
and the ``nondegeneracy'' condition
\[  \|\nabla u_m\|_{L^\infty(B_1)}  \ge \frac 12.   \]

By the $C^{1,\tau}$ estimates of Proposition \ref{first-regularity-u} and the Arzel\`a-Ascoli theorem, the functions $u_m$ converge (up to a subsequence) locally uniformly in the $C^1$ topology to a function $u_\infty$ that
satisfies
\begin{equation}\label{growthuinfty1}
\|\nabla u_\infty\|_{L^\infty(B_{R})} \le R^{s+\alpha}\quad \mbox{for all }R\ge 1,
\end{equation}
\begin{equation}\label{gnondeguinfty1}
\|\nabla u_\infty\|_{L^\infty(B_{1})}  \ge \frac 12,
\end{equation}
and
\begin{equation}\begin{cases}
L (\nabla u_\infty) = 0  \quad &\mbox{in } \R^n\setminus \Sigma_\infty\\
L \bigl(u_\infty-u_\infty(\cdot-h)\bigr) \geq 0  \quad &\mbox{in } \R^n\setminus \Sigma_\infty\\
D^2 u_\infty \ge 0 \quad &\mbox{in } \R^n\\
u_\infty = 0 &\mbox{in }\Sigma_\infty,
\end{cases}\end{equation}
where $\Sigma_\infty$ is the blow-up of $\Sigma$ at $z\in \partial\Sigma\cap \partial B_1$.

Now, since $\Sigma$ is a cone with vertex at $0$ and $|z|=1$, the cone $\Sigma_\infty$ will satisfy
\[  \lambda e'+\Sigma_\infty = \Sigma_\infty \quad \mbox{for all }\lambda\in \R,\]
at least for one vector $e'\in S^{n-1}$ (just take $e'=z$).

But since $u_\infty\ge 0$ is convex and its zero level set $\Sigma_\infty$ is invariant under translations in the direction $e'$, then by Lemma \ref{convex-1D}
\[ u_\infty(\lambda e' +\,\cdot\,) \equiv u_\infty.\]
Thus, $u_\infty$ is a function of only $n-1$ affine variables and hence solves the same problem in dimension $n-1$.

It then follows from the induction hypothesis that $\Sigma_\infty$ is a half-space and that $u_\infty= K(e\cdot x)_+^{1+s}$ for some $e\in S^{n-1}$ and $K>0$ ---the fact that $K$ is not zero follows from \eqref{gnondeguinfty1}.

Thus, $\Sigma$ is a convex cone ,with vertex at $0$, with nonempty interior, and such that its blow up at every point $z\in \partial \Sigma$ with $1/2\le |z|\le 3/2$ is a half-space. Therefore, $\partial \Sigma$ is $C^1$ away from the origin. Indeed, since $\Sigma$ is convex, $\partial\Sigma$ is a convex graph locally. Namely, for some $\delta>0$ we have under the appropriate choice of coordinates,
\[ \Sigma \cap B_\delta (z_0) = \{ x_n > G(x')\} \cap B_\delta (z_0),\]
for all $z_0 \in \partial\Sigma$ with $|z_0|=1$, where with $G$ convex.

But since the blow-up of $\Sigma$ (of $G$) is a plane for all $z\in B_\delta(z_0)$ we find that $G$ is $C^1$ and thus $\partial \Sigma$ is $C^1$ near $z_0$.
\end{proof}

We next show the following.

\begin{prop}\label{propclassif2}
Let $\Sigma$ be a closed convex cone in $\R^n$ with empty interior and vertex at $0$.
Then, Theorem \ref{thmclassif} holds for $\Omega=\Sigma$.
\end{prop}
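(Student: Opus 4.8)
The plan is to show that when $\Sigma$ has empty interior, the only solution is $u\equiv 0$; the alternative conclusion of Theorem \ref{thmclassif} (a half-space) is automatically excluded since a half-space has nonempty interior. Since $\Sigma$ is a closed convex cone with empty interior, it is contained in a hyperplane through the origin; in particular $\R^n\setminus\Sigma$ is connected and contains a straight line. The key observation is that the complement of $\Sigma$ is so large that the equation $L(\nabla u)=0$ holds in almost all of $\R^n$, and the growth bound \eqref{growthctrolgradient} forces $\nabla u$ to be (at most) a constant, hence $u$ to be affine; combined with $u\ge 0$, $u=0$ on a set containing a line, and convexity, this will give $u\equiv 0$.

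First I would argue that each component $v_i=\partial_i u$ satisfies $Lv_i=0$ in $\R^n\setminus\Sigma$ (from the first line of \eqref{eqnliouville}) and is continuous across $\Sigma$. Because $\Sigma$ lies in a hyperplane, it has zero Newtonian/$2s$-capacity relative to $\R^n$ in the sense relevant for these nonlocal equations — more concretely, one can cover $\Sigma\cap B_R$ by a thin neighborhood and use that bounded viscosity solutions of $Lv=0$ in $\R^n\setminus\Sigma$ that are continuous up to $\Sigma$ are in fact solutions in all of $\R^n$ (removability of lower-dimensional sets for the class $\mathcal L_*$). Granting this, $v_i$ solves $Lv_i=0$ in all of $\R^n$ with the growth $|v_i(x)|\le C(1+|x|^{s+\alpha})$ and $s+\alpha<\min\{1,2s\}$. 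The Liouville theorem for the class $\mathcal L_*$ (as in \cite{RS-stable}, Theorem 4.1, or the classification of entire solutions with subcritical growth) then forces $v_i$ to be constant. Hence $\nabla u$ is constant and $u(x)=a\cdot x+b$ is affine.

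Now I use the remaining constraints. Since $u\ge 0$ on $\R^n$ and $u$ is affine, necessarily $a=0$, so $u\equiv b$ is constant; and since $u=0$ on $\Sigma\ne\emptyset$ we get $b=0$, i.e. $u\equiv 0$. (Alternatively, without invoking a removability statement, one can argue directly: $\R^n\setminus\Sigma$ contains a full line $\{te'\}$ on which $u=0$, so by Lemma \ref{convex-1D} $u$ is independent of $e'$; iterating this over a basis of the hyperplane containing $\Sigma$, together with the fact that $\Sigma$ itself is contained in that hyperplane and $0\in\partial\Sigma$, reduces matters to a one-dimensional convex nonnegative function vanishing at an interior-type point, which must be identically zero — but the cleanest route is the removability plus Liouville argument above.)

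The main obstacle I anticipate is making precise the removability claim: that a bounded (here, at most polynomially growing) solution of $L v=0$ in $\R^n\setminus\Sigma$, continuous across $\Sigma$, is a global solution when $\Sigma$ is a lower-dimensional convex cone. For the fractional Laplacian this is classical (sets of zero $H^s$-capacity are removable, and hyperplanes have zero capacity when $2s<1$; when $2s\ge 1$ one needs the extra structure that $u$ and $\nabla u$ are continuous). For general $L\in\mathcal L_*$ one should instead run a barrier/penalization argument: approximate from outside by smooth domains $\Sigma_\varepsilon\supset\Sigma$ with $|\Sigma_\varepsilon|\to 0$, use the boundary regularity of \cite{RS-K} to control $v$ near $\partial\Sigma_\varepsilon$, and pass to the limit. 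Once that technical point is settled, the rest is immediate from the Liouville theorem and the sign/convexity constraints.
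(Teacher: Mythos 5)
Your overall skeleton (reduce to a hyperplane-contained cone, upgrade the equation to all of $\R^n$, apply a Liouville theorem, then use $u\ge0$ and convexity) is the right one, but the step you yourself flag as the main obstacle --- removability of $\Sigma$ for solutions of $Lv=0$ that are merely continuous across it --- is a genuine gap, and for $s>1/2$ it is false at the level of generality you invoke it. The function $w(x)=|e\cdot x|^{2s-1}$ is H\"older continuous, satisfies $Lw=0$ outside the hyperplane $\{e\cdot x=0\}$ for every $L\in\mathcal L_*$ (acting on functions of $e\cdot x$ alone, $L$ is a one-dimensional stable operator and $|t|^{2s-1}$ is its fundamental solution up to a constant), and has growth $O(|x|^{2s-1})=o(|x|^{s+\alpha})$, yet it is not an entire solution: $Lw$ is a singular measure on the hyperplane. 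Since you apply removability to $v_i=\partial_i u$, which is only known to be $C^{0,\tau}$ (with $\tau$ small, possibly $\le 2s-1$), nothing rules out exactly this kind of cusp along $\Sigma$; the ``extra structure that $u$ and $\nabla u$ are continuous'' which you mention for the case $2s\ge1$ is not available, because $\nabla v_i=\nabla\partial_i u$ is not known to exist, let alone be continuous. The suggested approximation by smooth domains $\Sigma_\varepsilon$ plus the boundary estimates of \cite{RS-K} is a hope rather than an argument. Your parenthetical alternative is also flawed: $u$ vanishes on $\Sigma$, not on a line in $\R^n\setminus\Sigma$, and a lower-dimensional convex cone need not contain any full line (it can be a single ray), so Lemma \ref{convex-1D} does not apply as stated.

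The paper circumvents precisely this difficulty by never working with $\nabla u$ across $\Sigma$. It works with the first-order increments $v=u-u(\cdot-h)$, which are $C^{1,\tau}$ (Proposition \ref{first-regularity-u} rescaled), and uses the supersolution $\phi(x)=\exp(-|e\cdot x|)$ of Lemma \ref{supersol-fully}, which has a positive wedge on the hyperplane containing $\Sigma$: since $v_\varepsilon=v-\varepsilon\phi$ then has a negative wedge on $\Sigma$, it cannot be touched from above by a $C^2$ function there, so the viscosity inequality $Lv_\varepsilon\ge -C\varepsilon$ holds in all of $\R^n$, and letting $\varepsilon\downarrow0$ and replacing $h$ by $-h$ gives $Lv=0$ in $\R^n$. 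The growth bound $\|v\|_{L^\infty(B_R)}\le|h|R^{s+\alpha}$ with $s+\alpha<2s$ then makes each increment affine, so $u$ is a quadratic polynomial, and convexity, nonnegativity, $u(0)=0$ and the subquadratic growth $1+s+\alpha<2$ force $u\equiv0$. In short: the $C^1$ regularity of $u$ (hence of its increments) is exactly the structural information that replaces your removability claim, and it is used through the wedge barrier rather than through any capacity argument. Your approach as written would only be salvageable for $2s<1$, where the hyperplane indeed has zero capacity; to cover all $s\in(0,1)$ you need an argument of the paper's type.
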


We will need the following supersolution.

\begin{lem}\label{supersol-fully}
Let $s\in(0,1)$, and $L\in \mathcal L_*$.
Given $e\in S^{n-1}$, the function $\phi(x)=  \exp\bigl(-| e\cdot x|\bigr)$ is a viscosity supersolution of
\[ L\phi \le C \quad\textrm{in}\ \R^n\]
and satisfies the inequality pointwise.
The constant $C$ depends only on $n$, $s$, and ellipticity constants.
\end{lem}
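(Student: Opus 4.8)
The plan is to verify the bound $L\phi\le C$ directly for the one-dimensional profile $\psi(t)=e^{-|t|}$, and then transfer it to $\phi(x)=\psi(e\cdot x)$ by the usual slicing of the kernel. First I would write, for any $L\in\mathcal L_*$,
\[
L\phi(x)=\int_{\R^n}\Bigl(\tfrac{\phi(x+y)+\phi(x-y)}{2}-\phi(x)\Bigr)\frac{\mu(y/|y|)}{|y|^{n+2s}}\,dy,
\]
and integrate first over the hyperplanes $\{y\cdot e=\text{const}\}$. Since $\phi(x+y)$ depends on $y$ only through $t=e\cdot y$, and $\mu$ is bounded above and below, the inner integral produces, up to a factor comparable to $1$ (times a constant depending on $n$, $s$, $\lambda$, $\Lambda$), a one-dimensional kernel of the form $c_{n,s}|t|^{-1-2s}$ with $\lambda c\le c_{n,s}\le\Lambda c$. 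Thus it suffices to bound
\[
\int_{\R}\Bigl(\tfrac{\psi(r+t)+\psi(r-t)}{2}-\psi(r)\Bigr)\frac{dt}{|t|^{1+2s}}
\]
uniformly in $r\in\R$, where $\psi(t)=e^{-|t|}$.

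The key point is that $\psi$ is smooth and strictly concave away from $t=0$, and has a downward corner at $t=0$ (the second ``distributional derivative'' has a negative Dirac mass there), so the only worry is the contribution near the singularity of the kernel. Away from $r=0$, say $|r|\ge\delta$, one splits the integral into $|t|\le\delta/2$ and $|t|>\delta/2$: on the first piece $\psi$ is $C^2$ with $|\psi''|\le1$, so the integrand is $O(t^2)$ and the integral converges; on the second piece one uses $0\le\psi\le1$ and $\int_{|t|>\delta/2}|t|^{-1-2s}\,dt<\infty$. Near $r=0$ the relevant observation is that $\tfrac{\psi(r+t)+\psi(r-t)}{2}-\psi(r)\le 0$ whenever $r$ and $t$ have the appropriate sign relation — more precisely, since $e^{-|\cdot|}$ is concave on each half-line and its corner points \emph{downward}, the symmetric second difference is nonpositive except possibly in the narrow wedge where $r+t$ and $r-t$ straddle $0$; but there one estimates crudely by $C|t|$ for $|t|\le 2|r|$ (using Lipschitz continuity of $\psi$), giving $\int_{|t|\le 2|r|}|t|\cdot|t|^{-1-2s}\,dt\le C|r|^{1-2s}$, which is bounded for $|r|\le1$ when $s<\tfrac12$, and for $s\ge\tfrac12$ one instead notes the wedge contribution is actually negative and the positive part is absorbed. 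A cleaner way to organize this: check that $L\phi$ is bounded above by comparing $\phi$ with the explicit supersolution behaviour, i.e. show $-(-\Delta_{1})^{s}\psi\le C$ where $(-\Delta_1)^s$ is the one-dimensional fractional Laplacian, which is classical since $\psi\in C^\infty(\R\setminus\{0\})\cap C^{0,1}(\R)$ decays and is concave away from the origin.

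For the viscosity statement one checks that $\phi$, being $C^2$ away from the hyperplane $\{e\cdot x=0\}$ and Lipschitz across it with a concave corner, can be touched from above at a point $x_0$ with $e\cdot x_0=0$ only by paraboloids, and for any such test paraboloid $p$ the nonlocal evaluation of $p$ (locally) plus $\phi$ (globally) again reduces to the same one-dimensional estimate since replacing $\phi$ by a larger $C^2$ function near $x_0$ only decreases the second-difference quotient; hence the pointwise bound and the viscosity bound coincide, and in fact the pointwise inequality holds everywhere with the caveat that at $\{e\cdot x=0\}$ one interprets $L\phi$ in the principal-value sense, which converges because $\psi$ is Lipschitz.

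The main obstacle I expect is not conceptual but bookkeeping: making the reduction from the $n$-dimensional kernel $\mu(y/|y|)|y|^{-n-2s}$ to a one-dimensional kernel rigorous, since after slicing in the direction $e$ one must integrate $\mu(y/|y|)|y|^{-n-2s}$ over the $(n-1)$-dimensional slice $\{e\cdot y=t\}$, and although this integral is comparable to $|t|^{-1-2s}$ by homogeneity and the bounds $\lambda\le\mu\le\Lambda$, one should carry the constants carefully so that the final $C$ depends only on $n,s,\lambda,\Lambda$ as claimed; once that slicing identity is in hand the remaining estimate is the standard one that $e^{-|t|}$ has bounded one-dimensional fractional Laplacian from below, which is elementary.
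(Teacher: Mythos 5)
Your overall route (slice the kernel of $L\in\mathcal L_*$ along $e$ to reduce to a one-dimensional kernel comparable to $|t|^{-1-2s}$, then bound the second differences of $\psi(t)=e^{-|t|}$) is viable, and it is genuinely different from the paper's proof, which avoids any corner analysis altogether: there one writes $\phi=\min\{\phi_1,\phi_2\}$ with $\phi_1=\min\{4,e^{-e\cdot x}\}$ and $\phi_2=\min\{4,e^{e\cdot x}\}$, checks classically that $L\phi_i\le C$ on the overlapping half-spaces $\{e\cdot x>-1\}$ and $\{e\cdot x<1\}$ (each $\phi_i$ is smooth with bounded Hessian near such points and globally bounded), and notes that wherever $\phi$ coincides with one of the $\phi_i$ it lies below it globally, so it inherits the inequality both pointwise and in the viscosity sense. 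However, your near-corner estimate contains a genuine error: $e^{-|t|}$ is strictly \emph{convex}, not concave, on each half-line, so the sign structure is exactly the opposite of what you claim. The symmetric second difference is \emph{nonnegative} (but of size $O(t^2)$, harmless since $|\psi''|\le 1$) when the segment $[r-t,r+t]$ does not cross the corner, and it is in the straddling region $|t|>|r|$ that negativity appears; the only positive contribution there is tiny: for $r>0$ and $t\ge r$ one has $\tfrac{\psi(r+t)+\psi(r-t)}{2}-\psi(r)=e^{-t}\cosh r-e^{-r}\le e^{-r}(\cosh r-1)\le r^2/2$, so the positive part integrates to $O(r^{2-2s})$, uniformly bounded. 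Your crude bound $C|t|$ on $\{|t|\le 2|r|\}$ gives $C|r|^{1-2s}$, which blows up as $r\to0$ when $s>\tfrac12$, and your fallback for $s\ge\tfrac12$ (``the wedge contribution is actually negative and the positive part is absorbed'') is asserted without proof and is inconsistent with your own sign claim; as written the argument does not close for $s\ge\tfrac12$.

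Two further inaccuracies. First, the principal value of $L\phi$ does \emph{not} converge at points of $\{e\cdot x=0\}$ when $s\ge\tfrac12$: there the second difference equals $e^{-|e\cdot y|}-1\approx -|e\cdot y|$, which is one-signed, so the integral diverges to $-\infty$ (harmless for the upper bound $L\phi\le C$, but Lipschitz continuity is not what makes it ``fine''). Second, your viscosity discussion tests with functions touching $\phi$ \emph{from above}, which is the subsolution test; for the supersolution inequality one tests from below, and since the corner of $\phi$ on $\{e\cdot x=0\}$ points downward, no $C^1$ function touches $\phi$ from below there, so the viscosity condition is vacuous at corner points and classical elsewhere. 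With the convexity corrected and the positive part in the straddling region estimated as above, your slicing proof does go through and yields a constant depending only on $n$, $s$, $\lambda$, $\Lambda$.
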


\begin{proof}
The function $\phi_1(x) = \min\{4, \exp\bigl(- e\cdot x \bigr)\}$ satisfies
\[ L\phi_1 \le C \quad \mbox{in }\{e\cdot x > -1\}\]
and the function $\phi_2(x) = \min\{4, \exp\bigl(e\cdot x \bigr)\}$ satisfies
\[ L\phi_2 \le C \quad \mbox{in } \{e\cdot x < 1\}.\]
It immediately follows that $\phi = \min \{\phi_1,\phi_2\}$ is a viscosity supersolution in all of $\R^n$ and satisfies the inequality pointwise.
\end{proof}

We now give the:

\begin{proof}[Proof of Proposition \ref{propclassif2}]
Since $\Sigma$ is convex and has empty interior it will be contained on some hyperplane $\Sigma = \{e\cdot u = 0\}$.
Using Proposition \eqref{first-regularity-u} (rescaled) at every ball $B_R$ and \eqref{growthctrolgradient} we obtain
\[ [u]_{C^{1,\tau}(B_R)} \le CR^{s+\alpha-\tau}\]
In particular $u \in C^{1,\tau}_{\rm loc} (\R^n)$.

Let us show now that, given $h\in \R^n$, the function $v:= u-u(\cdot-h)$ is a viscosity subsolution of $Lv\ge 0$ in all of $\R^n$.
Indeed, for $\varepsilon \in (0,1)$ consider $v_\varepsilon = v- \varepsilon\phi$, where $\phi$ is the supersolution of Lemma \ref{supersol-fully}.
Note that $\phi$ has a positive wedge on $\Sigma = \{e\cdot u = 0\}$.

Assume $x_0\in \R^n$ and that $w$ is $C^2$ in a neighborhood of $x_0$ and touches $v_\varepsilon$ by above at $x_0$.
Since $v\in C^{1,\tau}_{\rm loc}(\R^n)$ then $v_\varepsilon$ has a negative wedge on $\Sigma$ and thus it can not be touched by above by a $C^2$ function on $\Sigma$. Thus $x_0$ does not belong to $\Sigma$. Then, we use that $Lv \ge 0$ in $\R^n\setminus \Sigma$ to obtain
\[  L v_\varepsilon(x_0) \ge Lv(x_0)  - \varepsilon L\varphi(x_0) \ge 0 - C\varepsilon\]
Then, $v = \sup_{\varepsilon>0} v_\varepsilon$ is a viscosity subsolution of $Lv \ge -C \varepsilon$
for all $\varepsilon>0$.
Therefore $Lv\ge0$ in the viscosity sense in all of $\R^n$, for every $h\in \R^n$, i.e.,
\[L \bigl( u-u(\,\cdot\, -h) \bigr) \ge 0.\]
Now, changing $h$ by $-h$ we get
$L\bigl( u(\,\cdot\, + h) -u \bigr)\le 0$, or equivalently $L\bigl( u -u(\,\cdot\, - h) \bigr)\le 0$ in all of $\R^n$.
It follows that, for any fixed $h$
\[ Lv = L \bigl( u-u(\,\cdot\, -h) \bigr) = 0 \quad \mbox{in all of }\R^n.\]
Using \eqref{growthctrolgradient} we have
\[ \|v\|_{L^\infty(B_R)}  \le |h| R^{s+\alpha} \]
for all $R\ge1$, and since $s+\alpha<2s$ the regularity theory for $Lv=0$ implies that $v$ is affine.
Thus, $u$ is a quadratic polynomial.
But since $u$ is convex, has a minimum at $0$, and it has subquadratic growth at $\infty$ (since $1+s+\alpha<2$) we obtain that $u \equiv 0$, as desired.
\end{proof}

We finally give the:

\begin{proof}[Proof of Theorem \ref{thmclassif}]
The proof is via a blow-down argument.
If $u\equiv 0$ there is nothing to prove.
Hence, we will assume that $u$ is not identically $0$ and thus $\Omega\neq\R^n$.

After a translation we may assume that $0\in \partial\Omega$.
Since $\Omega$ is convex, then we will then have
\[\Omega \subset \{e\cdot x\le 0\}\]
for some $e\in S^{n-1}$.

For $R\ge 1$ define
\[ \theta(R)  = \sup_{R'\ge R} \frac{ \|\nabla u\|_{L^\infty(B_{R'})} }{ (R')^{s+\alpha}}.\]
Note that $0< \theta(R)<\infty$ and that it is nonincreasing.

For all $m\in \mathbb N$  there  is $R'_m\ge m$ such that
\[ (R'_m)^{-s-\alpha} \|\nabla u_m\|_{L^\infty(B_{R_m})} \ge \frac{\theta(m)}{2} \ge \frac{\theta(R'_m)}{2}.\]
Then the blow down sequence
\[ u_m(x) := \frac{u(R'_m x)}{(R'_m)^{1+s+\alpha}\theta(R'_m)}\]
satisfies the growth control
\[ \|\nabla u_m\|_{L^\infty(B_{R})} \le R^{s+\alpha}\quad \mbox{for all }R\ge 1 \]
and the ``nondegeneracy'' condition
\[ \|\nabla u_m\|_{L^\infty(B_{1})}  \ge \frac 12. \]

By the $C^{1,\tau}$ estimates of Proposition \ref{first-regularity-u} and the Arzel\`a-Ascoli theorem, the functions $u_m$ converge (up to a subsequence) locally uniformly in $C^1$ to a function $u_\infty$ that
satisfies
\begin{equation}\label{growthuinfty}
\|\nabla u_\infty\|_{L^\infty(B_{R})} \le R^{s+\alpha}\quad \mbox{for all }R\ge 1,
\end{equation}
\begin{equation}\label{gnondeguinfty}
\|\nabla u_\infty\|_{L^\infty(B_{1})}  \ge \frac 12,
\end{equation}
and
\begin{equation}\begin{cases}
L (\nabla u_\infty) = 0  \quad &\mbox{in } \R^n\setminus \Sigma\\
L \bigl(u_\infty-u_\infty(\cdot-h)\bigr) \geq 0  \quad &\mbox{in } \R^n\setminus \Sigma\\
D^2 u_\infty \ge 0 \quad &\mbox{in } \R^n\\
u_\infty = 0 &\mbox{in }\Sigma,
\end{cases}\end{equation}
where
\begin{equation}\label{Sigma}
\Sigma = \bigcap_{k\geq1}  \Omega/R'_k \subset \{e\cdot x\le 0\}.
\end{equation}
Note that \eqref{Sigma} follows from the convexity of $\Omega$: since $0\in \partial\Omega$ then we have $\Omega\supset\Omega/R'_1\supset \Omega/R'_2\supset \dots\supset \Omega/R'_k$ for all $k\geq1$.

Notice that $\Sigma$ is a cone.

If $\Sigma$ has empty interior then it follows from Proposition \ref{propclassif2} that $u_\infty\equiv0$.
Hence, $\Sigma$ has nonempty interior.

Now, using the interior Harnack inequality and the fact that $\nabla u_\infty$ is not identically zero, we find that $\Sigma = \{\nabla u_\infty =0\} = \{u_\infty=0\}$, where the last identity is by convexity of $u_\infty$.
Thus, it follows from Proposition \ref{propclassif} that
\[ u_\infty(x) = C(e\cdot x)_+^{1+s} \quad \mbox{ and }\quad\Sigma = \{e\cdot x\le 0\}.\]
Using that $0\in \partial \Omega$, \eqref{Sigma}, and the convexity of $\Omega$, it then follows that
\[\Omega = \Sigma = \{e\cdot x\le 0\}.\]
Hence, using again Proposition \ref{propclassif} we find that
\[ u(x) = K(e\cdot x)_+^{1+s} \]
for some $K>0$, and the theorem is proved.
\end{proof}

\begin{rem}
In the second order case there are non-trivial global solutions with ellipsoids and paraboloids as zero sets.
These solutions have the same homogeneity at infinity (quadratic) as the half-space solution.
\end{rem}

\section{Regular points and blow-ups}
\label{sec5}

We start in this section the study of free boundary points.
More precisely, we show that at any regular point $x_0$ there is a blow-up of the solution $u$ that converges to a global convex solution of \eqref{eqnliouville} satisfying \eqref{growthctrolgradient}.

From now on, we consider the equivalent problem
\begin{equation}\label{obstacleproblem2}\begin{cases}
u \ge 0 \quad &\mbox{in }\R^n\\
Lu \le f  &\mbox{in }\R^n\\
Lu=f  &\mbox{in }\{u > 0\}\\
D^2 u \ge -1  &\mbox{in } \R^n.
\end{cases}\end{equation}
This is obtained by subtracting the obstacle $\varphi$ to the solution $u$ to \eqref{obstacle} and then dividing by $C\|\varphi\|_{C^{2,1}(\R^n)}$.
For convenience, we still denote $u$ the solution to \eqref{obstacleproblem2}.

Notice that, dividing by a bigger constant if necessary, we will have
\begin{equation}\label{f}
f\in C^1(\R^n) \quad \mbox{with}\quad \|f\|_{{\rm Lip}(\R^n)}\le 1.
\end{equation}
Moreover, by the results of Section \ref{sec2},
\begin{equation}\label{C-1-tau}
u\in C^{1,\tau}(\R^n) \quad \mbox{with}\quad \|u\|_{C^{1,\tau}(\R^n)}\le 1,
\end{equation}
for some $\tau>0$.

\begin{defi}\label{def-regular}
We say that a free boundary point $x_0 \in \partial \{u>0\}$ is \emph{regular} if
\begin{equation}\label{regular-point}
\limsup_{r\downarrow0}\frac{\sup_{B_r(x_0)}u}{r^{1+s+\alpha}} = \infty
\end{equation}
for some $\alpha\in(0,s)$ such that $1+s+\alpha<2$.

Notice that, according to this definition, non-regular points will be those at which $u(x)= O(|x-x_0|^{1+s+\alpha})$ for all such values $\alpha$.
\end{defi}

The definition of \emph{regular} free boundary point is qualitative.
In some of our results we need the following quantitative version.

\begin{defi}\label{def-regular-2}
Let $\nu:(0,\infty) \rightarrow (0,\infty)$ be a nonincreasing function with
\[\lim_{r\downarrow 0} \nu(r)= \infty.\]
We say that a free boundary point $x_0 \in \partial \{u>0\}$ is \emph{regular} with modulus $\nu$ if
\begin{equation}\label{regular-point}
\sup_{r'\ge r} \frac{\sup_{B_{r'}(x_0)}u}{(r')^{1+s+\alpha}} \ge \nu(r)
\end{equation}
for some $\alpha\in(0,s)$ such that $1+s+\alpha<2$.
\end{defi}

We next show the following result, which states that at any regular free boundary point $x_0$ there is a blow-up sequence that converges to $K(e\cdot x)_+^{1+s}$ for some $K\in\R$ and $e\in S^{n-1}$.

\begin{prop}\label{proprescalings}
Let $L\in\mathcal L_*$, and $u$ a be solution to \eqref{obstacleproblem2}-\eqref{f}-\eqref{C-1-tau}.
Assume that $0$ is a regular free boundary point with modulus $\nu$.

Then, given $\delta>0$, $R_0\ge 1$, and $r_0>0$ there is
\[ r = r(\delta, R_0, r_0, \alpha, \nu, s,n,\lambda,\Lambda) \in (0,r_0)\]
such that, for some $d>0$, the rescaled function
\[ v(x) : = \frac{u(r x)}{d} \]
satisfies, for some $e\in S^{n-1}$,
\begin{equation}\label{growthvk}
\|\nabla v\|_{L^\infty(B_{R})} \le R^{s+\alpha}\quad \mbox{for all }R\ge 1,
\end{equation}
\begin{equation}\label{approxsol}
\bigl| L (\nabla v)\bigr| \le \delta \quad  \mbox{in }\{v > 0\},\\
\end{equation}
and
\begin{equation}
\bigl| v - K(e\cdot x)_+^{1+s} \bigr|  +\bigl |\nabla v -  K(1+s) (e\cdot x)_+^{s}\,e\bigr| \le \delta \quad \mbox{in } B_{R_0}
\end{equation}
for some $K>0$ satisfying $\frac 14 \leq K\leq 1$.
\end{prop}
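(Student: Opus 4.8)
The plan is to argue by compactness/contradiction, using the classification of global convex solutions (Theorem \ref{thmclassif}) as the rigidity input. Suppose the statement fails: then there exist $\delta_0>0$, $R_0\ge1$, $r_0>0$, and a sequence of radii $r_m\downarrow0$ (which we may take $\le r_0$) such that no choice of normalizing constant $d$ makes the three conclusions hold simultaneously for $v_m(x)=u(r_mx)/d$. To produce a good blow-up sequence I would follow the truncation/monotonicity-of-$\theta$ device already used in Section \ref{sec4}: for each $m$, set
\[
\theta_m(\rho) = \sup_{\rho'\ge\rho} \frac{\sup_{B_{\rho'}}u(r_m\,\cdot\,)}{\rho'^{\,1+s+\alpha}},
\]
and use the regularity-with-modulus hypothesis \eqref{regular-point} to ensure $\theta_m(1)\to\infty$ as $m\to\infty$ (here the modulus $\nu$ enters, guaranteeing the growth is seen at scale comparable to $1$ after rescaling by an appropriate $r_m$). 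Choosing $\rho'_m$ realizing $\theta_m$ up to a factor $2$ and relabeling $r_m\rho'_m$ as the new radius, define
\[
v_m(x) = \frac{u(r_m x)}{\theta_m r_m^{1+s+\alpha}},
\]
which by construction satisfies the uniform growth control $\|\nabla v_m\|_{L^\infty(B_R)}\le R^{s+\alpha}$ for $R\ge1$ (using $\partial_{ee}u\ge-1$ to pass from the sup of $u$ to the sup of $\nabla u$, as in Lemma \ref{lemC1+tau}) and the nondegeneracy $\|\nabla v_m\|_{L^\infty(B_1)}\ge c>0$.

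Next I would extract a limit. By the interior $C^{1,\tau}$ estimates of Proposition \ref{first-regularity-u} applied on each ball $B_R$, the sequence $v_m$ is precompact in $C^1_{\rm loc}(\R^n)$; pass to a subsequence $v_m\to v_\infty$. The scaling of the equation is the key computation: $Lv_m(x) = r_m^{2s}(Lu)(r_mx)/(\theta_m r_m^{1+s+\alpha}) = r_m^{2s-1-s-\alpha}\,f(r_mx)/\theta_m = r_m^{s-\alpha}f(r_mx)/\theta_m$, and since $s-\alpha>0$, $\|f\|_{L^\infty}\le C$, and $\theta_m\to\infty$, we get $Lv_m\to0$ (uniformly on compacts, in the viscosity sense) on $\{v_m>0\}$; similarly $L(\nabla v_m)\to0$ there and $L(v_m - v_m(\cdot-h))\to0$. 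The semiconvexity $D^2 v_m\ge -r_m^{1-s-\alpha}\,{\rm Id}\to 0$ passes to $D^2 v_\infty\ge0$. The zero sets converge (in Hausdorff distance on compacts, using nondegeneracy plus semiconvexity to rule out collapse) to a closed convex set $\Omega_\infty$ with $0\in\partial\Omega_\infty$. Thus $v_\infty$ is a global convex solution of \eqref{eqnliouville} satisfying \eqref{growthctrolgradient}, and it is not identically $0$ by nondegeneracy. By Theorem \ref{thmclassif}, $v_\infty(x) = C_\infty(e\cdot x)_+^{1+s}$ for some $e\in S^{n-1}$ and $C_\infty>0$.

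Finally I would normalize and conclude. Replace $v_m$ by $\tilde v_m = v_m/c_m$ where $c_m$ is chosen so that $\tilde v_m$ has the right normalization (e.g. so that $\|\nabla \tilde v_m\|$ at a fixed reference point equals that of $(e\cdot x)_+^{1+s}$ times a constant in $[\tfrac14,1]$); since $c_m\to C_\infty\in(0,\infty)$ this does not disturb the growth bound \eqref{growthvk} (possibly after absorbing a harmless fixed constant, or after an extra dilation to restore it — this is where one uses that $R^{s+\alpha}$ is scale-subcritical, $s+\alpha<\min\{1,2s\}$, so dilating by a bounded factor only improves the bound for large $R$). Then for $m$ large: the growth control \eqref{growthvk} holds; $|L(\nabla \tilde v_m)|\le\delta$ in $\{\tilde v_m>0\}$ by the scaling computation above (this forces the final choice of $r=r_m$); and $|\tilde v_m - K(e\cdot x)_+^{1+s}| + |\nabla\tilde v_m - K(1+s)(e\cdot x)_+^s e|\le\delta$ in $B_{R_0}$ by $C^1_{\rm loc}$ convergence, with $K\in[\tfrac14,1]$ after the normalization. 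This contradicts the assumed failure, proving the proposition. The main obstacle I anticipate is the bookkeeping that makes the growth bound \eqref{growthvk}, the nondegeneracy, and the normalization $K\in[\tfrac14,1]$ all hold \emph{at the same radius} $r$ — this is precisely why the $\theta_m$-device (choosing $\rho'_m$ realizing the sup) is needed rather than a naive rescaling, and why one must track that multiplying by the bounded constant $c_m$ and dilating can be done without breaking the sublinear-at-infinity gradient control.
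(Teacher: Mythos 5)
Your proposal is correct and takes essentially the same route as the paper: the same $\theta$-selection of radii realizing the sup, the same normalization $d=\theta(r)\,r^{1+s+\alpha}$, the scaling computation giving $|L(\nabla v)|\le r^{s-\alpha}/\theta\to0$, the $C^{1,\tau}$ compactness of Proposition \ref{first-regularity-u}, and the classification Theorem \ref{thmclassif} (the paper merely packages the compactness step as a separate lemma, Lemma \ref{lemconvergence}, stated for a single function with small data, and defines $\theta$ via $\|\nabla u\|$ rather than $\|u\|$). Your final renormalization by $c_m$ is unnecessary (and is the only slightly shaky point, since dividing by $c_m<1$ could spoil \eqref{growthvk}): passing the nondegeneracy $\|\nabla v_m\|_{L^\infty(B_1)}\ge\frac12$ and the growth bound $\le1$ on $B_1$ to the limit already forces $\frac12\le(1+s)K\le1$, hence $K\in[\frac14,1]$ with no further adjustment.
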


For this, we will need the following.

\begin{lem}\label{lema-theta}
Let $L\in\mathcal L_*$, and $u$ be the solution to \eqref{obstacleproblem2}-\eqref{f}-\eqref{C-1-tau}.
Assume that $x_0$ is a regular free boundary point with modulus $\nu$.
Then, the quantity
\begin{equation}\label{theta}
\theta(r) := \sup_{r'\ge r}  (r')^{-s-\alpha} \bigl\|\nabla u\bigr\|_{L^\infty(B_{r'}(x_0))}
\end{equation}
satisfies $\theta(r) \ge \nu(r)$ for all $r>0$.
\end{lem}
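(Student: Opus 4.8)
The statement to prove is Lemma \ref{lema-theta}: for a regular free boundary point $x_0$ with modulus $\nu$, the quantity $\theta(r) = \sup_{r'\ge r} (r')^{-s-\alpha}\|\nabla u\|_{L^\infty(B_{r'}(x_0))}$ satisfies $\theta(r)\ge \nu(r)$ for all $r>0$. The point is to convert the hypothesis on the size of $\sup_{B_{r'}(x_0)} u$ (Definition \ref{def-regular-2}) into a lower bound on the size of $\nabla u$. First I would normalize by assuming $x_0=0$ (translating), so that $u(0)=0$ since $0$ is a free boundary point and $u\ge 0$. By \eqref{C-1-tau} we have $u\in C^{1,\tau}$, and in particular $u$ is differentiable at the origin with $\nabla u(0)=0$ (because $u\ge 0=u(0)$ attains an interior minimum there).

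\textbf{Main step.} The key is the elementary estimate that controls the sup of a function by the sup of its gradient when the function vanishes at a point. Concretely, for any $r'>0$ and any $x\in \overline{B_{r'}}$, writing $u(x) = u(x)-u(0) = \int_0^1 \nabla u(tx)\cdot x\,dt$, one gets
\[
\sup_{B_{r'}} u \le r'\,\|\nabla u\|_{L^\infty(B_{r'})}.
\]
Dividing by $(r')^{1+s+\alpha}$ yields
\[
\frac{\sup_{B_{r'}} u}{(r')^{1+s+\alpha}} \le \frac{\|\nabla u\|_{L^\infty(B_{r'})}}{(r')^{s+\alpha}}.
\]
Taking $\sup$ over $r'\ge r$ on both sides, the left-hand side becomes $\ge \nu(r)$ by Definition \ref{def-regular-2}, while the right-hand side is exactly $\theta(r)$ by \eqref{theta}. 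This gives $\theta(r)\ge\nu(r)$, as desired.

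\textbf{Obstacle.} There is essentially no obstacle here: the only subtlety is making sure the hypothesis is used in the right ``$\sup_{r'\ge r}$'' form rather than a pointwise-in-$r$ form, and that one correctly passes the supremum through the inequality (monotonicity of $\sup$). One should also note $\theta(r)<\infty$ for every $r>0$, which follows from \eqref{C-1-tau} (indeed $\|\nabla u\|_{L^\infty(\R^n)}\le 1$, so $\theta(r)\le \sup_{r'\ge r}(r')^{-s-\alpha}$ which is finite for $r>0$); this guarantees the inequality is not vacuous. The chain-rule/fundamental-theorem-of-calculus bound $\sup_{B_{r'}}u\le r'\|\nabla u\|_{L^\infty(B_{r'})}$ is valid since $u\in C^1$ and $u(0)=0$, and it is the only ingredient needed beyond the definitions.
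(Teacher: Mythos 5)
Your proof is correct and is essentially the paper's argument: since $u(x_0)=0$, one bounds $\|u\|_{L^\infty(B_{r'}(x_0))}\le r'\|\nabla u\|_{L^\infty(B_{r'}(x_0))}$, divides by $(r')^{1+s+\alpha}$, and takes the supremum over $r'\ge r$ together with Definition \ref{def-regular-2}. The remarks about $\nabla u(x_0)=0$ and the finiteness of $\theta$ are harmless extras but not needed.
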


\begin{proof}
Since $u(x_0)=0$ for every $r'>0$ we have:
\[
(r')^{-1-s-\alpha} \|u\|_{L^\infty(B_{r'}(x_0))} \le (r')^{-s-\alpha}\bigl\|\nabla u\bigr\|_{L^\infty(B_{r'}(x_0))}
\]
and the result follows taking supremum in $r'\ge r$ and using the definition of $\nu$.
\end{proof}

To prove Proposition \ref{proprescalings} we will also need the following intermediate step.

\begin{lem}\label{lemconvergence}
Given $\delta>0$ and $R_0\ge 1$, there is
\[\eta=\eta(\delta,R_0, \alpha, n,s,\lambda,\Lambda)>0\]
such that the following statement holds:

Let $v\in {\rm Lip}(\R^n)$ be a nonnegative function satisfying
\[|L  (\nabla v\cdot e)|  \le \eta \quad \mbox{in }\{v>0\} \quad \mbox{ for all }e\in S^{n-1},\]
\[D^2 v \ge -\eta {\rm Id} \qquad\textrm{in}\quad \R^n,\]
\[\bigl\|\nabla v\bigr\|_{L^\infty(B_{R})}  \le R^{s+\alpha} \quad \mbox{for all }R\ge 1,\]
and
\[\bigl\|\nabla v\bigr\|_{L^\infty(B_{1})} \ge \frac{1}{2}.\]

Then, for some $e\in S^{n-1}$, we have
\[ \bigl| v - K(e\cdot x)_+^{1+s} \bigr|  +\bigl |\nabla v -  K(1+s) (e\cdot x)_+^{s}\,e\bigr| \le \delta \quad \mbox{in } B_{R_0}  \]
where
\[\frac 12 \le (1+s)K\le 1.\]
\end{lem}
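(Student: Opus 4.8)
The plan is to argue by compactness and contradiction, reducing the statement to the classification theorem (Theorem \ref{thmclassif}) proved in this section. Suppose the conclusion fails. Then there are $\delta>0$, $R_0\ge1$, and a sequence $v_k\in{\rm Lip}(\R^n)$ of nonnegative functions with $|L(\nabla v_k\cdot e)|\le 1/k$ in $\{v_k>0\}$ for all $e\in S^{n-1}$, with $D^2v_k\ge-\tfrac1k{\rm Id}$ in $\R^n$, with the growth control $\|\nabla v_k\|_{L^\infty(B_R)}\le R^{s+\alpha}$ for all $R\ge1$, and with the nondegeneracy $\|\nabla v_k\|_{L^\infty(B_1)}\ge\tfrac12$, but such that no choice of $e\in S^{n-1}$ and $K$ with $\tfrac12\le(1+s)K\le1$ makes the inequality $|v_k-K(e\cdot x)_+^{1+s}|+|\nabla v_k-K(1+s)(e\cdot x)_+^s\,e|\le\delta$ hold in $B_{R_0}$.

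First I would extract a limit. Normalizing so that $v_k(0)=0$ (we may assume each $v_k$ vanishes at some free boundary point in $\overline{B_1}$ after a harmless translation, using the nondegeneracy together with semiconvexity; if $v_k>0$ on all of $B_2$ one checks directly that the hypotheses force a contradiction with the growth/nondegeneracy balance since $s+\alpha<2s$), the functions $v_k$ satisfy the hypotheses of Proposition \ref{first-regularity-u} (with $K$ there bounded, using $D^2v_k\ge-\tfrac1k{\rm Id}$ and $M^+_{\mathcal L_0}(v_k-v_k(\cdot-h))\ge L(v_k-v_k(\cdot-h))\ge -\tfrac1k|h|$ after integrating the equation $L(\nabla v_k\cdot e)$ along segments). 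Hence the $v_k$ are uniformly $C^{1,\tau}$ on compact sets, and by Arzelà–Ascoli a subsequence converges in $C^1_{\rm loc}$ to some $v_\infty\in C^1(\R^n)$. Passing to the limit, $v_\infty\ge0$, $D^2v_\infty\ge0$, the growth control $\|\nabla v_\infty\|_{L^\infty(B_R)}\le R^{s+\alpha}$ and the nondegeneracy $\|\nabla v_\infty\|_{L^\infty(B_1)}\ge\tfrac12$ survive, and passing to the limit in the viscosity inequalities (using stability of viscosity solutions under the locally uniform convergence and the tail control provided by the growth bound with $s+\alpha<2s$) gives $L(\nabla v_\infty)=0$ in $\{v_\infty>0\}$ and $L(v_\infty-v_\infty(\cdot-h))\ge0$ in $\{v_\infty>0\}$. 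Setting $\Omega:=\{v_\infty=0\}$, which is closed and convex (convexity from $D^2v_\infty\ge0$) with $0\in\partial\Omega$, we see $v_\infty$ satisfies exactly \eqref{eqnliouville}–\eqref{growthctrolgradient}.

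Now Theorem \ref{thmclassif} applies: either $v_\infty\equiv0$, or $v_\infty(x)=C(e\cdot x)_+^{1+s}$ for some $e\in S^{n-1}$ and $C>0$. The case $v_\infty\equiv0$ is ruled out by the nondegeneracy $\|\nabla v_\infty\|_{L^\infty(B_1)}\ge\tfrac12$. So $v_\infty=C(e\cdot x)_+^{1+s}$, and then $\nabla v_\infty=C(1+s)(e\cdot x)_+^s\,e$; evaluating $\|\nabla v_\infty\|_{L^\infty(B_1)}=C(1+s)$ gives $C(1+s)\ge\tfrac12$, while the growth control $\|\nabla v_\infty\|_{L^\infty(B_R)}=C(1+s)R^s\le R^{s+\alpha}$ at $R=1$ gives $C(1+s)\le1$. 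Writing $K:=C$ we have $\tfrac12\le(1+s)K\le1$, so $v_\infty$ is of the claimed form. But then for $k$ large the $C^1_{\rm loc}$ convergence forces $|v_k-K(e\cdot x)_+^{1+s}|+|\nabla v_k-K(1+s)(e\cdot x)_+^s\,e|\le\delta$ on $B_{R_0}$, contradicting the choice of the sequence. This proves the lemma.

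The step I expect to be the main obstacle is justifying the passage to the limit in the nonlocal equation and, before that, the normalization that places a zero of $v_k$ inside a fixed ball: one must rule out that the ``mass'' of $v_k$ escapes to infinity or that the contact set degenerates, and one must check carefully that the tails $\int_{\R^n} v_k(y)(1+|y|)^{-n-2s}\,dy$ are controlled uniformly (which they are, thanks to $s+\alpha<2s$ in the growth bound, so that $|\nabla v_k|\le|x|^{s+\alpha}$ integrates against the kernel), so that the viscosity inequalities — in the increments $v_k-v_k(\cdot-h)$ and in $\nabla v_k$ — are stable under $C^1_{\rm loc}$ limits. Everything else is a routine application of the compactness estimate of Proposition \ref{first-regularity-u} and the classification of Theorem \ref{thmclassif}.
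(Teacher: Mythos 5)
Your outline is the same compactness-and-contradiction route as the paper: take a contradiction sequence, get uniform $C^{1,\tau}$ bounds from Proposition \ref{first-regularity-u}, pass to a $C^1_{\rm loc}$ limit, classify the limit by Theorem \ref{thmclassif}, and pin down $K$ from the nondegeneracy and the growth control at $R=1$. That skeleton is exactly the paper's proof. One small deviation: since $\eta$ is claimed to depend only on $(\delta,R_0,\alpha,n,s,\lambda,\Lambda)$ and not on $L$, the contradiction sequence should carry a sequence of operators $L_k\in\mathcal L_*$ as well, extracting a weak limit of the kernels (as the paper does); fixing a single $L$ only yields an $\eta$ depending on $L$.

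The genuine problem is the normalization step you single out as the main obstacle. Your parenthetical claim --- that if $v_k>0$ in all of $B_2$ the hypotheses contradict the growth/nondegeneracy balance, so that after a ``harmless translation'' one may assume $v_k$ vanishes at a point of $\overline{B_1}$ --- is false as stated. Take $v(x)=\frac{1}{(1+s)(1+a)^{s}}\,(e\cdot x+a)_+^{1+s}$ with $a$ large: then $v\ge0$ is convex, $L(\nabla v\cdot e')=0$ in $\{v>0\}$ for every $L\in\mathcal L_*$ and every $e'$ (the one-dimensional profile $(t)_+^{s}$ is $L$-harmonic where positive), $\|\nabla v\|_{L^\infty(B_R)}=\bigl(\frac{R+a}{1+a}\bigr)^{s}\le R^{s}\le R^{s+\alpha}$ for $R\ge1$, and $\|\nabla v\|_{L^\infty(B_1)}=1$; yet $v>0$ on $B_2$ (indeed on $B_a$) and its zero set is at distance $a$ from the origin, so no contradiction arises and no zero can be brought into $\overline{B_1}$ --- and translating to the distant zero is not harmless, since the growth control and the nondegeneracy are centered at the origin. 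The way out is not to derive the existence of a nearby zero from the listed hypotheses, but to use the additional information available where the lemma is applied (Proposition \ref{proprescalings}): there $v$ is a rescaling of $u$ about a free boundary point, so $v(0)=0$ and $0\in\partial\{v>0\}$, and this is precisely what makes the invocation of Theorem \ref{thmclassif} (whose statement has $0\in\partial\Omega$) legitimate for the limit; the paper's own proof uses this silently rather than proving it. Relatedly, your verification of the hypothesis $L(v_k-v_k(\cdot-h))\ge-K|h|$ of Proposition \ref{first-regularity-u} ``by integrating $L(\nabla v_k\cdot e)$ along segments'' only works when the whole segment $[x-h,x]$ stays in $\{v_k>0\}$; in the application this increment inequality is instead inherited directly from the obstacle problem ($Lu=f$ in $\{u>0\}$, $Lu\le f$ everywhere), and it is cleaner to carry it as a hypothesis than to try to deduce it from the gradient equation.
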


\begin{proof}
The proof is by a compactness contradiction argument.
Assume that for some $R_0\ge 1$ and $\delta>0$ we have sequences $\eta_k\downarrow 0$, $L_k$ of the form \eqref{L*1}-\eqref{L*2},  and $v_k\in {\rm Lip}(\R^n)$ of functions satisfying
\[|L_k(\nabla v_k\cdot e)|  \le \eta_k \quad \mbox{in }\{v_k>0\} \quad \mbox{ for all }e\in S^{n-1},\]
\[D^2 v_k \ge -\eta_k {\rm Id} \qquad\textrm{in}\quad \R^n,\]
\[\bigl\|\nabla v_k\bigr\|_{L^\infty(B_{R})}  \le R^{s+\alpha} \quad \mbox{for all }R\ge 1,\]
and
\begin{equation}\label{nondegvk}
\bigl\|\nabla v_k\bigr\|_{L^\infty(B_{1})} \ge \frac{1}{2}
\end{equation}
but we have
\begin{equation}\label{contr}
 \bigl\| v_k - K(e\cdot x)_+^{1+s} \bigr\|_{C^1(B_{R_0})} \ge \delta \quad \mbox{for all } \frac 1 2\le(1+s)K\le 1\quad\textrm{and}\quad e\in S^{n-1}.
 \end{equation}

By Proposition \ref{first-regularity-u} (rescaled) we obtain that $v_k$ is $C^{1,\tau}$ in all of $\R^n$ with the estimate
\[ \bigl[\nabla v_k\bigr]_{C^\tau(B_{R})}  \le CR^{s+\alpha-\tau} \quad \mbox{for all }R\ge 1.\]
Thus, up to taking a subsequence, the operators $L_k$ converge weakly to some operator $L\in \mathcal L_*$ ---the spectral measures $a_k(y/|y|)$ converge weakly in $L^\infty(S^{n-1})$.
By Arzel\`a-Ascoli, the functions $v_k$ converge in $C^1_{\rm loc}(\R^n)$ to a function $v_\infty$, that is a viscosity solution of
\[L(\nabla v_\infty\cdot e) = 0 \quad \mbox{in }\{v_\infty>0\} \quad \mbox{ for all }e\in S^{n-1},\]
\[D^2 v_\infty  \ge 0 \qquad\textrm{in}\quad \R^n\]
with the growth control
\[\bigl\|\nabla v_\infty\bigr\|_{L^\infty(B_{R})}  \le R^{s+\alpha} \quad \mbox{for all }R\ge 1.\]

By the classification result Theorem \ref{thmclassif}, we have
\[ v_\infty \equiv K_0 (x\cdot e)_+^{1+s},\quad \mbox{for some } e\in S^{n-1} \mbox{ and } K_0\ge 0.\]
Passing \eqref{nondegvk} to the limit and using the growth control we have
\[ \frac 12 \le \bigl\|\nabla v_k\bigr\|_{L^\infty(B_{1})} \le 1\]
and thus  $\frac{1}{2} \le (1+s)K_0 \le 1$.

We have shown that $v_k\rightarrow K_0 (x\cdot e)_+^{1+s}$ in the $C^1$ norm, uniformly on compact sets.
In particular, \eqref{contr} is contradicted for large $k$, and thus the lemma is proved.
\end{proof}

\begin{proof}[Proof of Proposition \ref{proprescalings}]
We will deduce the result from Lemma \ref{lemconvergence}.
For this, we have to rescale the solution $u$ appropriately and check that the hypotheses of the lemma are satisfied.

Let
\[\theta(r)  :=   \sup_{r'\ge r}  (r')^{-s-\alpha} \bigl\|\nabla u\bigr\|_{L^\infty(B_{r'})}.\]
By assumption, $0$ is a regular point with modulus $\nu$.
Then, by Lemma \ref{lema-theta} we have
\[ \theta(r) \ge \nu(r)\rightarrow \infty\qquad \textrm{as}\ r\downarrow0.\]
Note that $\theta$ is nonincreasing.

Then, for every $m \in \mathbb N$ there is an
\[r'_{m}\ge \frac1m \]
such that
\begin{equation}\label{nondeg_proprescalings}
 (r'_{m})^{-s-\alpha} \bigl\|\nabla u\bigr\|_{L^\infty(B_{r'})} \ge \frac 12 \theta(1/m)\ge \frac 12 \theta(r'_m).
\end{equation}
Note that since $\|\nabla u\|_{L^\infty(\R^n)}\le 1$ we have
\[ (r'_{m})^{-s-\alpha}  \ge \frac 12 \theta(1/m)\ge \frac 12 \nu(1/m) \]
and thus
\[  \frac1m \le r'_m \le (\nu(1/m))^{-1/(s+\alpha)} \downarrow 0. \]
This shows that taking $m$ large enough we will have $r'_m\le r_0$.

Define the`` blow-up sequence''
\[v_{m}(x) := \frac{u(r'_{m} x)}{(r'_m)^{1+s+\alpha} \theta(r'_{m})}.\]
By definition of $\theta$, we have
\begin{equation}\label{growthcv_m1}
\bigl\|\nabla v_m\bigr\|_{L^\infty(B_{R})}  \le R^{s+\alpha} \quad \mbox{for all }R\ge 1,
\end{equation}
and by \eqref{nondeg_proprescalings}
\begin{equation}\label{nondegcv_m}
\bigl\|\nabla v_m\bigr\|_{L^\infty(B_{1})} \ge \frac{1}{2}.
\end{equation}

On the other hand, the function $v_m$ satisfies
\[\begin{split}
\bigl|L \nabla v_{m} \bigr|
&= \frac{(r'_{m})^{1+2s}}{(r'_{m})^{1+s+\alpha} \theta(r'_{m})}  \bigl|  L \nabla u (r'_{m} \,\cdot\,) \bigl|
\\
&\le  \frac{(r'_{m})^{1+2s}}{(r'_{m})^{1+s+\alpha} \theta(r'_{m})} \sup_{x\in \R^n} \bigl| \nabla f \bigl|
\\
&\le \frac{(r'_{m})^{s-\alpha}}{\theta(r'_m)}
\\
&\le \frac{1}{\nu(r'_{m})}  \le  \frac{1}{\nu(1/m)}  :=\eta_m
\end{split}\]
in the domain $\{v_{m}>0\}$.
Furthermore,
\[ D^2 v_{m} = \frac{(r'_{m})^{2}}{(r'_m)^{1+s+\alpha} \theta(r'_{m})} D^2 u
\ge -\frac{(r'_m)^{1-s-\alpha}}{\nu(1/m)}\,{\rm Id} \ge - \eta_m {\rm Id} \qquad\textrm{in}\quad \R^n.\]

Take now $m$ large enough (but fixed) so that $\eta_m\leq \eta$, where $\eta$ is the constant given by Lemma \ref{lemconvergence}.
Then, by Lemma \ref{lemconvergence} we obtain
\[\bigl| v_m - K(e\cdot x)_+^{1+s} \bigr|  +\bigl |\nabla v_m -  K(1+s) (e\cdot x)_+^{s}\,e\bigr| \le \delta \quad \mbox{in } B_{R_0}\]
with $1/2 \le (1+s)K \le 1$.
\end{proof}

\section{Optimal regularity of solutions and regularity of free boundaries}
\label{sec6}

We prove in this section Theorem \ref{th2}.
The proof will consist on several steps.

First, we prove that the free boundary will be Lipschitz near any regular point~$x_0$, with Lipschitz constant going to zero at that point.
Using this, we then prove that the set of regular points is open, and thus the free boundary is $C^1$ near those points.
Finally, we deduce that the free boundary will be $C^{1,\gamma}$, and that the solution will be $C^{1,s}$.

\subsection{Cones of monotonicity}

We prove first the following result, which states that the free boundary is Lipschitz in a neighborhood of any regular point $x_0$.
Moreover, the Lipschitz constant approaches zero as we approach $x_0$, so that the free boundary is $C^1$ at $x_0$.

\begin{prop}\label{free-bdry-Lipschitz}
Let $u$  be a solution of the obstacle problem \eqref{obstacleproblem2}-\eqref{f}-\eqref{C-1-tau} and assume that $x_0=0$ is a regular free boundary point with modulus $\nu$.
Then, there exists a vector $e\in S^{n-1}$ such that for any $\ell >0$ there is $r>0$ such that
\[ \{u>0\}\cap B_r  = \bigl\{\bar x_n > g(\bar x_1, \bar x_2,\dots, \bar x_{n-1}) \bigr\}\cap B_r \]
where $\bar x  = R x$, $R$ rotation with $Re = e_n$, and where $g$ is Lipschitz with
\[\|g\|_{C^{0,1}(B_r)}\le \ell.\]
Moreover,
\[\partial_{e'}u\geq0\quad\textrm{in}\ B_r,\quad\textrm{for all}\ e'\cdot e\geq\frac{\ell}{\sqrt {1+\ell^2}},\]
and
\[\partial_e u\geq cr^{s+\alpha}\quad\textrm{in}\ B_{r}(2re)\subset\{u>0\}.\]
The constants $c$ and $r$ depend only on $\ell$, $\alpha$, $\nu$, $n$, $s$, $\lambda$, $\Lambda$.
\end{prop}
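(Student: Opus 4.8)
The plan is to combine the blow-up classification of Proposition~\ref{proprescalings} with a barrier/continuity argument to transfer the one-dimensional behavior of the blow-up limit to the solution $u$ in a full neighborhood of $x_0=0$. First I would fix $\ell>0$ and choose the cone of directions $\mathcal C_\ell:=\{e'\in S^{n-1}:e'\cdot e\ge \ell/\sqrt{1+\ell^2}\}$ around the vector $e$ supplied by Proposition~\ref{proprescalings}. For the limit profile $K(e\cdot x)_+^{1+s}$ one computes directly that $\partial_{e'}\bigl(K(e\cdot x)_+^{1+s}\bigr)=K(1+s)(e'\cdot e)(e\cdot x)_+^s$, which is $\ge cK(e\cdot x)_+^s$ uniformly for $e'\in\mathcal C_\ell$; in particular it is strictly positive in $\{e\cdot x>0\}$ and vanishes on $\{e\cdot x\le 0\}$. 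The idea is that by the $C^1$-closeness in $B_{R_0}$ given by Proposition~\ref{proprescalings}, the rescaled solution $v=u(r\cdot)/d$ inherits $\partial_{e'}v\ge 0$ on a large ball, at least away from a thin neighborhood of the hyperplane $\{e\cdot x=0\}$. To upgrade this to $\partial_{e'}v\ge 0$ in all of $B_{R_0}$ one uses that $w:=\partial_{e'}v$ solves an equation: $L(\nabla v\cdot e')=L(\partial_{e'}v)$ is small in $\{v>0\}$ by \eqref{approxsol}, $w=0$ on $\{v=0\}$ (since $v\ge 0$ attains its minimum there), and $w$ is bounded below near $\{v=0\}$ from the semiconvexity $D^2v\ge-\eta$; a maximum principle / barrier argument then forbids $w$ from becoming negative. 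This gives the monotonicity statement $\partial_{e'}u\ge 0$ in $B_r$ for $e'\in\mathcal C_\ell$ after rescaling back, with $r$ the scale coming out of Proposition~\ref{proprescalings}.

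Next I would derive the Lipschitz graph description of the free boundary from the cone of monotonicity. Once $\partial_{e'}u\ge 0$ in $B_r$ for every $e'\in\mathcal C_\ell$, a standard argument shows that $\{u>0\}\cap B_r$ is, in rotated coordinates $\bar x=Rx$ with $Re=e_n$, a subgraph $\{\bar x_n>g(\bar x')\}\cap B_r$ with $\mathrm{Lip}(g)\le\ell$: indeed, if $u(x)>0$ then $u>0$ along the whole solid cone $x+\{y:\,y\cdot e'\ge 0\ \forall e'\in\mathcal C_\ell\}$, which is exactly the cone of slopes $\le 1/\ell$ opening in the $e_n$ direction up to normalization — matching the slope $\ell$ after the right bookkeeping of the constant $\ell/\sqrt{1+\ell^2}$. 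Monotonicity in the open cone gives that the positivity set is both a subgraph and (taking complements) its complement is a subgraph in the opposite direction, which forces $\{u=0\}\cap B_r$ to be the closed epigraph of a Lipschitz function, hence $g\in C^{0,1}(B_r)$ with the stated bound. Since $\ell>0$ was arbitrary, letting $\ell\downarrow 0$ (with $r=r(\ell)\downarrow 0$) yields that $g$ is differentiable at the origin with $\nabla g(0)=0$, i.e. the free boundary is $C^1$ at $x_0$.

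Finally, for the nondegeneracy statement $\partial_e u\ge cr^{s+\alpha}$ in $B_r(2re)$, I would again use the $C^1$-approximation: in $B_{R_0}$ the function $\nabla v$ is within $\delta$ of $K(1+s)(e\cdot x)_+^s e$, and evaluating $\partial_e$ at the point $2e$ (so $e\cdot x=2$) gives $\partial_e v(2e)\ge K(1+s)2^s-\delta\ge c>0$ since $K\ge \tfrac14$; by continuity (the uniform $C^{1,\tau}$ estimate of Proposition~\ref{first-regularity-u}, rescaled) this persists in $B_1(2e)$, and in particular $B_1(2e)\subset\{v>0\}$. Undoing the scaling $v(x)=u(rx)/d$ with $d\sim r^{1+s+\alpha}\theta(r)\ge r^{1+s+\alpha}$ converts $\partial_e v\ge c$ on $B_1(2e)$ into $\partial_e u\ge c\,d/r\ge c\,r^{s+\alpha}$ on $B_r(2re)$, with $B_r(2re)\subset\{u>0\}$. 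The main obstacle in this whole argument is the step promoting $\partial_{e'}v\ge 0$ from ``outside a thin strip around $\{e\cdot x=0\}$'' to ``everywhere in $B_{R_0}$'': the operator $L$ is nonlocal, so the usual local maximum-principle-at-a-touching-point reasoning must be done carefully with the tail of $v$ controlled by the growth bound \eqref{growthvk}, and one must use the semiconvexity $D^2v\ge -\eta$ together with $v\ge 0=v|_{\{v=0\}}$ to rule out a negative interior minimum of $w=\partial_{e'}v$ near the contact set. Controlling that nonlocal tail uniformly — exactly as in the proof of Lemma~\ref{lemC1+tau} — is where the real work lies; the graph and nondegeneracy steps are then routine consequences.
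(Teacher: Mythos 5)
Your overall architecture coincides with the paper's (rescale by Proposition \ref{proprescalings}, establish a cone of monotone directions, read off the Lipschitz graph, get nondegeneracy from the profile), and the graph and nondegeneracy steps as you describe them are fine. But there is a genuine gap exactly at the step you yourself flag as ``where the real work lies'': you never actually prove that $\partial_{e'}v\geq 0$ in a full ball. Saying that ``a maximum principle / barrier argument forbids $w=\partial_{e'}v$ from becoming negative'' is not yet an argument: with only $|L(\partial_{e'}v)|\le\delta$ in $\{v>0\}$, $\partial_{e'}v\ge-\delta$ in $B_{R_0}$ and $\partial_{e'}v=0$ on $\{v=0\}$, a small negative interior minimum is perfectly consistent with $Lw\le\delta$ unless one exhibits a \emph{quantitative} positive contribution to $Lw$ at the would-be minimum that beats $\delta$; moreover, after the necessary truncation of $w$ (to make $Lw$ well defined with controlled tails) the minimum could occur where the equation is not available. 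The mechanism you invoke for this step --- semiconvexity $D^2v\ge-\eta$ together with $v\ge0$ near the contact set --- is not what is needed: near $\{v=0\}$ the only input used is $\partial_{e'}v\ge-\delta$, which already follows from the $C^1$-closeness to $K(e\cdot x)_+^{1+s}$, and semiconvexity by itself produces no strict positivity.

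The missing ingredient is precisely the paper's Lemma \ref{lemdelta}, and its key hypothesis is a \emph{unit positive mass} condition, not a sign condition near the contact set. One sets $w=\frac{C_1}{\ell}(\nabla v\cdot e')\chi_{B_2}$; then \eqref{approxsol}, the growth control \eqref{growthvk} with $R_0$ chosen large (to control the truncation/tail error), and $\nabla v\cdot e'\ge \ell(e\cdot x)_+^s-\delta$ give $Lw\le\varepsilon$ in $B_1\setminus\{v=0\}$, $w\ge-\varepsilon$ in $B_2$, $w=0$ on $\{v=0\}\cup(\R^n\setminus B_2)$, and $\int_{B_1}w_+\ge1$. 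Sliding the barrier $\psi_t=-\varepsilon-t+\varepsilon\psi$ (with $\psi$ a bump equal to $1$ on $B_{1/2}$ and supported in $B_{3/4}$) from below, a negative value of $w$ in $B_{1/2}$ would force a touching point $z\in B_{3/4}$ with $w(z)<0$, hence $z\in\{v>0\}$ where the equation holds; there the unit mass gives $L(w-\psi_t)(z)\ge\lambda\int_{B_1}w_+\ge\lambda$, while the equation and $|L\psi_t|\le C\varepsilon$ give $L(w-\psi_t)(z)\le C\varepsilon$, a contradiction for $\varepsilon$ small. It is this positive mass of $w_+$ in $B_1$, fed into the nonlocal operator at the touching point, that makes the maximum principle quantitative; without this lemma (or an equivalent substitute) your proof of the monotonicity cone, and hence of the Lipschitz graph statement, is incomplete.
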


For this, we will need the following:

\begin{lem}\label{lemdelta}
There is $\varepsilon=\varepsilon(n,s, \Lambda,\lambda)>0$ such that the following statement holds.

Let $E\subset B_1$ be some relatively closed set.
Assume that $w\in C(B_1)$ satisfies (in the viscosity sense)
\begin{equation}\label{eqnw}
\begin{cases}
Lw \le \varepsilon \quad &\mbox{in } B_1\setminus E\\
w = 0 &\mbox{in } E \cup (\R^n \setminus B_2)\\
w \ge -\varepsilon  &\mbox{in } B_2\setminus E,
\end{cases}
\end{equation}
and
\begin{equation}\label{wlargeatmanypoints}
\int_{B_1} w_+\,dx \ge 1.
\end{equation}
Then,
\[w\ge 0 \quad \mbox{in }\overline{ B_{1/2}}.\]
\end{lem}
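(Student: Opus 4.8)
The plan is to argue by contradiction and compactness, producing in the limit a nonnegative function which violates a strong maximum principle. Suppose the statement fails: then there exist $\varepsilon_k\downarrow 0$, operators $L_k\in\mathcal L_0$ (or $\mathcal L_*$), relatively closed sets $E_k\subset B_1$, and functions $w_k\in C(B_1)$ satisfying \eqref{eqnw} with $\varepsilon=\varepsilon_k$ and \eqref{wlargeatmanypoints}, but with $\inf_{\overline{B_{1/2}}}w_k<0$. First I would record the a priori bounds: since $w_k=0$ outside $B_2$ and $w_k\ge-\varepsilon_k\ge-1$ in $B_2$, and $L_k w_k\le\varepsilon_k$ in $B_1\setminus E_k$ with $w_k=0$ on $E_k$, the function $w_k$ is bounded above in $B_{3/2}$ by a universal constant (comparing with the $L_k$-harmonic function in $B_{3/2}$ with the appropriate exterior data, or with an explicit supersolution of $L_k v=\varepsilon_k$ that is $\ge w_k$ outside $B_{3/2}$). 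Thus $\|w_k\|_{L^\infty(\R^n)}\le C$ uniformly.

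Next I would obtain compactness. By the interior Hölder estimate for $\mathcal L_0$ \cite[Theorem 11.1]{CS}, applied in $B_1\setminus E_k$ where $L_kw_k$ is bounded, together with the fact that $w_k$ vanishes continuously on $E_k$, one gets that $w_k$ is equicontinuous on compact subsets of $B_1$ (this is the same kind of "solution vanishing on a closed set with bounded right-hand side is uniformly continuous" statement used for obstacle problems). Passing to subsequences, $w_k\to w_\infty$ locally uniformly in $B_1$, $E_k\to E_\infty$ (in Hausdorff/Kuratowski sense) with $E_\infty\subset B_1$ relatively closed, and $L_k\to L_\infty\in\mathcal L_0$ weakly (spectral measures converge weak-$*$). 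The limit $w_\infty$ satisfies, in the viscosity sense, $L_\infty w_\infty\le 0$ in $B_1\setminus E_\infty$, $w_\infty=0$ on $E_\infty\cup(\R^n\setminus B_2)$, $w_\infty\ge 0$ in $B_2\setminus E_\infty$ (from $w_k\ge-\varepsilon_k$), hence $w_\infty\ge 0$ everywhere, and $\int_{B_1}w_\infty\,dx\ge 1$ from \eqref{wlargeatmanypoints} (using the uniform bound to pass the integral to the limit). Also $\inf_{\overline{B_{1/2}}}w_\infty\le 0$, so $w_\infty$ attains the value $0$ at some point of $\overline{B_{1/2}}$.

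Finally I would derive the contradiction from a strong minimum principle. The function $w_\infty\ge0$ is a viscosity supersolution of $L_\infty w_\infty\le 0$ in the open set $B_1\setminus E_\infty$ and attains its minimum value $0$ there unless that minimum point lies in $E_\infty$; in either case one uses the nonlocal structure of $L_\infty$: if $w_\infty(x_*)=0$ for some $x_*\in \overline{B_{1/2}}$, then evaluating (a touching paraboloid from below, or directly the pointwise inequality where $w_\infty$ is smooth enough, or via the viscosity formulation) gives
\[
0 \ge L_\infty w_\infty(x_*) = \int_{\R^n}\Bigl(\tfrac{w_\infty(x_*+y)+w_\infty(x_*-y)}{2}-w_\infty(x_*)\Bigr)\frac{b(y)}{|y|^{n+2s}}\,dy \ge c\int_{\R^n}\frac{w_\infty(x_*+y)}{|y|^{n+2s}}\,dy \ge 0,
\]
forcing $w_\infty\equiv 0$ a.e., which contradicts $\int_{B_1}w_\infty\ge 1$. (If $x_*\in E_\infty$ one takes instead a point of $B_1\setminus E_\infty$ approaching it, or localizes the argument; the key point is that a nonnegative function with $L_\infty w_\infty\le 0$ somewhere it touches $0$ must vanish identically, because all the mass $w_\infty(x_*\pm y)$ enters with a positive weight.) The main obstacle I anticipate is the compactness step — specifically, establishing equicontinuity of the $w_k$ up to the varying obstacle sets $E_k$ with only the one-sided bound $L_kw_k\le\varepsilon_k$ (rather than a two-sided bound) — and making the passage to the limit rigorous at the level of viscosity solutions with moving exterior data; this is handled by the standard nonlocal stability results of \cite{CS} together with the barrier $c(1-4|x-P|)_+^2$-type subsolutions already used in Lemma \ref{lemB1}.
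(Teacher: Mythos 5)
Your compactness scheme has a fundamental flaw: after passing to the limit, the contradiction disappears. If the lemma fails along $\varepsilon_k\downarrow0$, the violation is only that $w_k<0$ somewhere in $\overline{B_{1/2}}$, and since $w_k\ge-\varepsilon_k$ this negativity is of size at most $\varepsilon_k\to0$. In the limit you therefore only learn that $w_\infty\ge0$ vanishes at some point $x_*\in\overline{B_{1/2}}$ --- but that is no contradiction at all. The points $z_k\notin E_k$ where $w_k$ is negative may converge to a point of (or on the boundary of) $E_\infty$, where no equation holds; and a nonnegative function vanishing on $E_\infty$, satisfying $L_\infty w_\infty\le0$ only in $B_1\setminus E_\infty$, with $\int_{B_1}w_\infty\ge1$, is a perfectly consistent configuration (e.g.\ solve $Lw=0$ in $B_1\setminus E$ with $w=0$ on $E$ and large nonnegative data in $B_2\setminus B_1$). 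Your parenthetical fix --- ``take a point of $B_1\setminus E_\infty$ approaching $x_*$'' --- does not repair this: the strong maximum principle needs the minimum to be attained at a point where the equation holds, and no quantitative normalization (e.g.\ dividing by $-\inf_{\overline{B_{1/2}}}w_k\le\varepsilon_k$) survives, since it destroys either the boundedness or the right-hand-side smallness. In short, Lemma \ref{lemdelta} is a quantitative statement whose content is exactly the interplay between the size $\varepsilon$ of the data and the fixed mass in \eqref{wlargeatmanypoints}, and a soft limit discards precisely that information.

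There are also gaps in the compactness step itself: the hypotheses \eqref{eqnw}--\eqref{wlargeatmanypoints} give no upper bound on $w$ in $B_2\setminus B_1$ (nor, consequently, in $B_1$), so the claimed uniform $L^\infty$ bound by comparison fails (the exterior data of the comparison problem is itself uncontrolled), and the equicontinuity up to the varying sets $E_k$ cannot be extracted from the one-sided inequality $Lw_k\le\varepsilon_k$ alone. The paper's proof avoids all of this with a direct sliding-barrier argument: if $w<0$ somewhere in $\overline{B_{1/2}}$, slide $\psi_t=-\varepsilon-t+\varepsilon\psi$ (with $\psi$ a bump equal to $1$ on $B_{1/2}$) until it touches $w$ from below at some $z$; since $\psi_t<0$ one has $w(z)<0$, hence $z\in B_1\setminus E$ and $L$ can be evaluated classically there (Lemma 3.3 in \cite{CS}), giving $L(w-\psi_t)(z)\ge\lambda\int_{B_1}w_+\ge\lambda$ on one hand and $L(w-\psi_t)(z)\le\varepsilon+C\varepsilon$ on the other --- a contradiction for $\varepsilon$ small. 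I would encourage you to rework the proof along these quantitative lines rather than by compactness.
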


\begin{proof}
Let $\psi \in C^\infty_c(B_{3/4})$ be some radial bump function with $\psi \ge 0$ and satisfying $\psi\equiv1$ in $B_{1/2}$.
Let
\[ \psi_t (x)  = - \varepsilon - t +  \varepsilon \psi(x).\]

If the conclusion of the lemma does not hold then $\psi_\varepsilon$ touches $w$ by below at $z\in  B_{3/4}$ for some $t>0$.
Since $\psi_t\le -t$ in all of $\R^n$ we have that $w(z) = \psi_t(z) <0$ and hence $z$ belongs to $B_1 \setminus E$.

By Lemma 3.3 in \cite{CS}, the operator $L$ can be evaluated classically at the point~$z$.

One the one hand we have
\[ L(w-\psi_t)(z) \ge \lambda\int_{\R^n} (w-\psi_t)(z+y) |y|^{-n-2s} \ge  \lambda\int_{B_1} w_+  \,dx \ge \lambda.\]
On the other hand
\[  L(w-\psi_t)(z)\le Lw(z) + |L \psi_t(z)| = \varepsilon + C\varepsilon.\]
We obtain a contradiction by taking $\varepsilon$ small enough.
\end{proof}

We now show Proposition \ref{free-bdry-Lipschitz}.

\begin{proof}[Proof of Proposition \ref{free-bdry-Lipschitz}]
Let $\delta>0$, $R_0\geq1$, and $r_0>0$ to be chosen later, and consider the rescaled function
\[ v(x)  = \frac{u(rx)}{d},\]
with $r\in(0,r_0)$ and $d>0$ given by Proposition \ref{proprescalings}.
Recall that for some $e\in S^{n-1}$ we have
\[\bigl|\nabla v(x)-K(1+s)(x\cdot e)_+^se\bigr| \leq \delta,\]
for some $\frac14\leq K\leq 1$.

Let $e'\in S^{n-1}$ with
\[e'\cdot e \ge \frac{\ell}{\sqrt {1+\ell^2}} \ge \frac \ell2\]
(we may assume that $\ell\le 1$).

Then, we have
\begin{equation} \label{largeatmanypoints}
\nabla v\cdot e' \geq \ell (e\cdot x)_+^{s} - \delta \quad \mbox{in }B_{R_0},
\end{equation}
and
\begin{equation} \label{eqnvk}
|L(\nabla v\cdot e')| \le \delta  \quad \mbox{in }\{v > 0\}.
\end{equation}

Let $\varepsilon>0$ be the universal constant from Lemma \ref{lemdelta}.
Then, for every given $\ell>0$ from \eqref{largeatmanypoints}, \eqref{eqnvk}, and the growth control \eqref{growthvk}, we see that we can choose $C$ universal large enough,
and $R_0$ large enough (depending only on  $\ell$, $\alpha$, $n$, $s$, $\delta$, $\lambda$ and $\Lambda$) so that the function
\[ w =  \frac{C_1}{\ell}  (\nabla v\cdot e') \chi_{B_2}\]
satisfies
\begin{equation}\label{eqnw1}
\begin{cases}
Lw \le  \frac{CC_1}{\ell}\,\delta\leq \varepsilon \quad &\mbox{in } B_1\setminus E\\
w = 0 &\mbox{in } E \cup (\R^n \setminus B_2)\\
w \ge -\frac{CC_1}{\ell}\,\delta\geq -\varepsilon  &\mbox{in } B_2\setminus E,
\end{cases}
\end{equation}
and
\[\int_{B_1} w_+\,dx \ge 1,\]
provided that $C_1$ is large enough and $\delta$ is small enough.

Then, Lemma \ref{lemdelta} implies that
\[w\ge 0 \quad \mbox{in }B_{1/2}.\]
Using that $v$ is a rescaling of $u$, this is equivalent to
\[ \partial_{e'} u \ge 0 \quad \mbox{in}\quad  B_{r/2}.\]
This happening for all $e'$ with $e'\cdot e \ge \frac{\ell}{\sqrt {1+\ell^2}}$ implies that $\{u>0\}$ is in $B_{r/2}$ a Lipchitz epigraph in de direction $e$ with Lipschitz constant bounded by $\ell$.

Finally, using \eqref{largeatmanypoints} we find
\[\partial_e u\geq cr^{s+\alpha}\quad\textrm{in}\ B_{r}(2re),\]
and the Proposition is proved.
\end{proof}

\subsection{$C^1$ regularity of free boundaries}

We prove now that the set of regular free boundary points is open, and that the free boundary is $C^1$ near those points.

The following lemma from \cite{RS-C1} states the existence of positive subsolutions of homogeneity $s+\epsilon$ vanishing outside of a convex cone that is very close to a half space.

\begin{lem}[\cite{RS-C1}]\label{homog-subsol}
Let $s\in(0,1)$, and $e\in S^{n-1}$.
For every $\epsilon>0$ there is $\eta>0$ such that the function
\[ \Phi(x) = \left( e\cdot x- \frac{\eta}{4} |x| \left(1- \frac{(e\cdot x)^2}{|x|^2} \right)\right)_+^{s+\epsilon}
\]
satisfies, for all $L\in \mathcal L_{*}$,
\[
\begin{cases}
L \Phi \ge 0 \quad & \mbox{in }\mathcal C_\eta  \\
\Phi = 0  \quad & \mbox{in }\R^n \setminus \mathcal C_\eta\\
\end{cases}
\]
where
\[\mathcal C_\eta: = \left\{ x \in \R^n\ : \ e\cdot\frac{x}{|x|} >  \frac{\eta}{4} |x| \left(1- \frac{(e\cdot x)^2}{|x|^2} \right) \right\}.\]
The constant $\eta$ depends only on $\epsilon$, $s$, and ellipticity constants.
\end{lem}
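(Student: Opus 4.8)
The plan is to reduce the estimate to a one–variable computation for the half–space and then pass to the thin cone $\mathcal C_\eta$ by a compactness argument as $\eta\downarrow 0$. Write $\Phi(x)=\ell_\eta(x)_+^{s+\epsilon}$, where
\[
\ell_\eta(x)=e\cdot x-\frac{\eta}{4}\Bigl(|x|-\frac{(e\cdot x)^2}{|x|}\Bigr)
\]
is $1$–homogeneous, smooth away from the origin, satisfies $\ell_\eta\le e\cdot x$ (by Cauchy--Schwarz), and has $\{\ell_\eta>0\}=\mathcal C_\eta$. Since $\Phi$ is homogeneous of degree $s+\epsilon$, the function $L\Phi$ is homogeneous of degree $\epsilon-s$, so it suffices to prove $L\Phi\ge 0$ on $\mathcal C_\eta\cap\partial B_1$. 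Throughout I work in the relevant range $\epsilon\in(0,s)$, i.e.\ $s+\epsilon<2s$, which is in any case needed for $L\Phi$ to be defined on the unbounded cone.

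The model case is $\eta=0$, where $\Phi$ becomes $\Phi_0(x)=(e\cdot x)_+^{s+\epsilon}$. The first step is to show that there is $c_\epsilon=c_\epsilon(n,s,\lambda,\Lambda,\epsilon)>0$ with $L\Phi_0(x)\ge c_\epsilon\,(e\cdot x)^{\epsilon-s}$ for every $x$ with $e\cdot x>0$ and every $L\in\mathcal L_*$. Since $\Phi_0$ and $L$ are invariant under translations orthogonal to $e$, and using homogeneity, after rotating so that $e=e_n$ it is enough to evaluate at $x=e_n$. Writing the defining integral in polar coordinates $y=\rho\omega$ and performing the radial integration first, one gets the factorization
\[
L\bigl((x_n)_+^{\beta}\bigr)(e_n)=I(\beta)\int_{S^{n-1}}|\omega\cdot e_n|^{2s}\,\mu(\omega)\,d\omega,\qquad I(\beta):=\int_0^\infty\Bigl(\frac{(1+t)^\beta+(1-t)_+^\beta}{2}-1\Bigr)t^{-1-2s}\,dt,
\]
with $\beta=s+\epsilon$. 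The spherical integral is $\ge c\lambda>0$, and $I(\beta)$ is a universal constant that does not see $\mu$; its defining integral converges precisely because $\beta<2s$. A classical one–variable computation gives $I(s)=0$ (equivalently $(-\Delta)^s(x_n)_+^s\equiv 0$ in the half–space) and $I(\beta)>0$ for $\beta\in(s,2s)$; this is where the hypothesis on the exponent enters. Hence the claim holds with $c_\epsilon=c\lambda\,I(s+\epsilon)$. Moreover, applying the same computation after blowing up at a point $z_0\in\partial\mathcal C_\eta\setminus\{0\}$ — where $\Phi$ is asymptotic to a positive constant times ${\rm dist}(\cdot,\partial\mathcal C_\eta)^{s+\epsilon}$ — shows that $L\Phi(x)\to+\infty$ as $\mathcal C_\eta\ni x\to\partial\mathcal C_\eta$ with $|x|=1$.

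Next I would pass from $\eta=0$ to small $\eta>0$ by compactness, crucially using that the class $\mathcal L_*$ is invariant under dilations. Assume the conclusion fails: for some fixed $\epsilon\in(0,s)$ there are $\eta_k\downarrow 0$, operators $L_k\in\mathcal L_*$ with angular densities $\mu_k$, and points $x_k\in\mathcal C_{\eta_k}\cap\partial B_1$ with $L_k\Phi^{(k)}(x_k)<0$, where $\Phi^{(k)}=(\ell_{\eta_k})_+^{s+\epsilon}$. After a subsequence, $\mu_k\rightharpoonup\mu_\infty$ weakly-$\ast$ in $L^\infty(S^{n-1})$ (hence $L_k\to L_\infty\in\mathcal L_*$), $x_k\to x_\infty\in\partial B_1$ with $e\cdot x_\infty\ge 0$, and $\Phi^{(k)}\to\Phi_0=(e\cdot x)_+^{s+\epsilon}$ locally uniformly with a uniform tail bound $|\Phi^{(k)}(x)|\le C(1+|x|)^{s+\epsilon}$. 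If $e\cdot x_\infty>0$, then $\Phi^{(k)}$ is uniformly $C^2$ near $x_k$ and one may pass to the limit in the nonlocal operators — splitting into $\{|y|<r\}$, where $|\delta\Phi^{(k)}(x_k,y)|\le C|y|^2$ and one freezes the radial integration so that the angular integrand converges uniformly and the weak-$\ast$ convergence of $\mu_k$ can be used, and $\{|y|>r\}$, where the tail bound provides domination — and conclude $0\ge L_\infty\Phi_0(x_\infty)\ge c_\epsilon(e\cdot x_\infty)^{\epsilon-s}>0$, a contradiction. If $e\cdot x_\infty=0$, then $\ell_{\eta_k}(x_k)\le e\cdot x_k\to 0$ forces $d_k:={\rm dist}(x_k,\partial\mathcal C_{\eta_k})\to 0$; rescaling by $d_k$ at the nearest point $z_k\in\partial\mathcal C_{\eta_k}$ and using scale invariance, one has $L_k\Phi^{(k)}(x_k)=d_k^{\epsilon-s}\,(L_k\Psi_k)(w_k)$ with $\Psi_k(w):=d_k^{-(s+\epsilon)}\Phi^{(k)}(z_k+d_k w)$ and $w_k:=(x_k-z_k)/d_k$, $|w_k|=1$; as before $\Psi_k$ converges to a positive multiple of a half–space solution, so $(L_k\Psi_k)(w_k)$ has a strictly positive limit by the model computation, whence $L_k\Phi^{(k)}(x_k)\to+\infty$, again a contradiction. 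This would prove the lemma.

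I expect the main obstacle to be the uniformity in the boundary regime: one must check, with constants independent of $\eta$ for $\eta$ small, that near $\partial\mathcal C_\eta$ the function $\Phi$ is close enough — after rescaling — to the flat model solution that the positive sign of $I(s+\epsilon)$ survives the curvature of $\partial\mathcal C_\eta$; in the scheme above this is precisely what makes the case $e\cdot x_\infty=0$ work. A more routine technical point is the passage to the limit in the operators $L_k$ when only weak-$\ast$ convergence of the kernels is available, which is dealt with by performing the radial integration first so that the remaining angular integrand converges uniformly.
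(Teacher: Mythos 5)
The paper does not actually prove this lemma: it is imported verbatim from \cite{RS-C1}, so there is no internal proof to compare against. Judged on its own merits, your argument is correct and follows the standard route one would expect (and which the cited reference also exploits): the exact factorization $L\bigl((x\cdot e)_+^{\beta}\bigr)(e)=I(\beta)\int_{S^{n-1}}|\omega\cdot e|^{2s}\mu(\omega)\,d\omega$ with the classical sign fact $I(s)=0$, $I(\beta)>0$ for $\beta\in(s,2s)$, the reduction to $\partial B_1$ by $(\epsilon-s)$-homogeneity of $L\Phi$, and a compactness argument in $\eta$ using weak-$*$ compactness of $\{\lambda\le\mu\le\Lambda\}$, split into the interior regime ($e\cdot x_\infty>0$) and the lateral-boundary regime, where the rescaling $\Psi_k(w)=d_k^{-(s+\epsilon)}\Phi^{(k)}(z_k+d_kw)$ together with $L_k\Phi^{(k)}(x_k)=d_k^{\epsilon-s}L_k\Psi_k(w_k)$ turns the blow-up of $L\Phi$ near $\partial\mathcal C_\eta$ into an asset rather than an obstacle. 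Two small points deserve to be written out: (i) the implication ``$e\cdot x_k\to0$ forces $d_k\to0$'' is not automatic from $\ell_{\eta_k}(x_k)$ being small, but follows since $\partial_e\ell_{\eta_k}\ge 1-C\eta_k\ge\frac12$ near $\partial B_1$, giving $d_k\le 2\,\ell_{\eta_k}(x_k)\le 2\,e\cdot x_k$; (ii) the passage to the limit in $L_k\Psi_k(w_k)$ needs a tail bound uniform in $k$, which you get for free from the global Lipschitz bound $|\nabla\ell_{\eta_k}|\le 2$ and $\ell_{\eta_k}(z_k)=0$, namely $\Psi_k(w)\le (2|w|)^{s+\epsilon}$, plus uniform $C^2$ bounds near $w_k=\nu_k$ where the rescaled linear part is bounded away from zero. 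Your restriction to $\epsilon\in(0,s)$ is the correct reading of the statement, since otherwise $L\Phi$ is not defined on the cone.
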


Using the previous Lemma, we now show the following.

\begin{prop}\label{contagi}
Let $u$  be a solution of the obstacle problem \eqref{obstacleproblem2}-\eqref{f}-\eqref{C-1-tau} and assume that $0$ is a regular free boundary point with modulus $\nu$.
Then, there is $r>0$ such that every point on $\partial \{u>0\}\cap B_r$ is regular, with a common modulus of continuity $\tilde\nu$.
\end{prop}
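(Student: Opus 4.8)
The plan is to show that regularity of the free boundary point $0$ is an \emph{open} condition along the free boundary, with a quantitative (uniform) modulus. The starting point is Proposition \ref{free-bdry-Lipschitz}: since $0$ is regular with modulus $\nu$, there is a unit vector $e$ and, for any prescribed small Lipschitz constant $\ell>0$, a radius $r_\ell>0$ such that $\{u>0\}\cap B_{r_\ell}$ is a Lipschitz epigraph in the direction $e$ with constant $\le\ell$, that $\partial_{e'}u\ge 0$ in $B_{r_\ell}$ for all $e'$ in the cone $\{e'\cdot e\ge \ell/\sqrt{1+\ell^2}\}$, and that $\partial_e u\ge c\,r_\ell^{s+\alpha}$ on the ball $B_{r_\ell}(2r_\ell e)\subset\{u>0\}$. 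First I would fix $\epsilon>0$ (with $s+\epsilon<1+s+\alpha$, say $\epsilon<\alpha$) and invoke Lemma \ref{homog-subsol} to get the corresponding opening parameter $\eta=\eta(\epsilon)$; then I choose $\ell=\ell(\eta)$ small enough that the Lipschitz cone of monotonicity of $u$ in $B_{r_\ell}$ contains the cone $\mathcal C_\eta$ associated with $e$ (shifted to be centered at any nearby free boundary point). This is the geometric matching step: the set $\{u>0\}$ locally contains translates of $\mathcal C_\eta$ with vertices on the free boundary.

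Next I would run a barrier argument at an arbitrary free boundary point $z\in\partial\{u>0\}\cap B_{r_\ell/2}$. Around such a $z$, the positivity set $\{u>0\}$ contains the truncated cone $z+\mathcal C_\eta\cap B_\rho$ for $\rho\sim r_\ell$, and by the Harnack inequality together with the nondegeneracy $\partial_e u\ge c\,r_\ell^{s+\alpha}$ coming from Proposition \ref{free-bdry-Lipschitz}, the solution $u$ is bounded below by a positive constant times $\rho^{s+\alpha}$ on an interior ball of $z+\mathcal C_\eta$ at distance $\sim\rho$ from $z$. I then place the subsolution $\Phi$ of Lemma \ref{homog-subsol}, rescaled to the cone $z+\mathcal C_\eta$ and normalized so that $\varepsilon_0\Phi\le u$ on that interior ball and $\varepsilon_0\Phi=0$ outside the cone. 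Since $Lu\le f$ with $\|f\|_\infty\le 1$ and $L\Phi\ge 0$, an appropriate further rescaling (or subtracting a small multiple of a fixed bounded function with $L\,(\cdot)\le -1$) makes $\varepsilon_0\Phi$ a genuine subsolution of the obstacle problem below $u$; the comparison principle then yields
\[
u(x)\ \ge\ c\,\rho^{s+\alpha}\Big(\tfrac{|x-z|}{\rho}\Big)^{s+\epsilon}\qquad\text{for }x\in (z+\mathcal C_\eta)\cap B_\rho,
\]
i.e. $\sup_{B_t(z)}u\ge c\,\rho^{s+\alpha-s-\epsilon}\,t^{s+\epsilon}$ for all $t\le\rho$. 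Because $s+\epsilon<1+s+\alpha$, this forces
\[
\sup_{t'\ge t}\frac{\sup_{B_{t'}(z)}u}{(t')^{1+s+\alpha'}}\ \longrightarrow\ \infty\quad\text{as }t\downarrow 0
\]
for every $\alpha'$ with $s+\epsilon<1+s+\alpha'<2$ (indeed the growth $t^{s+\epsilon}$ beats $t^{1+s+\alpha'}$), so $z$ is a regular free boundary point in the sense of Definition \ref{def-regular}. Moreover the constants $c,\rho,\epsilon$ in the lower bound depend only on $\ell$ (hence on $\eta,\epsilon$), on $\nu$, on $r_\ell$, and on $n,s,\lambda,\Lambda$ — \emph{not} on the particular point $z\in B_{r_\ell/2}$ — so the resulting divergence is uniform: all these points are regular with one common modulus $\tilde\nu(t):=c\,\rho^{s+\alpha-s-\epsilon}\,t^{s+\epsilon-1-s-\alpha}$ (nonincreasing, tending to $\infty$). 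Taking $r=r_\ell/2$ finishes the proof.

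I expect the main obstacle to be the \textbf{barrier/comparison step}: one must carefully choose the normalization so that the rescaled homogeneous subsolution $\Phi$ lies below $u$ everywhere (not just on the interior touching ball), which requires combining the interior lower bound from the cone of monotonicity, the nonlocal tail contribution in the comparison (since $u\ge 0$ everywhere while $\varepsilon_0\Phi$ is compactly supported, the tails help rather than hurt, but this must be accounted for), and the fact that $\varepsilon_0\Phi$ is only an approximate subsolution because of the inhomogeneity $f$ — handled by absorbing $\|f\|_\infty$ into a slightly smaller constant or a fixed perturbation, exactly as in the construction of the simple subsolution $w$ in Step 3 of the proof of Lemma \ref{lemB1}. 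The geometric matching of cones (choosing $\ell$ small enough relative to $\eta$) and the bookkeeping of which constants are allowed to depend on which parameters are routine but must be done with care to guarantee the modulus $\tilde\nu$ is genuinely uniform over $B_r$.
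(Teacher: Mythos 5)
Your overall architecture (cone of monotonicity from Proposition \ref{free-bdry-Lipschitz} at the regular point, geometric matching with the cone $\mathcal C_\eta$ of Lemma \ref{homog-subsol}, then a homogeneous subsolution of degree $s+\epsilon$ placed at an arbitrary nearby free boundary point to force growth and hence regularity with a uniform modulus) is the same as the paper's. But there is a genuine gap at precisely the step you flagged: you run the comparison for $u$ itself and propose to handle the inhomogeneity by ``absorbing $\|f\|_\infty$ into a slightly smaller constant,'' and this does not close, by scaling. Work at scale $r$ around a free boundary point $z$: in the comparison region (inside the cone, hence inside $\{u>0\}$) one only knows $Lu=f$ with $\|f\|_{L^\infty}\le 1$, so after zooming the forcing is of order $r^{2s}$; on the other hand, the interior lower bound you can extract from $\partial_e u\ge c\,r^{s+\alpha}$ is $u\gtrsim r\cdot r^{s+\alpha}=r^{1+s+\alpha}$ on the touching ball (your stated bound $u\gtrsim \rho^{s+\alpha}$ is dimensionally the bound for $|\nabla u|$, not $u$). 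The normalization constant $A$ of the rescaled barrier $A\,\Phi(\cdot-z)$ plus bump must then satisfy $A\gtrsim r^{s-\epsilon}$ (so that $L(\mathrm{barrier})\ge \|f\|_\infty$ in the annular region, as in Lemma \ref{lemB1}) and simultaneously $A\lesssim r^{1+\alpha-\epsilon}$ (so that the barrier stays below $u$ on the bump ball). Since $1+\alpha>s$, these two requirements are incompatible for small $r$: the forcing ($\sim r^{2s}$) is much larger than the size of $u$ ($\sim r^{1+s+\alpha}$, because $1+s+\alpha>2s$), so no choice of $A$ makes the function both a subsolution beating $f$ and a minorant of $u$.

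The paper avoids this by applying the barrier to the directional derivative $\partial_e\tilde u$ rather than to $\tilde u$: one has $L(\partial_e u)=\partial_e f$ in $\{u>0\}$ with $\|\partial_e f\|_{L^\infty}\le 1$ by \eqref{f}, so after zooming the forcing for $\partial_e\tilde u$ is $O(r^{1+2s})$ while the nondegeneracy is $\partial_e\tilde u\gtrsim r^{1+s+\alpha}$ on the bump ball; since $s+\alpha<2s$ these are compatible, the comparison closes, and one gets $\partial_e\tilde u(\tilde x_0+te)\ge c\,t^{s+\epsilon}$, which after integration from the free boundary yields the uniform modulus. Your route could in principle be repaired without switching to the derivative, but only by invoking extra structure you never use: since $u\ge0$ and $u=0$ on the contact set, one has $f\ge Lu\ge 0$ on $\{u=0\}$, hence $f\ge -C r$ in an $r$-neighborhood of the free boundary by the Lipschitz bound in \eqref{f}, and with this refined lower bound on $f$ the two constraints on $A$ become compatible (again because $\alpha<s$). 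As written, however, with only $\|f\|_\infty\le 1$, the key comparison step fails, so the proof has a gap; the rest of the argument (geometric matching, uniformity of constants in $z$, and the conversion of the growth $t^{s+\epsilon}$ into a common modulus $\tilde\nu$) is fine and matches the paper.
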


\begin{proof}
By Proposition \ref{free-bdry-Lipschitz}, there is $e\in S^{n-1}$ such that given $\eta>0$ small  there is $r>0$ for which
\[(x_0 + \mathcal C_\eta)\cap B_{2r}(x_0) \subset \{u>0\} \cap B_{4r}(0) \quad \mbox{for all }x_0 \in  \{u>0\} \cap B_r(0).\]
Here, $\mathcal C_\eta$ is the cone in Lemma \ref{homog-subsol}.

Hence, the rescaled function $\tilde u(x) = u(rx)$ is a solution of the obstacle problem satisfying
\[ (x_0 + \mathcal C_\eta)\cap B_2(\tilde x_0) \subset \{\tilde u>0\} \cap B_{4} \quad \mbox{for all }x_0 \in \{\tilde u>0\} \cap B_{1/4} \]
and,
\[ \bigl|L(\partial_e \tilde u)\bigr| \le C_1 r^{2s} \quad \mbox{ in }\{u>0\} \cap B_4.\]

By Proposition \ref{free-bdry-Lipschitz} we have
\[ \partial_e \tilde u \ge c_2 r^{s+\alpha} >0 \quad\mbox{ in }\overline{B_{1}(2e)} .\]

But from the homogeneous solution $\Phi$ in Lemma \ref{homog-subsol}, which has homogeneity $s+\epsilon$, we build the subsolution
\[ \psi = \Phi \chi_{B_4} + C_3\chi_{B_{1/4}(2e)}.\]
Indeed, if $C_3>1$ is large enough, then $\psi$ satisfies
\[ L\psi \ge 1 \quad \mbox{in } B_1\setminus B_{1/4}(2e) \]
and
\[\psi =0 \quad \mbox{outside }\mathcal C_\eta.\]

We now use  the translated function  $c_2 r^{s+\alpha} \psi(x- \tilde x_0) /C_3$, with $\tilde x_0\in \{\tilde u>0\} \cap B_{1/4}$ as lower barrier.
Taking $r$ small so that $C_1 r^{2s}> c_2 r^{s+\alpha} /C_3$, we will have $\partial_e \tilde u(x) \ge c_2 r^{s+\alpha} \psi(\tilde x_0) /C_3$.
Thus, by the maximum principle,
\[ \partial_e \tilde u \ge c_2 r^{s+\alpha} \psi(x-\tilde x_0) /C_3,\]
and using that $\Phi$ is homogeneous with exponent $s+\epsilon$, we find
\[ \partial_e \tilde u (\tilde x_0+te)  > c t^{s+\epsilon} \]
for $t\in (0,1)$.
In particular, $\|\nabla u\|_{B_t(x_0)}\geq ct^{s+\epsilon}$, and therefore $x_0$ is a regular point (with $\tilde\nu(t)=ct^{\epsilon-\alpha}$).
\end{proof}

Using the previous result, we find the following.

\begin{prop}\label{prop6.5}
Let $u$ be a solution of the obstacle problem \eqref{obstacleproblem2}-\eqref{f}-\eqref{C-1-tau} and assume $0$ is a regular point.
Then, there is $r>0$ such that $\partial \{u>0\}\cap B_r$ is a $C^1$ graph.
\end{prop}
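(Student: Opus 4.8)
The plan is to upgrade the qualitative information obtained so far into a genuine $C^1$ regularity statement for $\partial\{u>0\}$ in a neighborhood of the origin, using that regularity is an open condition with a \emph{uniform} modulus. First I would invoke Proposition \ref{contagi}: there is $r_0>0$ such that every free boundary point $x_0\in\partial\{u>0\}\cap B_{r_0}$ is regular with one fixed modulus $\tilde\nu$, independent of $x_0$. This is the crucial point that makes everything uniform. Then, at each such $x_0$, Proposition \ref{free-bdry-Lipschitz} applies with the \emph{same} modulus $\tilde\nu$, so for every $\ell>0$ there is a single radius $\rho=\rho(\ell,\tilde\nu,n,s,\lambda,\Lambda)>0$ — independent of the particular point $x_0\in\partial\{u>0\}\cap B_{r_0}$ — such that $\{u>0\}\cap B_\rho(x_0)$ is a Lipschitz epigraph in some direction $e(x_0)\in S^{n-1}$ with Lipschitz constant at most $\ell$, and moreover $\partial_{e'}u\ge 0$ in $B_\rho(x_0)$ for all $e'$ with $e'\cdot e(x_0)\ge \ell/\sqrt{1+\ell^2}$.

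Next I would turn this uniform Lipschitz-with-small-constant statement into a $C^1$ statement. The idea is standard for obstacle problems once one has a cone of monotonicity shrinking to a direction: since for \emph{every} $\ell>0$ (hence $\ell$ arbitrarily small) and every boundary point $x_0$ in $B_{r_0/2}$, the positivity set near $x_0$ contains the half-space-like region $\{e'\cdot(x-x_0)>0:\ e'\cdot e(x_0)\ge \ell/\sqrt{1+\ell^2}\}\cap B_\rho(x_0)$, the normal direction $e(x_0)$ is determined up to an error controlled by $\ell$. Concretely: if $x_0,x_1\in\partial\{u>0\}\cap B_{r_0/2}$ are close, comparing the two monotonicity cones (both valid on the overlap of $B_\rho(x_0)$ and $B_\rho(x_1)$, which is nonempty once $|x_0-x_1|$ is small relative to $\rho(\ell)$) forces $|e(x_0)-e(x_1)|\le C\ell$. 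Letting $\ell\downarrow 0$ with the corresponding $\rho(\ell)$, one gets that $x_0\mapsto e(x_0)$ has a modulus of continuity on $\partial\{u>0\}\cap B_r$ for a suitable $r\le r_0/2$, i.e. the outer normal to the epigraph varies continuously. Since the free boundary is locally a Lipschitz graph $\bar x_n=g(\bar x')$ whose gradient direction is exactly (a normalization of) the projection of $e(x_0)$, continuity of $e(\cdot)$ translates into continuity of $\nabla g$, so $g\in C^1$ and $\partial\{u>0\}\cap B_r$ is a $C^1$ graph.

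The main obstacle is the bookkeeping in passing from "$\ell$-Lipschitz for every $\ell$, on a ball of radius $\rho(\ell)$ that shrinks as $\ell\to0$" to a genuine modulus of continuity for the normal: one must check that the radius $\rho(\ell)$ does not degenerate so fast that the cone-comparison at nearby points becomes vacuous. This is handled by the uniformity from Proposition \ref{contagi} (the modulus $\tilde\nu$ is common to all points), which guarantees $\rho(\ell)$ depends only on $\ell$ and the fixed structural constants, not on the point; then for $|x_0-x_1|\le\rho(\ell)/4$ the overlap argument is legitimate and yields $|e(x_0)-e(x_1)|\le C\ell$, which is precisely a modulus of continuity $\omega(t)$ with $\omega(\rho(\ell)/4)\le C\ell$. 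A secondary technical point is to make sure the direction $e(x_0)$ produced by Proposition \ref{free-bdry-Lipschitz} is canonically defined (e.g. as the limit of the blow-up direction), so that the map $x_0\mapsto e(x_0)$ is well-defined; this follows from the classification of blow-ups in Theorem \ref{thmclassif} together with Proposition \ref{proprescalings}.
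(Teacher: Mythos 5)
Your proposal is correct and follows essentially the same route as the paper: Proposition \ref{contagi} gives a common modulus $\tilde\nu$ for all nearby free boundary points, and then Proposition \ref{free-bdry-Lipschitz} is applied at every such point with $\ell\searrow 0$, yielding uniform $C^1$ regularity of the free boundary. The cone-comparison step you spell out (controlling $|e(x_0)-e(x_1)|$ by $\ell$ at scale $\rho(\ell)$) is exactly the implicit content of the paper's closing sentence, just made explicit.
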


\begin{proof}
By Propositions \ref{free-bdry-Lipschitz} and \ref{contagi} there is $r>0$ such that $\Gamma := \partial\{u>0\}\cap B_r$ is a Lipschitz graph (in some direction $e$) and every point in $\Gamma$ is a regular point.
Moreover, by Proposition \ref{free-bdry-Lipschitz} we have $\partial_e u \ge 0$ in $B_r$ and $\partial_e u$ is not identically $0$ in $B_r$ ---otherwise $\Gamma$ would not be contained in the free boundary.
Furthermore, by Proposition \ref{contagi}, all points in $\Gamma$ are regular points with a common modulus of continuity.

Thus, applying again Proposition \ref{free-bdry-Lipschitz} ---now  at every regular point $x_0\in \Gamma$ and with $\ell\searrow 0$--- we find that $\{u>0\}$ is $C^1$ at every point $x_0\in \Gamma$, with a uniform modulus of continuity.
\end{proof}

\subsection{Proof of Theorem \ref{th2}}

We now prove the optimal $C^{1,s}$ regularity of solutions and the $C^{1,\gamma}$ regularity of free boundaries.
For this, we will need the following from \cite{RS-C1}.

\begin{thm}[\cite{RS-C1}]\label{thmC1}
Let $s\in(0,1)$ and $\gamma\in(0,s)$.
Let $L\in\mathcal L_*$, and $\Omega$ be any $C^1$ domain.

Then, there exists is $\delta>0$, depending only on $\gamma$, $n$, $s$, $\Omega$, and ellipticity constants, such that the following statement holds.

Let $v_1$ and $v_2$, be viscosity solutions of
\begin{equation}\label{pb-C1}
\left\{ \begin{array}{rcll}
L v_i &=&g_i&\textrm{in }B_1\cap \Omega \\
v_i&=&0&\textrm{in }B_1\setminus\Omega,
\end{array}\right.\end{equation}
Assume that $\|g_i\|_{L^\infty(B_1\cap \Omega)}\leq C_0$, $\|v_i\|_{L^\infty(\R^n)}\leq C_0$,
\[g_i\geq-\delta\quad\textrm{in}\quad B_1\cap \Omega,\]
and that
\[v_i\geq0\quad\mbox{in}\quad \R^n,\qquad \sup_{B_1}v_i\geq 1.\]
Then,
\[ \|v_1/v_2\|_{C^\gamma(\Omega\cap B_{1/2})} \le CC_0,\qquad \gamma\in(0,s),\]
where $C$ depends only on $\gamma$, $n$, $s$, $\Omega$, and ellipticity constants.
\end{thm}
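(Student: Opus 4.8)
\medskip
\noindent\textbf{A proof strategy for Theorem~\ref{thmC1}.}
The plan is to prove Theorem~\ref{thmC1} by localizing near a boundary point and establishing a geometric decay of the oscillation of the quotient $v_1/v_2$ on dyadic balls centered there. After translating, assume the relevant point is $0\in\partial\Omega$. The rescaling $x\mapsto\rho x$ turns $Lv_i=g_i$ into $L\tilde v_i=\rho^{2s}\tilde g_i$, so (after the corresponding renormalization of $v_i$) the right-hand side only shrinks as $\rho\downarrow0$; hence, using that $\Omega$ is $C^1$, we may assume $\Omega\cap B_1$ is trapped between the two half-space-like cones $\{x_n>\epsilon|x|\}$ and $\{x_n>-\epsilon|x|\}$ with $\epsilon$ as small as we wish (controlled by $\delta$ and the modulus of $\Omega$), while $\|g_i\|_{L^\infty}$ is as small as we wish and $\|v_i\|_{L^\infty(\R^n)}\le C_0$ keeps the nonlocal tails under control uniformly across scales.

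First I would establish the two-sided bound $c\,d^s(x)\le v_i(x)\le C\,d^s(x)$ in $B_{1/2}\cap\Omega$, where $d(x)={\rm dist}(x,\partial\Omega)$ and $v_i$ is normalized by its value at an interior corkscrew point $A\in\Omega\cap B_{1/2}$ with $d(A)\sim1$ (which, under the hypotheses $\sup_{B_1}v_i\ge1$, $\|v_i\|_{L^\infty}\le C_0$ and $g_i$ small, satisfies $v_i(A)\sim1$ by interior Harnack chains). The upper bound comes from supersolution barriers of the form $C(x_n+\epsilon|x|)_+^{s-\epsilon}$; the lower bound from the homogeneous subsolutions of Lemma~\ref{homog-subsol} (homogeneity $s+\epsilon$, supported in a slightly narrower cone that, by flatness, sits inside $\Omega$) together with the interior Harnack inequality; the small right-hand side is absorbed into the barriers. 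As an immediate consequence $0<c\le v_1/v_2\le C$ in $B_{1/2}\cap\Omega$ (this is also Proposition~\ref{bdryHarnackC1}), and $v_1(A_r)/v_2(A_r)$ stays comparable along all dyadic scales $r\le\frac12$.

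Next comes the improvement of oscillation. Fix $r\le\frac14$, set $m_r=\inf_{B_r\cap\Omega}v_1/v_2$ and $M_r=\sup_{B_r\cap\Omega}v_1/v_2$, and consider the nonnegative functions $w^-=v_1-m_rv_2$ and $w^+=M_rv_2-v_1$ on $B_r\cap\Omega$. Since $m_r,M_r$ are bounded, $Lw^\pm$ is bounded and, crucially, $Lw^\pm\ge-C\delta$ --- this is exactly where the one-sided hypothesis $g_i\ge-\delta$ is used. At the corkscrew $A_{r/2}$ one has $w^-(A_{r/2})+w^+(A_{r/2})=(M_r-m_r)v_2(A_{r/2})$, so one of them, say $w^-$, is $\ge\frac12(M_r-m_r)v_2(A_{r/2})$; applying the $d^s$-barriers of the previous step to $w^-$ (which has small RHS) together with the interior Harnack inequality gives $w^-\ge\theta(M_r-m_r)v_2$ in $B_{r/4}\cap\Omega$ for a fixed $\theta\in(0,1)$, hence $M_{r/4}-m_{r/4}\le(1-\theta)(M_r-m_r)$. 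Iterating yields $v_1/v_2\in C^{\gamma_0}(\Omega\cap B_{1/4})$ for some small $\gamma_0>0$, with the asserted bound $CC_0$. To upgrade $\gamma_0$ to an arbitrary $\gamma\in(0,s)$ one exploits that a $C^1$ domain is arbitrarily flat at small scales, via a compactness--contradiction argument: if the estimate failed, there would be $\delta_k\downarrow0$, domains $\Omega_k$ with $\delta_k$-small $C^1$ modulus, and solutions whose rescaled--normalized quotients have $C^\gamma$-seminorm $1$ but are uncontrolled; by the uniform $C^s$ up-to-the-boundary estimates of \cite{RS-K} and the bounds above one extracts limits $V_1,V_2\ge0$ with $LV_i=0$ in $\{x_n>0\}$, $V_i=0$ in $\{x_n\le0\}$ and subcritical growth, so by the half-space Liouville classification (cf.\ \cite{RS-stable}) $V_i=c_i(x_n)_+^s$; then $V_1/V_2\equiv c_1/c_2$, contradicting that its $C^\gamma$-seminorm is $1$. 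The dependence $\delta=\delta(\gamma)$ appears because the normalization producing seminorm $1$ and the range of scales over which $\Omega$ must look flat are dictated by $\gamma$, and the scheme degenerates as $\gamma\uparrow s$ since for merely $C^1$ domains the remainder $v_i-c_id^s$ is only $o(d^s)$, with a logarithmic-type rate.

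The main obstacle is this last step: making the compactness argument quantitative and compatible with the full range $\gamma<s$ --- in particular, proving that the limiting half-space solutions are precisely multiples of $(x_n)_+^s$ under the weakest growth one can afford, and controlling the nonlocal tails of the rescaled functions uniformly (via $\|v_i\|_{L^\infty}\le C_0$ and a weighted $L^1$ normalization) so that the equations pass to the limit. The earlier steps are, by contrast, fairly routine nonlocal adaptations of the Caffarelli--Fabes--Mortola--Salsa boundary-Harnack iteration, once the barriers of Lemma~\ref{homog-subsol} and the $C^s$ boundary estimates of \cite{RS-K} are available.
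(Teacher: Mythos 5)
This paper does not actually prove Theorem \ref{thmC1}: it is quoted verbatim from \cite{RS-C1} and used as a black box (in the proof of Theorem \ref{th2}), so there is no internal proof to compare your proposal against; your sketch therefore has to stand on its own, and as it stands it has genuine gaps. The most serious one is in the oscillation-decay step: the functions $w^-=v_1-m_rv_2$ and $w^+=M_rv_2-v_1$ are nonnegative only in $B_r\cap\Omega$ (and zero outside $\Omega$), not in all of $\R^n$, so neither the interior Harnack inequality nor Proposition \ref{bdryHarnackC1} (which explicitly requires $u_i\ge0$ in $\R^n$) can be applied to them as you do. Because the operator is nonlocal, the possibly negative values of $w^\pm$ outside $B_r$ enter the equation inside $B_{r/2}$ as an extra right-hand side of size comparable to the oscillation of $v_1/v_2$ at all larger scales; controlling these tail terms so that the geometric decay $M_{r/4}-m_{r/4}\le(1-\theta)(M_r-m_r)$ survives is precisely the main difficulty of any nonlocal boundary Harnack argument, and your sketch does not address it at all. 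Relatedly, the claim $Lw^\pm\ge-C\delta$ does not follow from the one-sided hypothesis $g_i\ge-\delta$ alone: $Lw^-=g_1-m_rg_2\ge-\delta-m_rC_0$, and $g_2$ may be of size $C_0$; to make the right-hand sides small you must first rescale and renormalize, which requires a uniform nondegeneracy lower bound on $v_i$ at the corkscrew points of every scale — a point you use but do not establish independently of the estimate you are trying to prove.

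Two further issues. First, the two-sided bound $c\,d^s\le v_i\le C\,d^s$ is not available in a merely $C^1$ domain: the barriers you invoke (supersolutions of homogeneity $s-\epsilon$, the subsolutions of Lemma \ref{homog-subsol} of homogeneity $s+\epsilon$) only give $c\,d^{s+\epsilon}\le v_i\le C\,d^{s-\epsilon}$, and exact comparability with $d^s$ is a $C^{1,\gamma}$ phenomenon (compare Theorem \ref{thmC1alpha}, which requires $C^{1,\gamma}$); your iteration should be rewritten so that it only uses comparability of $v_1$ with $v_2$, not of either with $d^s$. Second, you candidly identify the upgrade from some small $\gamma_0$ to every $\gamma\in(0,s)$ — via compactness, blow-up, and the Liouville classification $V_i=c_i(x_n)_+^s$ in the half-space — as the main open obstacle; that compactness scheme is indeed the method of the cited reference \cite{RS-C1}, but since it is exactly where the stated range $\gamma\in(0,s)$ and the dependence $\delta=\delta(\gamma)$ are produced, leaving it at the level of a sketch means the proposal does not yet amount to a proof of the theorem as stated.
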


We will also need the following result.

\begin{thm}[\cite{RS-C1}]\label{thmC1alpha}
Let $s\in(0,1)$ and $\gamma\in(0,s)$.
Let $L\in\mathcal L_*$, $\Omega$ be any $C^{1,\gamma}$ domain, and $d$ be the distance to $\partial\Omega$.
Let $v$ be any solution to
\begin{equation}\label{pb-C-1-gamma}
\left\{ \begin{array}{rcll}
L v &=&g&\textrm{in }B_1\cap \Omega \\
v&=&0&\textrm{in }B_1\setminus\Omega,
\end{array}\right.\end{equation}
Then,
\[\|v/d^s\|_{C^{\gamma}(B_{1/2}\cap\overline\Omega)}\leq C\left(\|g\|_{L^\infty(B_1\cap\Omega)}+\|v\|_{L^\infty(\R^n)}\right).\]
The constant $C$ depends only on $n$, $s$, $\Omega$, and ellipticity constants.
\end{thm}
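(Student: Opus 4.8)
The plan is to prove the estimate by the blow-up/Liouville strategy for the boundary behaviour of $v/d^s$, adapted to the nonlocal operators of the class $\mathcal L_*$. First I would establish the a priori bound $|v(x)|\le C_0'\,d(x)^s$ in $B_{3/4}\cap\Omega$, where $C_0':=C\bigl(\|g\|_{L^\infty(B_1\cap\Omega)}+\|v\|_{L^\infty(\R^n)}\bigr)$. Since $\Omega$ is $C^{1,\gamma}$, at every point $z\in\partial\Omega\cap\overline{B_{3/4}}$ and every small scale $r$ the set $\Omega\cap B_r(z)$ is trapped between two convex cones of the type $\mathcal C_\eta$ of Lemma~\ref{homog-subsol}, with $\eta$ as small as we wish. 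Using the homogeneous subsolution $\Phi$ of Lemma~\ref{homog-subsol} --- together with an analogous supersolution of homogeneity $s-\epsilon$ vanishing outside a slightly wider cone, obtained by the same computation --- as barriers centred at $z$, and the interior estimates of \cite{CS} to absorb the contributions of $g$ and of the far tails of $v$, one obtains the two-sided bound $v\sim d^s$ near $\partial\Omega$; combined with interior estimates this yields the bound on all of $B_{3/4}\cap\Omega$.

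The heart of the matter is a boundary expansion: for each $z\in\partial\Omega\cap\overline{B_{1/2}}$ I would produce a number $Q(z)$ with $|Q(z)|\le CC_0'$ and
\[ \|v-Q(z)\,d^s\|_{L^\infty(B_r(z))}\le CC_0'\,r^{s+\gamma}\qquad\textrm{for all }r\in(0,\tfrac14). \]
This follows from a dyadic iteration: letting $Q_r(z)$ be the coefficient of $d^s$ that best fits $v$ at scale $r$, one shows by a compactness--contradiction argument that passing from scale $r$ to scale $r/2$ improves the approximation by a fixed geometric factor. In that argument the rescaled functions $w_k(x):=\lambda_k^{-1}\bigl(v(z+r_kx)-Q_{r_k}(z)\,d(z+r_kx)^s\bigr)$, with $\lambda_k$ the natural normalization and $r_k\downarrow0$, are shown --- using the interior $C^{1,\tau}$ estimates of \cite{CS} and a uniform growth bound at infinity inherited from the earlier scales of the iteration --- to converge in $C^1_{\rm loc}(\R^n)$ to a global solution of $Lw=0$ in a half-space $\{x\cdot e>0\}$, with $w=0$ in $\{x\cdot e\le0\}$ and subcritical growth. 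Here the $C^{1,\gamma}$ regularity of $\partial\Omega$ is exactly what makes $d^s$ an approximate solution: after a cutoff, $L(d^s)$ near the boundary has size $\lesssim C_0'\,d^{-s-\gamma}$, hence is harmless at the scale $r^{s+\gamma}$. The Liouville-type classification --- such a $w$ must be a multiple of $(x\cdot e)_+^s$ --- then forces the limit to have been ``already subtracted'' and gives the improvement; summing the geometric series produces the expansion above and, in particular, the limit $\lim_{x\to z}d(x)^{-s}v(x)=Q(z)$.

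Given the expansion, the $C^\gamma$ dependence of $z\mapsto Q(z)$ follows by comparing expansions at nearby boundary points: for $z_1,z_2\in\partial\Omega\cap\overline{B_{1/2}}$ set $r=|z_1-z_2|$ and subtract the expansions on $B_{2r}(z_1)$; since at scale $r$ the function $d^s$ coincides, up to an error $O(r^{s+\gamma})$, with the $s$-th power of the distance to a common tangent hyperplane, evaluating at a point $x\in B_{2r}(z_1)$ with $d(x)\sim r$ gives $|Q(z_1)-Q(z_2)|\le CC_0'\,r^\gamma$. For the interior contribution, take $x\in\Omega\cap B_{1/2}$ with $\rho:=d(x)$, apply the interior estimates of \cite{CS} to $v$ (which solves $Lv=g$ in $B_{\rho/2}(x)$) and to the smooth function $d^s$, use the boundary expansion at the nearest point of $\partial\Omega$ to get $\|v\|_{L^\infty(B_{\rho/2}(x))}\le CC_0'\rho^s$, and rescale to obtain $[v/d^s]_{C^\gamma(B_{\rho/4}(x))}\le CC_0'$. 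Patching this interior H\"older estimate with the boundary expansion of the two previous steps in the standard way yields $\|v/d^s\|_{C^\gamma(\overline\Omega\cap B_{1/2})}\le CC_0'$, which is the claim.

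The main obstacle is the second step. One has to arrange the compactness argument so that the rescaled functions carry a uniform growth bound at infinity --- otherwise their limits need not be global solutions --- and one must control carefully the nonlocal tails generated by the rescaling; and one needs the half-space Liouville theorem for operators $L\in\mathcal L_*$, in the spirit of Theorem~4.1 of \cite{RS-stable} quoted above for the one-dimensional case. The other delicate point is that $\partial\Omega$ is only assumed $C^{1,\gamma}$, so $d$ is not $C^{1,1}$ and $d^s$ is merely an approximate solution of $Lw=0$; this is precisely what pins down the exponent $\gamma$ in the estimate and forces the error $L(d^s)$ near $\partial\Omega$ to be absorbed via the cone barriers of the first step.
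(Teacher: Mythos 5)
This statement is not proved in the paper at all: Theorem \ref{thmC1alpha} is imported verbatim from the reference \cite{RS-C1}, so there is no in-paper argument to compare yours against. Your sketch does follow essentially the same strategy as the proof in that reference (boundary expansion $v=Q(z)\,d^s+O(r^{s+\gamma})$ obtained by a best-coefficient dyadic improvement, compactness, and a half-space Liouville theorem of the type ``$w=K(x\cdot e)_+^s$'', with the $C^{1,\gamma}$ regularity entering through the fact that $d^s$ is an approximate solution), so as an outline it is the right plan rather than a genuinely different route. Three points need correcting, though. First, the exponent in the key error estimate is wrong as written: what makes the scheme work is $|L(d^s)|\lesssim d^{\gamma-s}$ near $\partial\Omega$ (for a suitably regularized distance), which after rescaling at scale $r$ contributes $r^{s-\gamma}\cdot r^{\gamma-s}=O(1)$; a bound of size $d^{-s-\gamma}$, as you wrote, would give $r^{-2\gamma}$ and would \emph{not} be harmless at the scale $r^{s+\gamma}$. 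Second, ``the two-sided bound $v\sim d^s$'' cannot hold: $v$ has no sign, so the correct step-1 statement is only $|v|\le C_0' d(x)^s$. Third, the cone barriers of homogeneity $s\pm\epsilon$ from Lemma \ref{homog-subsol} by themselves only yield $|v|\lesssim d^{s-\epsilon}$; to reach the clean $d^s$ bound one again needs the $L(d^s)=O(d^{\gamma-s})$ computation (e.g.\ via a supersolution of the form $d^s$ plus a lower-order correction), or one must let the iteration in your second step produce the $d^s$ bound as part of its conclusion.
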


We now give the:

\begin{proof}[Proof of Theorem \ref{th2}]
After subtracting the obstacle $\varphi$ and dividing by a constant, $u$ satisfies \eqref{obstacleproblem2}-\eqref{f}-\eqref{C-1-tau}.

According to our definition of regular points, if $x_0$ is a free boundary point which is not regular then (ii) holds for all $\alpha\in(0,s)$ satisfying $1+s+\alpha<2$.
Note that after subtracting the obstacle we have $\varphi\equiv0$.

We next show that (i) holds at every regular point $x_0$, and that the set of regular points is relatively open and the free boundary is $C^{1,\gamma}$ for all $\gamma\in(0,s)$ near each regular point.

First, by Proposition \ref{prop6.5} the set of regular points is relatively open and the free boundary is a $C^1$ near these regular points. Let $x_0$ a regular point. After a rotation we may assume that $e_n$ is the unit inwards normal to $\{u>0\}$ at $x_0$.

\vspace{2mm}

{\em Step 1.} Let us prove that the free boundary is $C^{1,\gamma}$ near $x_0$.
Let $v_2 = \partial_n u$ and $v_1 = 2\partial_n u +\partial_i u$, where $1\le i\le n-1$.
We will use Proposition \ref{thmC1} to show that, for some $r>0$ we have
\begin{equation}\label{quocCalpha}
\left\| \frac{v_1}{v_2}\right\|_{C^{\gamma}(\{u>0\}\cap B_r(x_0))} =
\left\|2 + \frac{\partial_i u}{\partial_n u}\right\|_{C^{\gamma}(\{u>0\}\cap B_r(x_0))}  \le C.
\end{equation}
This will imply that the normal vector $\nu(x)$ to the level set $\{u=t\}$ for $t>0$ and $u(x)=t$, which is given by,
\[ \nu^i (x)  = \frac{\partial_i u }{|\nabla u|} (x) = \frac{\partial_i u/ \partial_n u}{ \sqrt{\sum_{j=0}^{n-1} (\partial_j u/ \partial_n u)^2  +1} }  \]
\[ \nu^n (x)  = \frac{\partial_n u }{|\nabla u|} (x) = \frac{1}{ \sqrt{\sum_{j=0}^{n-1} (\partial_j u/ \partial_n u)^2  +1} }  \]
satisfies $|\nu(x)-\nu(\bar x)|\le C |x-\bar x|^{\gamma}$ whenever $x,\bar x\in \{u=t\}\cap B_r(x_0)$, with $C$ independent of $t>0$.
Therefore, letting $t\downarrow 0$, we will find that $\partial \{u>0\}\cap \cap B_r(x_0)$ is a $C^{1,\gamma}$ graph, and Step 1 will be completed.

It remains to show \eqref{quocCalpha}.
To prove it, notice that we have $Lv_1=g_1$ and $Lv_2=g_2$ in $\Omega=\{u>0\}$, and $v_1=v_2=0$ in $\Omega^c$, with $|g_i|\leq C$.
Moreover, by Proposition \ref{free-bdry-Lipschitz},  $v_i\geq0$ in $B_r(x_0)$ and $\sup_{B_r(x_0)} v_i\geq cr^s$.
Thus, the rescaled functions $w_1(x):=Cr^{-s}v_1(x_0+rx)\chi_{B_{r_0}}(rx)$ and $w_2(x)=Cr^{-s}v_2(x_0+rx)\chi_{B_{r_0}}(rx)$, for $r>0$ small enough, satisfy the hypotheses of Theorem \ref{thmC1}.
Therefore, we get that
\[\left\| \frac{w_1}{w_2}\right\|_{C^{\gamma}(\Omega\cap B_{r_0/2})}\le C,\]
and thus \eqref{quocCalpha} follows.

\vspace{2mm}

{Step 2.} By Step 1, the domain $\{u>0\}\cap B_r(x_0)$ is $C^{1,\gamma}$.
Thus, we can apply Theorem \ref{thmC1alpha} to the partial derivatives $\partial_i u$, where $1\le i\le n$, to obtain that
\[ \|\partial_i u/d^s\|_{C^\gamma(\Omega\cap B_{1/2})}\le CC_0.\]
This implies that
\[ u(x) = c d^{1+s}(x) + o(|x-x_0|^{1+s+\gamma}).\]
Since $u\geq0$ then $c\geq0$, and since by $u$ does not satisfy (ii) then $c>0$.
\end{proof}

\section{Fully nonlinear equations: Classification of global solutions}
\label{sec-fully-1}

In this section we classify global convex solutions to the obstacle problem for fully nonlinear operators.

Throughout the section, we denote
\[M^+u:=M^+_{\mathcal L_*}u=\sup_{L\in \mathcal L_*}Lu,\]
and $M^-=M^-_{\mathcal L_*}$.
Moreover, $\bar\alpha>0$ is the constant given by Definition \ref{defi-baralpha}.
Recall that $\bar \alpha$ stays positive as $s\to 1$.

We establish two classification results.
The first one is the following, and will correspond to the case (i) in Theorem \ref{th3}.

\begin{thm}\label{thmclassif-fully}
Let $\gamma\in(0,1)$ and $\Omega\subset \R^n$ be a closed convex set which is {\em not} contained in the strip $\{-C\le e'\cdot x\le C\}$ for any $e'\in S^{n-1}$ and any $C>0$.

Let $\alpha\in(0,\bar\alpha)$ be such that $1+s+\alpha<2$.
Let $u\in C^{1}(\R^n)$ be a function satisfying
\begin{equation}\label{eqnliouville-fully}\begin{cases}
M^+(\partial_e u ) \geq 0 \ge M^-(\partial_e u )   \quad &\mbox{in } \R^n\setminus\Omega\quad \mbox{for all } e\in S^{n-1}.\\
u=0 & \mbox{in }\Omega\\
D^2 u \ge 0 \quad &\mbox{in } \R^n\\
u\ge 0 & \mbox{in } \R^n.
\end{cases}\end{equation}
Assume that $u$ satisfies the following growth control
\begin{equation}\label{growthctrolgradient-fully}
\|\nabla u\|_{L^\infty(B_R)} \le R^{s+\alpha} \quad \mbox{ for all }R\ge 1.
\end{equation}
Then, either $u \equiv 0$, or
\[ \Omega =\{e\cdot x\le 0\}\quad \mbox{ and }\quad u(x) = K (e\cdot x)_+^{1+s} \]
for some $e\in S^{n-1}$ and $K>0$.
\end{thm}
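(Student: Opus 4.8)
The plan is to follow the blueprint of the proof of Theorem \ref{thmclassif}, replacing each linear tool by a fully nonlinear counterpart. The algebraic point that makes this possible is the convexity of $\I=\sup_a(L_a+c_a)$: differentiating the equation shows that every directional derivative $\partial_\xi u$, $\xi\in\R^n$, satisfies the two--sided extremal inequalities $M^+(\partial_\xi u)\ge0\ge M^-(\partial_\xi u)$ in $\{u>0\}$ (here one uses that $M^\pm$ are positively $1$--homogeneous to pass from unit $e$ to arbitrary $\xi$), and the convexity of $\I$ likewise yields $M^+(u-u(\cdot-h))\ge0\ge M^-(u-u(\cdot-h))$ in $\{u>0\}$ --- these are the exact analogues of the linear conditions $L(\nabla u)=0$ and $L(u-u(\cdot-h))\ge0$ in \eqref{eqnliouville}, and both pass to $C^1_{\rm loc}$ blow-up/blow-down limits, the class $\mathcal L_*$ being fixed. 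With this in mind I would start, assuming $u\not\equiv0$ and (after a translation) $0\in\partial\Omega$, by performing the blow-down of Theorem \ref{thmclassif}: with $\theta(R)=\sup_{R'\ge R}(R')^{-s-\alpha}\|\nabla u\|_{L^\infty(B_{R'})}$ and suitable $R'_m\to\infty$, the functions $u_m(x)=u(R'_mx)/\bigl((R'_m)^{1+s+\alpha}\theta(R'_m)\bigr)$ converge --- along a subsequence, by Proposition \ref{first-regularity-u-fully} (note $M^+_{\mathcal L_0}\ge M^+$, so $u-u(\cdot-h)$ is admissible for it) and Arzel\`a--Ascoli --- to a convex $u_\infty\ge0$ with $\|\nabla u_\infty\|_{L^\infty(B_R)}\le R^{s+\alpha}$, $\|\nabla u_\infty\|_{L^\infty(B_1)}\ge\tfrac12$, vanishing on the closed convex cone $\Sigma=\bigcap_k\Omega/R'_k$, and satisfying $M^+(\partial_e u_\infty)\ge0\ge M^-(\partial_e u_\infty)$ and $M^+(u_\infty-u_\infty(\cdot-h))\ge0\ge M^-(u_\infty-u_\infty(\cdot-h))$ in $\R^n\setminus\Sigma$.

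I would then split according to the size of $\Sigma$. If $\Sigma$ has empty interior it is contained in a hyperplane $\{e'\cdot x=0\}$, and I would argue as in Proposition \ref{propclassif2}: since $v:=u_\infty-u_\infty(\cdot-h)\in C^{1,\tau}_{\rm loc}$, subtracting a small multiple of the two--tent barrier $\exp(-|e'\cdot x|)+\exp(-|e'\cdot(x-h)|)$ of Lemma \ref{supersol-fully} (whose wedges lie on $\{e'\cdot x=0\}\cup\{e'\cdot x=e'\cdot h\}\supset\Sigma\cup(\Sigma+h)$) produces a function with a strict negative wedge on $\Sigma\cup(\Sigma+h)$, which therefore cannot be touched from above there; letting $\varepsilon\downarrow0$ upgrades $M^+v\ge0\ge M^-v$ to hold in all of $\R^n$. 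Because $1+s+\alpha<2$ forces $s+\alpha<1$, $v$ has strictly sublinear growth, so by the Liouville theorem for the extremal operators $v$ is constant; hence $u_\infty$ is affine, and being convex, nonnegative and vanishing at $0$ it must be $u_\infty\equiv0$, contradicting $\|\nabla u_\infty\|_{L^\infty(B_1)}\ge\tfrac12$. Thus $\Sigma$ has nonempty interior; the hypothesis that $\Omega$ is not contained in a strip is what places us in the regime where this (rather than the degenerate/paraboloid behaviour of case (ii) of Theorem \ref{th3}) is the relevant one. When $\Sigma$ has nonempty interior its complement is connected, so the interior Harnack inequality for solutions of $M^+v\ge0\ge M^-v$ together with $D^2u_\infty\ge0$ gives $\Sigma=\{u_\infty=0\}=\{\nabla u_\infty=0\}$.

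It remains to classify $u_\infty$ when $\Sigma$ is a convex cone with nonempty interior, which I would do by induction on $n$ exactly as in Proposition \ref{propclassif}. First a dimension--reduction step: for $z\in\partial\Sigma\cap\partial B_1$ a subsolution argument based on the lower bound $\partial_e u_\infty\ge c\,d_\Sigma^s$ underlying \cite{RS-K} shows that the rescaled quantity $\theta_z(r)\to\infty$ as $r\downarrow0$, so the blow--up of $u_\infty$ at $z$ is a nonzero limit invariant in the direction $z$; by Lemma \ref{convex-1D} it depends on only $n-1$ variables and solves the same problem in dimension $n-1$ with contact cone the tangent cone $\Sigma_z$ (which has nonempty interior there), so by the inductive hypothesis $\Sigma_z$ is a half--space. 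Since $\Sigma$ is convex, having a half--space tangent cone at every boundary point away from $0$ forces $\partial\Sigma\in C^1$ away from $0$. Once $\Sigma$ is a $C^1$ cone I would prove the rigidity that replaces Theorem \ref{thm-uniqueness}: for vectors $\xi,\xi'$ with $-\xi/|\xi|,-\xi'/|\xi'|\in\Sigma$, the functions $\partial_\xi u_\infty,\partial_{\xi'}u_\infty$ are nonnegative, vanish on $\Sigma$ (where $\nabla u_\infty=0$), and solve the extremal inequalities; applying the $C^1$--domain boundary Harnack inequality of \cite{RS-C1} (valid for such equations) at every dyadic scale as in the proof of Theorem \ref{thm-uniqueness}, together with the maximality argument $\bar c=\sup\{c:\partial_{\xi'}u_\infty\ge c\,\partial_\xi u_\infty\}$ and the key observation that $\partial_{\xi'}u_\infty-\bar c\,\partial_\xi u_\infty=\partial_{\xi'-\bar c\xi}u_\infty$ is again such a solution, one concludes $\partial_{\xi'}u_\infty\equiv\bar c\,\partial_\xi u_\infty$. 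As these directions span $\R^n$, $\nabla u_\infty$ is everywhere parallel to a fixed $e\in S^{n-1}$, whence $u_\infty=\phi(e\cdot x)$, $\Sigma=\{e\cdot x\le0\}$, and the one--dimensional analysis (as in \cite{RS-stable}) gives $\phi'(t)=K(t)_+^s$, i.e.\ $u_\infty=K(e\cdot x)_+^{1+s}$. Finally, since $\Sigma\subset\Omega$ is a half--space and $\Omega$ lies in a supporting half--space through $0$, one gets $\Omega=\{e\cdot x\le0\}$; running the $C^1$--cone argument now directly for $u$ itself (its contact set being the half--space $\Omega$) yields $u=K(e\cdot x)_+^{1+s}$ with $K>0$.

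I expect the main obstacle to be the $C^1$--cone rigidity, the fully nonlinear substitute for Theorem \ref{thm-uniqueness}: there is no single linear equation to subtract, so the whole comparison has to be carried out inside the class of functions obeying $M^+v\ge0\ge M^-v$, which is not a linear space. What rescues it is that $\xi\mapsto\partial_\xi u_\infty$ is linear while $M^\pm$ are positively homogeneous, so every combination $\partial_{\xi'}u_\infty-\bar c\,\partial_\xi u_\infty$ remains an admissible solution; one must nonetheless check that the boundary Harnack of \cite{RS-C1} and the interior Harnack of \cite{CS} for extremal equations are robust enough to reproduce each step of the proofs of Lemma \ref{lemB1} and Theorem \ref{thm-uniqueness}. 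A secondary point requiring care is the precise use of the ``no strip'' hypothesis to rule out the remaining non--half--space cones, which is absorbed into the barrier argument of Proposition \ref{propclassif2}.
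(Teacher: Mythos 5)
Your main line of argument coincides with the paper's: blow down to reduce to a convex cone $\Sigma$, run a dimension-reduction/blow-up at lateral points (using Lemma \ref{convex-1D} and induction on $n$) to show $\partial\Sigma$ is $C^1$ away from the origin, and then apply a boundary-Harnack rigidity to the directional derivatives, using exactly the observation that $a\partial_{e_1}u+b\partial_{e_2}u$ is again a directional derivative, so every combination satisfies $M^+\ge 0\ge M^-$; this rigidity is precisely the paper's Proposition \ref{thm-uniqueness-fully}, proved via the boundary Harnack for equations with bounded measurable coefficients in $C^1$ domains from \cite{RS-C1}. (A minor slip: the final one-dimensional classification in the fully nonlinear setting is \cite[Proposition 5.1]{RS-K}, not the result of \cite{RS-stable}, which is for a single linear operator.)

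The genuine gap is your treatment of the case in which the blow-down cone $\Sigma$ has empty interior. You reduce to $v=u_\infty-u_\infty(\cdot-h)$ satisfying $M^+v\ge0\ge M^-v$ in all of $\R^n$ with growth $R^{s+\alpha}$, and then invoke ``the Liouville theorem for the extremal operators'' to conclude $v$ is constant. No such theorem is available at this growth: the pair of extremal inequalities is an equation ``with bounded measurable coefficients'', whose interior regularity is only $C^{\bar\alpha}$ (\cite{CS}), so the corresponding Liouville result requires growth of exponent below $\bar\alpha$, whereas $s+\alpha>s$ is in general much larger than $\bar\alpha$. In the linear case (Proposition \ref{propclassif2}) this step works because the two inequalities combine into the single translation-invariant equation $Lv=0$, whose regularity applies up to growth $2s>s+\alpha$; in the fully nonlinear case they do not combine, and the only substitute is the Evans--Krylov-type Liouville theorem of Proposition \ref{liouville}, which needs both the stronger structural hypothesis (inequalities for averaged translations $\ave u(\cdot-h)\,d\mu(h)$, i.e.\ the full convexity of $\I$, not single differences) and the more restrictive growth $\|\nabla u\|_{L^\infty(B_R)}\le R^{2s+\beta-1}$ with $2s+\beta<2$, which is not implied by $R^{s+\alpha}$ when $1+s+\alpha\ge 2s+\bar\alpha$. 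This is exactly why the paper splits the classification into Theorems \ref{thmclassif-fully} and \ref{thmclassif-fully2} and why case (ii) appears in Theorem \ref{th3}; if your short argument were correct, the Appendix would be unnecessary. The paper instead uses the ``no strip'' hypothesis at precisely this point, to assert that the blow-down cone has nonempty interior so that the empty-interior case never has to be classified within Theorem \ref{thmclassif-fully} (in the intended application this is backed by the density condition \eqref{non-singular}); in your write-up that hypothesis plays no effective role, which is the symptom of the problem. To repair the proof, derive the nonempty interior of $\Sigma$ from the hypothesis on $\Omega$ rather than attempting a Liouville classification in the strip case under $R^{s+\alpha}$ growth.
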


The second classification result, stated next, corresponds to case (ii) in Theorem~\ref{th3}.

\begin{thm}\label{thmclassif-fully2}
Let $\Omega\subset \R^n$ be a closed convex set which is contained in the strip $\{-C\le e'\cdot x\le C  \}$ for some $e'\in S^{n-1}$ and $C>0$.

Let $\beta\in(0,\bar\alpha)$ be such that $2s+\beta<2$.
Let $u\in  {\rm Lip}_{\rm loc}(\R^n)$ be a function satisfying
\begin{equation}\label{eqnliouville-fully21}
M^+\left( u - \ave u(\cdot-h)\,d\mu(h) \right) \geq 0 \quad \mbox{in }\R^n \setminus \Omega
\end{equation}
for every measure $\mu\ge0$ with compact support and $\mu(\R^n)=1$.
Assume in addition that
\begin{equation}\label{eqnliouville-fully22}
\begin{cases}
D^2 u \ge 0 \quad &\mbox{in } \R^n\\
u=0 & \mbox{in }\Omega\\
u\ge 0 & \mbox{in } \R^n,
\end{cases}\end{equation}
and that $u$ satisfies the growth control
\begin{equation}\label{growthctrol-fully}
\|\nabla u\|_{L^{\infty}(B_R)} \le R^{2s+\beta-1} \quad   \mbox{ for all }R\ge 1.
\end{equation}
Then, $u \equiv 0$.
\end{thm}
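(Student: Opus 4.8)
The plan is to argue that $u$ must in fact be a (convex, subquadratic) quadratic polynomial, and then rule out everything but $u\equiv 0$ by the growth control together with the fact that $\Omega$ lies inside a slab. The key structural gain over Theorem \ref{thmclassif} is that here we do not have a cone of directional derivatives solving a translation-invariant equation; instead we have the weaker incremental inequality \eqref{eqnliouville-fully21}. So the first step is to upgrade regularity: by the interior $C^{2s+\gamma}$ estimate for convex equations built into Definition \ref{defi-baralpha}, applied on balls $B_R$ and rescaled using \eqref{growthctrol-fully}, one gets $u\in C^{2s+\gamma}_{\rm loc}(\R^n)$ with a controlled growth of the $C^{2s+\gamma}$ seminorm — in particular $u$ is twice differentiable where $2s>1$, but more importantly all the second incremental quotients of $u$ are locally bounded. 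This is the analogue of the $C^{1,\tau}_{\rm loc}$ step in the proof of Proposition \ref{propclassif2}.

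Next I would show that for every measure $\mu\ge 0$ with compact support and mass $1$, the function $v_\mu := u - \ave u(\cdot-h)\,d\mu(h)$ satisfies $M^+ v_\mu \ge 0$ in \emph{all} of $\R^n$ (not just $\R^n\setminus\Omega$). This is exactly the barrier trick used in the proof of Proposition \ref{propclassif2}: since $\Omega$ is contained in a slab $\{|e'\cdot x|\le C\}$, the function $\phi(x)=\exp(-|e'\cdot x|)$ from Lemma \ref{supersol-fully} is a supersolution with a strictly positive wedge along any hyperplane parallel to that slab; subtracting $\varepsilon\phi$ from $v_\mu$ produces a function with a negative wedge on $\Omega$, so it cannot be touched from above there by a $C^2$ function, and one evaluates the equation off $\Omega$ to conclude $M^+(v_\mu-\varepsilon\phi)\ge -C\varepsilon$, then lets $\varepsilon\downarrow 0$. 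Applying this with $\mu=\frac12(\delta_h+\delta_{-h})$ gives $M^+\bigl(u-\tfrac12 u(\cdot-h)-\tfrac12 u(\cdot+h)\bigr)\ge 0$ in $\R^n$, i.e. $M^-(\partial_{hh}u)\le 0$ in a suitable incremental sense, while convexity $D^2u\ge 0$ gives the lower bound for free; more generally, applying it to translates shows every incremental second difference of $u$ is a subsolution of $M^+$ globally, with subquadratic growth inherited from \eqref{growthctrol-fully} (the increments grow like $|h|R^{2s+\beta-1}$ on $B_R$, and $2s+\beta-1<2s$). By the Liouville-type / interior estimate for $M^+$-subsolutions with subquadratic growth, each such increment is affine in $x$; hence all third-order increments of $u$ vanish, and $u$ is a quadratic polynomial.

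Finally, since $u$ is a convex quadratic polynomial with $u\ge 0$, $u=0$ on $\Omega$, and (by $2s+\beta<2$, so $s+\beta<2-s<2$, giving $\|\nabla u\|_{L^\infty(B_R)}\le R^{2s+\beta-1}=o(R)$) at most linear — in fact sublinear — gradient growth, $u$ must be constant, hence $u\equiv u|_\Omega=0$. I expect the main obstacle to be the second step: making the passage from the single inequality \eqref{eqnliouville-fully21} to a clean statement ``$M^+$ of every incremental second difference is $\ge 0$ globally and the difference has subquadratic growth'' fully rigorous in the viscosity sense — in particular justifying that the slab-supported exponential barrier genuinely has a positive wedge transverse to $\Omega$ on the relevant hyperplanes, and that one may take $\varepsilon\downarrow 0$ in the viscosity framework (this is precisely the delicate point already handled in Lemma \ref{supersol-fully} and the proof of Proposition \ref{propclassif2}, so it should go through, but it is where the care is needed). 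Everything after that is the routine Liouville argument plus elementary convex analysis.
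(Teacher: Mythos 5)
Your overall plan follows the paper's skeleton (extend the inequality across the contact set with the exponential barrier, then invoke a Liouville-type statement), but two steps as you describe them have genuine gaps. First, you skip the blow-down reduction and apply the wedge barrier directly to $\Omega$. The barrier $\phi(x)=\exp(-|e'\cdot x|)$ of Lemma \ref{supersol-fully} has its concave kink only on the single hyperplane $\{e'\cdot x=0\}$, not throughout the slab; a convex set contained in a slab may have nonempty interior (even be the whole slab), and at points of $\Omega$ off that hyperplane $\phi$ is smooth, so $v_\mu-\varepsilon\phi$ \emph{can} be touched from above there and you have no equation to evaluate. This is exactly why the paper first blows down: the blow-down of a slab-contained $\Omega$ is a convex cone of zero measure, hence contained in a hyperplane, and only then does the wedge argument of Proposition \ref{propclassif2} cover every point of the zero set. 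The reduction to a cone is therefore essential, not a convenience you can drop.

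Second, and more seriously, your Liouville step is unsupported. From $\mu=\delta_{\pm h}$ you do correctly get the two-sided inequalities $M^-_{\mathcal L_0}(u-u(\cdot-h))\le 0\le M^+_{\mathcal L_0}(u-u(\cdot-h))$ in all of $\R^n$ (this is Step 1 of the Appendix proof), but a function satisfying only these ``bounded measurable coefficients'' inequalities with growth $O\bigl(|h|R^{2s+\beta-1}\bigr)$ is \emph{not} affine: the available interior regularity/Liouville exponent for such two-sided Pucci inequalities is only $\bar\alpha$, which is in general far below $2s+\beta-1$. (In the linear case of Proposition \ref{propclassif2} one genuinely has $Lv=0$ for a fixed translation-invariant $L$, and the growth $s+\alpha<2s$, which is why that argument closes; neither feature is available here.) The paper instead needs the full strength of Proposition \ref{liouville}, an Evans--Krylov-type Liouville theorem whose proof (Appendix) crucially exploits the convexity of $u$ together with the inequality \eqref{eqnliouville-fully21} for \emph{all} measures $\mu$ (i.e.\ the concavity of the extremal operator), via the quantities $P^0,N^0$, the nonlocal $L^\varepsilon$ estimate and the half-Harnack inequality. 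Your proposal has no substitute for this machinery, so the passage from the extended inequalities to ``$u$ is a quadratic polynomial'' is a real missing step; the concluding elementary argument (sublinear gradient growth plus convexity forces $u\equiv0$) is fine once that is in place. Your preliminary claim that $u\in C^{2s+\gamma}_{\rm loc}$ by the convex-equation interior estimate is also not justified (those estimates are for solutions of the equation, not for the obstacle problem), but it is unnecessary: the $C^{1,\tau}$ regularity of Proposition \ref{first-regularity-u-fully} is what the barrier step actually uses.
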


We next prove Theorems \ref{thmclassif-fully} and \ref{thmclassif-fully2}.

\subsection{Cones with nonempty interior}

We prove here Theorem \ref{thmclassif-fully}.
To prove it, we need the following.

\begin{prop}\label{thm-uniqueness-fully}
Let $\Sigma\subset \R^n$ be any convex cone with nonempty interior, with vertex at $0$, and such that $\partial \Sigma$ is $C^1$ away from $0$.
Let $v_1,v_2$ be functions in $C(\R^n)$ satisfying
\[\int_{\R^n} v_i(y) (1+|y|)^{-n-2s}\,dy  < \infty.\]
Assume that $v_1,v_2$ satisfy, for each $A,B\in\R$,
\begin{equation}\label{eqn1space}\begin{cases}
M^+(av_1+bv_2)  \geq 0,\quad M^-(av_1+bv_2)\leq0  \quad &\mbox{in } \R^n\setminus \Sigma\\
v_i=  0 & \mbox{in }\Sigma\\
v_i> 0 & \mbox{in } \R^n \setminus \Sigma.
\end{cases}\end{equation}
Then,
\[ v_1 \equiv K v_2 \quad \mbox{ in } \R^n\]
for some $K>0$.
\end{prop}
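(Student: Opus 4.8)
The plan is to reproduce, essentially line by line, the proof of Theorem \ref{thm-uniqueness}, the only conceptual change being a reinterpretation of the hypothesis. Indeed, condition \eqref{eqn1space} says that for every $a,b\in\R$ the function $w=av_1+bv_2$ satisfies $M^+w\ge0\ge M^-w$ in $\R^n\setminus\Sigma$; this means precisely that $w$ is a (viscosity) solution of a linear nonlocal equation ``with bounded measurable coefficients'' in the sense of \cite{CS} (pointwise, $0\in[M^-w(x),M^+w(x)]$ selects a kernel $K_w(x,\cdot)$ comparable to $|y|^{-n-2s}$ with $\int\bigl(\tfrac{w(x+y)+w(x-y)}{2}-w(x)\bigr)K_w(x,y)\,dy=0$). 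Consequently $v_1$, $v_2$, and all their linear combinations enjoy the interior Harnack inequality and the interior and boundary regularity estimates for such equations, and the boundary Harnack inequality in $C^1$ domains from \cite{RS-C1} applies to the pair $(v_1,v_2)$ exactly because $av_1+bv_2$ is such a solution for every $a,b$. Wherever the proof of Theorem \ref{thm-uniqueness} used the linearity ``$L(u_1-\bar c\,u_2)=0$'', we will instead use that ``$v_2-\bar c\,v_1$ has bounded measurable coefficients'', which is granted by \eqref{eqn1space}.

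First I would establish the analogue of Lemma \ref{lemB1}: under the normalization $\int_{\R^n}v_i(y)(1+|y|)^{-n-2s}\,dy=1$ one has $v_1\ge c\,v_2$ and $v_2\ge c\,v_1$ in $\overline{B_1}$. As there, fix $P$ with $|P|=1$ and $B_1(P)\subset\R^n\setminus\Sigma$ and argue in three steps. Step~1: $v_i\sim1$ in $\overline{B_{1/4}(P)}$, the upper bound coming from the interior estimate (Theorem 5.1 in \cite{CS3}) and the lower bound from the dichotomy ``most of the weighted mass of $v_i$ lies in $B_{3/4}(P)$'' (conclude by interior Harnack) versus ``most of it lies outside $B_{3/4}(P)$'' (conclude by comparing $v_i$ from below with a nonlocal barrier formed by a small smooth bump plus $v_i$ truncated to the region carrying its mass; the comparison is legitimate because this barrier is a subsolution of $M^-_{\mathcal L_*}$, hence of every measurable-coefficient linearization). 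Step~2: since $B_2\setminus\Sigma$ is a $C^1$ domain near $\partial\Sigma\cap(B_{3/2}\setminus B_{1/2})$, the boundary Harnack inequality in $C^1$ domains gives $0<c\le v_1/v_2\le C$ in the annulus $B_{3/2}\setminus B_{1/2}$. Step~3: patching across the vertex of $\Sigma$, the function $w=v_2\chi_{B_1}+C\chi_{B_{1/4}(P)}$ is, for $C$ large, a nonlocal subsolution in $B_{1/2}\setminus\Sigma$ (the normalization controls the mass of $v_2$ removed outside $B_1$), it vanishes in $\Sigma$ and outside $B_{3/2}$, and by Step~2 it satisfies $v_1\ge c\,w$ in $B_{3/2}\setminus B_{1/2}$; the comparison principle then yields $v_1\ge c\,w\ge c\,v_2$ in all of $B_1$, and symmetrically $v_2\ge c\,v_1$.

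Next, using that $M^+_{\mathcal L_*}$ and $M^-_{\mathcal L_*}$ are homogeneous of degree $2s$ and that $\Sigma$ is a cone, I would rescale $\bar v_i(x)=v_i(Rx)/C_i$ (normalized), apply the previous lemma at every scale $R\ge1$, and match the normalizing constants at $P/R$ to conclude that $C_1$ and $C_2$ are comparable; hence $v_1\ge c\,v_2$ and $v_2\ge c\,v_1$ in all of $\R^n$, with $c$ depending only on $n$, $s$, $\Sigma$ and ellipticity. Finally, set $\bar c=\sup\{c>0:\ v_2\ge c\,v_1\ \text{in }\R^n\}$, which is finite and positive by the previous step, and let $v=v_2-\bar c\,v_1$. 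By \eqref{eqn1space} (with $a=-\bar c$, $b=1$) $v$ satisfies $M^+v\ge0\ge M^-v$ in $\R^n\setminus\Sigma$, and $v\ge0$, so by interior Harnack $v$ is either $\equiv0$ or strictly positive in $\R^n\setminus\Sigma$. In the second case $v/v(P)$ and $v_1$ satisfy the hypotheses of the proposition, so the global comparability just proved gives $v\ge\delta\,v_1$ for some $\delta>0$, contradicting the definition of $\bar c$ (as then $v_2\ge(\bar c+\delta)v_1$). Hence $v\equiv0$, i.e. $v_2\equiv\bar c\,v_1$, which is the assertion.

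The main obstacle is Step~2: what is needed is a boundary Harnack inequality in $C^1$ domains valid not for a single fixed operator $L\in\mathcal L_*$ (as literally stated in Proposition \ref{bdryHarnackC1}) but for a pair $v_1,v_2$ for which only the two-sided information $M^+(av_1+bv_2)\ge0\ge M^-(av_1+bv_2)$ is available — that is, the ``bounded measurable coefficients'' version of Proposition \ref{bdryHarnackC1}. This is the content of the boundary regularity results of \cite{RS-C1}, and the work consists in checking that their hypotheses hold here (in particular the $C^1$ regularity of $B_2\setminus\Sigma$ away from the vertex, which is assumed). A secondary, purely technical point — already visible in Steps~1 and 3 — is the bookkeeping of the correct extremal operator in each barrier comparison: since on $v_i$ one controls only $M^-_{\mathcal L_*}v_i\le0$ and $M^+_{\mathcal L_*}v_i\ge0$, lower barriers must be constructed as subsolutions of $M^-_{\mathcal L_*}$ and upper barriers as supersolutions of $M^+_{\mathcal L_*}$, so that they are sub/supersolutions of the (unknown) measurable-coefficient linearization of $v_i$ and the comparison principle applies.
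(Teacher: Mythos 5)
Your proposal is correct and follows essentially the same route as the paper: the paper's own proof simply says to repeat the argument of Theorem \ref{thm-uniqueness} verbatim, with the single change of invoking the boundary Harnack principle in $C^1$ domains for equations ``with bounded measurable coefficients'' (Theorem 1.6 of \cite{RS-C1}) in place of Proposition \ref{bdryHarnackC1}, which is exactly the substitution you identify as the main point. Your extra bookkeeping of sub/supersolutions of $M^{\pm}_{\mathcal L_*}$ in the barrier comparisons is the correct way to make that substitution rigorous and is consistent with what the paper leaves implicit.
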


\begin{proof}
The proof is exactly the same as the one of Proposition \ref{thm-uniqueness}.
The only difference is that here we need to use a boundary Harnack principle in $C^1$ domains for equations with bounded measurable coefficients, given by Theorem 1.6 in \cite{RS-C1}.
\end{proof}

We give now the:

\begin{proof}[Proof of Theorem \ref{thmclassif-fully}]
The proof is essentially the same as the one given in Section~\ref{sec4}, using Proposition \ref{thm-uniqueness-fully} instead of Theorem \ref{thm-uniqueness}.
First, by a blow-down argument we only need to show the case in which $\Omega$ is a cone $\Sigma$.
Since $\Omega$ is not contained in any strip $\{|e'\cdot x|\leq C\}$ then the cone $\Sigma$ has nonempty interior.

Then, by a dimension-reduction argument we get that the cone $\Sigma$ is $C^1$ outside the origin.
This is done by a blow-up argument on lateral points of the cone.

Finally, since the cone $\Sigma$ is $C^1$, then any two derivatives $v_1=\partial_{e_1}u$ and $v_2=\partial_{e_2}u$, with $-e_i\in \Sigma$, satisfy \eqref{eqn1space}.
By Proposition \ref{thm-uniqueness-fully} we get that all such derivatives are equal up to multiplicative constant, and this yields that $\Sigma=\{x\cdot e\leq 0\}$ for some $e\in S^{n-1}$.
Finally, by the classification result \cite[Proposition 5.1]{RS-K} we get $u(x)=c(x\cdot e)_+^{1+s}$, and the Theorem is proved.
\end{proof}

\subsection{Cones with empty interior}

We next state the Liouville theorem that serves to prove $C^{2s+\gamma}$ interior regularity of concave fully nonlinear equations.
We will use it for convex equations but we state it for concave to obtain a more easily comparable statement and proof to that of Theorem 2.1 in \cite{S-convex}.
Also, we denote
\[2s=\sigma\]

\begin{prop}\label{liouville}
Let $\sigma_0\in(0,2)$ and $\sigma\in [\sigma_0,2)$.
There is $\bar \alpha>0$ depending only on $n$, $\sigma_0$, and ellipticity constants such that the following statement holds.

Let $\gamma\in(0,\bar\alpha)$.
Assume that $u\in {\rm Lip}_{\rm loc}(\R^n)$ satisfies the following properties.
\begin{enumerate}
\item[(i)] There exists $C_1>0$ such that for all for all $R\ge1$ we have
\[  [u]_{{\rm Lip}(B_R)}\le C_1 R^{\sigma+ \gamma-1}\]

\item[(ii)] We have
\[  D^2 u \ge 0 \quad \mbox{and} \quad u\ge 0 .\]

\item[(iii)] For every nonnegative measure $\mu$ in $\R^n$ with compact support and  $\int_{\R^n} \mu(h)\, dh =1$ we have, in the viscosity sense,
\[ M^+_{\mathcal L_0}  \left(\ave u(\,\cdot\,+h) \mu(h)\,dh- u\right)\ge 0\quad \mbox{in }\R^n.\]
\end{enumerate}
Then, either $u\equiv 0$  or $\sigma+\gamma>2$ and $u$ is a quadratic polynomial.
\end{prop}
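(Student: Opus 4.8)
The plan is to follow the scheme of \cite[Section~2]{S-convex}, exchanging the roles of sub- and supersolutions. Since the statement is only ever applied to blow-ups, I would first add the harmless normalization $u(0)=|\nabla u(0)|=0$; granting this, it suffices to prove that $u$ is a polynomial of degree at most two, for then $u\ge 0$, $D^2u\ge 0$ and the vanishing of $u$ to first order at $0$ force $u$ to be a nonnegative convex quadratic, hence $u\equiv 0$ unless $D^2u\ne 0$, in which case $[u]_{{\rm Lip}(B_R)}\sim R$ and hypothesis (i) forces $\sigma+\gamma>2$.

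The first step is to exploit hypothesis (iii) with Dirac masses. Taking $\mu=\delta_h$ and $\mu=\delta_{-h}$ and using translation invariance of the operators in $\mathcal L_0$, one checks that the first order incremental quotient $w_h:=u(\,\cdot\,+h)-u$ solves $M^-_{\mathcal L_0}w_h\le 0\le M^+_{\mathcal L_0}w_h$ in $\R^n$, i.e.\ it is a solution of an equation ``with bounded measurable coefficients''. Since $\sigma+\gamma-1<\sigma=2s$ (because $\gamma<1$), the growth bound (i) keeps the nonlocal tail of $w_h$ finite at every scale, so the rescaled interior $C^{\bar\alpha}$ estimate (the first item in Definition \ref{defi-baralpha}) gives $[w_h]_{C^{\bar\alpha}(B_R)}\le C|h|R^{\sigma+\gamma-1-\bar\alpha}$ for $R\ge 1$; dividing by $|h|$ and letting $h\to 0$ yields $u\in C^{1,\bar\alpha}_{\rm loc}(\R^n)$ together with $[\nabla u]_{C^{\bar\alpha}(B_R)}\le CR^{\sigma+\gamma-1-\bar\alpha}$. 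In particular, if $\sigma+\gamma<1+\bar\alpha$ this seminorm tends to $0$ as $R\to\infty$, hence $\nabla u$ is constant and, after the normalization, $u\equiv 0$; so from now on one may assume $\sigma+\gamma\ge 1+\bar\alpha$.

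The heart of the argument is this remaining range, and here convexity enters through the second incremental quotient: by (iii) with $\mu=\tfrac12(\delta_h+\delta_{-h})$, the function $\delta_h^2u:=u(\,\cdot\,+h)+u(\,\cdot\,-h)-2u$ is nonnegative and satisfies $M^+_{\mathcal L_0}(\delta_h^2u)\ge 0$ in $\R^n$. Combining this one-sided information with the $C^{1,\bar\alpha}$ bound above and a blow-down/compactness argument --- in which rescalings of $u$ converge to a global solution of a \emph{linear} homogeneous translation invariant equation $Lw=0$, $L\in\mathcal L_*$, and such $w$ are $C^\infty$ in the interior since derivatives of solutions of $Lw=0$ are again solutions --- one shows that, after subtracting a \emph{fixed} polynomial $q_\infty$ of degree $\le 2$, the remainder $u-q_\infty$ has gradient with sub-$|x|^{\bar\alpha}$ growth; subtracting such a polynomial is legitimate because it only alters the ``right hand side'' in (iii) by an affine function, invisible to $M^+_{\mathcal L_0}$. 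Running the first step on $u-q_\infty$ then forces $\nabla(u-q_\infty)$ to be constant, and the normalization makes it $0$, so $u\equiv q_\infty$, a polynomial of degree at most two. The main obstacle is precisely this last step: unlike for linear, or for second order convex/concave, equations, the bounded-measurable-coefficient equation satisfied by $\nabla u$ does not self-improve --- its coefficients change under differentiation --- so the gain of regularity at each scale must be squeezed out of the interplay of the one-sided inequality $\delta_h^2u\ge 0$, the synthetic equation (iii), and the smoothness of solutions to the frozen translation invariant operators, all while controlling the nonlocal tails uniformly in the scale, exactly as in \cite[Section~2]{S-convex}.
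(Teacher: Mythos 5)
Your Step 1 (testing (iii) with Dirac masses at $\pm h$ to get $M^-_{\mathcal L_0}(u-u(\cdot-h))\le 0\le M^+_{\mathcal L_0}(u-u(\cdot-h))$, hence a $C^{1,\bar\alpha}$-type bound with the scaling $[\nabla u]_{C^{\bar\alpha}(B_R)}\le CR^{\sigma+\gamma-1-\bar\alpha}$, which settles the range $\sigma+\gamma<1+\bar\alpha$) is sound and coincides with Step 1 of the paper. But the part you yourself call the heart of the argument is missing, and the route you sketch for it would not work as stated. You assert that blow-downs of $u$ converge to a global solution of a \emph{linear} translation-invariant equation $Lw=0$, $L\in\mathcal L_*$, and that smoothness of such limits lets you subtract a fixed quadratic $q_\infty$ so that $\nabla(u-q_\infty)$ has sub-$|x|^{\bar\alpha}$ growth. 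Compactness under hypotheses (i)--(iii) only reproduces the same class of objects: a limit again satisfies the two-sided bounded-measurable-coefficient inequalities for its increments and the synthetic convex inequalities (iii); no mechanism is given by which the limit becomes a solution of a single linear operator, and even if it did, qualitative smoothness of a blow-down does not by itself transfer a quantitative $C^{\sigma+\bar\alpha}$-type decay back to $u$ at unit scale without an improvement-of-flatness scheme that you have not set up. This is precisely the Evans--Krylov obstruction: for these equations one cannot linearize by perturbation/compactness, and $C^{1,\bar\alpha}$ is the best one gets from the bounded-measurable-coefficient theory alone.

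What the paper actually does at this point is quantitative and measure-theoretic, and none of it (nor a substitute) appears in your proposal. It first proves a $W^{\sigma,\infty}$ bound $[u]_{W^{\sigma,\infty}(B_R)}\le CR^\gamma$ (Step 2), using convexity, touching by paraboloids and Lemma 3.3 of \cite{CS} to evaluate the extremal operators classically; it then introduces the weighted positive/negative parts $P^0$, $N^0$ of $\delta^2u(x,y)-\delta^2u(0,y)$, shows via the two-sided equations for increments, Alexandrov's theorem and convexity that $\frac{\lambda}{\Lambda}P^0\le N^0\le\frac{\Lambda}{\lambda}P^0$ pointwise a.e., and runs a dyadic decay iteration $P^0\le 4^{k\bar\alpha}$ in $B_{4^k}$, $k\le -1$, powered by the nonlocal $L^\varepsilon$ estimate \cite[Theorem 10.4]{CS} and the half Harnack inequality \cite[Theorem 5.1]{CS3} applied to carefully constructed auxiliary subsolutions $v^t$, $w^t$; a final scaling argument forces $P^0\equiv N^0\equiv 0$, i.e.\ $u$ quadratic. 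Your closing appeal to ``exactly as in \cite[Section~2]{S-convex}'' does not fill this hole either: as the paper points out, \cite[Theorem 2.1]{S-convex} assumes $u\in C^{2s+\epsilon}$ a priori, and the whole purpose of this appendix is the adaptation to merely Lipschitz convex $u$, which is exactly where the convexity/Alexandrov devices above are needed. So the proposal reduces to the correct first step plus an acknowledged but unresolved gap at the core of the proof.
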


In the case that  $u\in C^{2s+\epsilon}$ for some $\epsilon>0$ this follows from Theorem 2.1 in \cite{S-convex}.
Here, we give a variation that applies to any convex solution (that is, only Lipschitz a priori).
The proof of this result is differed to the Appendix.

We give here the:

\begin{proof}[Proof of Theorem \ref{thmclassif-fully2}]
By the exact same blow-down argument from Section \ref{sec4}, we may assume that $\Omega$ is a cone $\Sigma$.
Moreover, since $\Omega$ is contained in a strip $\{|e'\cdot x|\leq C\}$ then $\Sigma$ has zero measure.

Given a measure $\mu$ with compact support and unit mass we consider
\[v =  u- \ave u(\,\cdot\,+h) \mu(h)\,dh,\]
which satisfies $M^+ v \ge 0$ in $\R^n\setminus \Sigma$ is the viscosity sense.
Since $u$ is locally $C^{1,\tau}$ ---by Proposition \ref{first-regularity-u-fully}--- and since $\mu$ has compact support and unit mass, then $v$ is also locally $C^{1,\tau}$. Then using the supersolution $\phi$ of Lemma \ref{supersol-fully}, which satisfies
\[M^+\phi \le C \quad \mbox{in all of }\R^n\]
we obtain that
\[ M^+(v- \varepsilon \phi) \ge -C\varepsilon\quad \mbox{in all of }\R^n\]
for all $\varepsilon >0$.

Thus, taking $\varepsilon\downarrow 0$, we find that $u$ satisfies all the assumptions of Proposition \ref{liouville}.
It follows that $u$ is a quadratic polynomial. Since $u\ge0$, $u= 0$ on $\Sigma$, and $u$ has subquadratic growth by assumption, it must be $u\equiv 0$.
\end{proof}

\section{Fully nonlinear equations: Regularity of solutions and free boundaries}
\label{sec-fully-2}

Using Theorems \ref{thmclassif-fully} and \ref{thmclassif-fully2}, we now study the regularity of solutions and free boundaries.

Recall that the solution $u$ to \eqref{obst-pb-fully} can be constructed as the smallest supersolution lying above the obstacle and being nonnegative at infinity.
Thus, exactly as in Section \ref{sec2} we find the following.

\begin{lem}\label{semiconvex-fully}
Let $I$ be any operator of the form \eqref{I}, $\varphi$ be any obstacle satisfying \eqref{obstacle}, and $u$ be the solution to \eqref{obst-pb-fully}.
Then,
\begin{itemize}
\item[(a)] $u$ is semiconvex, with \[\hspace{40mm}\partial_{ee}u\ge -C\qquad \textrm{for all}\ e\in S^{n-1}.\]
\item[(b)] $u$ is bounded, with \[\|u\|_{L^\infty(\R^n)}\leq C.\]
\item[(c)] $u$ is Lipschitz, with \[\|u\|_{\rm Lip(\R^n)}\leq C.\]
\end{itemize}
The constants $C$ depend only on $\|\varphi\|_{C^{1,1}(\R^n)}$.
\end{lem}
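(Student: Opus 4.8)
The plan is to mirror, almost verbatim, the argument used to prove Lemma~\ref{semiconvex} in Section~\ref{sec2}, exploiting the fact that the solution $u$ to \eqref{obst-pb-fully} is again characterized as the least supersolution lying above $\varphi$ and nonnegative at infinity. Concretely: if $v$ satisfies $-\I v\ge 0$ in $\R^n$, $v\ge\varphi$ in $\R^n$, and $\liminf_{|x|\to\infty}v(x)\ge0$, then $v\ge u$. The only structural facts about $\I$ that we need are that it is translation invariant (each $L_a\in\mathcal L_*$ is, and the constants $c_a$ do not spoil this), that it commutes with affine additive perturbations in the sense that $\I(v+$const$)=\I v$ because $\I 0=0$ forces $\sup_a c_a=0$, and — crucially for part (a) — the \emph{subadditivity on averages} coming from $\I$ being a sup of linear operators: $\I\!\left(\tfrac{v(\cdot+h)+v(\cdot-h)}{2}\right)\ge \tfrac12\I v(\cdot+h)+\tfrac12 \I v(\cdot-h)$, by superadditivity of the supremum.

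First I would prove (a). Fix $h\in\R^n$ and set $v(x)=\tfrac{u(x+h)+u(x-h)}{2}+C|h|^2$ with $C=\|\varphi\|_{C^{1,1}(\R^n)}$. Translation invariance gives $-\I u(\cdot\pm h)\ge0$, hence by the superadditivity property $-\I v=-\I\!\left(\tfrac{u(\cdot+h)+u(\cdot-h)}{2}\right)\ge0$ in $\R^n$; the semiconcavity estimate $\varphi(x)\le\tfrac{\varphi(x+h)+\varphi(x-h)}{2}+C|h|^2$ together with $u\ge\varphi$ gives $v\ge\varphi$; and $v$ is nonnegative at infinity since $u$ is. By minimality $v\ge u$, which after dividing by $|h|^2$ and letting $h=te\to0$ yields $\partial_{ee}u\ge -C$ for all $e\in S^{n-1}$. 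For (b), take $v\equiv\|\varphi\|_{L^\infty}$, which satisfies $-\I v=0$ (since $\I$ annihilates constants), $v\ge\varphi$, and $\liminf_{|x|\to\infty}v\ge0$; minimality gives $u\le\|\varphi\|_{L^\infty}$, and the lower bound $u\ge\varphi\ge-\|\varphi\|_{L^\infty}$, which combined with $\{\varphi>0\}\subset\subset\R^n$ and $u\to0$ at infinity actually gives $u\ge0$, hence $\|u\|_{L^\infty}\le\|\varphi\|_{L^\infty}$. For (c), the function $v(x)=u(x+h)+C|h|$ with $C=\|\varphi\|_{\mathrm{Lip}}$ is, by translation invariance, a supersolution above $\varphi$ (using $\varphi(x)\le\varphi(x+h)+C|h|$) and nonnegative at infinity, so $v\ge u$, i.e.\ $u(x)-u(x+h)\le C|h|$; swapping $h\to-h$ gives the full Lipschitz bound.

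The one point that needs a little care — and which I expect to be the main (minor) obstacle — is making the comparison-with-supersolutions argument rigorous in the fully nonlinear, viscosity setting: one must check that $v=\tfrac12(u(\cdot+h)+u(\cdot-h))+C|h|^2$ is genuinely a viscosity supersolution of $-\I v\ge0$. This follows because $u(\cdot\pm h)$ are viscosity supersolutions (translation invariance of viscosity solutions is standard), and the half-sum of two viscosity supersolutions of a sublinear operator $-\I=\inf_a(-L_a-c_a)$ is again a viscosity supersolution — at a point where a smooth test function touches the half-sum from below, one can use the semiconvexity already available (or an inf-convolution regularization) to produce test functions touching each translate, then use $-\I(\tfrac{\psi_1+\psi_2}{2})\ge\tfrac12(-\I\psi_1)+\tfrac12(-\I\psi_2)$ pointwise for $C^2$ functions. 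All of this is routine and identical in spirit to \cite{S-obst}; none of the arguments uses homogeneity of the kernels, only that $\I 0=0$ and $\I$ is a convex (sup-type) combination of operators in $\mathcal L_*$, which have the required ellipticity $\lambda\le\mu\le\Lambda$ to make all the barriers work with constants depending only on $\|\varphi\|_{C^{1,1}(\R^n)}$.
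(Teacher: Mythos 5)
Your proof is correct and takes essentially the same approach as the paper, which for this lemma simply refers back to the proof of Lemma \ref{semiconvex}: the least-supersolution characterization of $u$, translation invariance, and the fact that $-\I=\inf_a(-L_a-c_a)$ is concave, so the half-sum of supersolutions (and the shifted function plus a constant) is again a supersolution. One small sign slip: the displayed property $\I\bigl(\tfrac{v(\cdot+h)+v(\cdot-h)}{2}\bigr)\ge\tfrac12\I v(\cdot+h)+\tfrac12\I v(\cdot-h)$ is stated the wrong way (a supremum is subadditive, so $\I$ is convex and the inequality is $\le$), but in the argument itself you correctly use the superadditivity of $-\I$, so nothing is affected.
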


\begin{proof}
See the proof of Lemma \ref{semiconvex}.
\end{proof}

Now, notice that after subtracting the obstacle $\varphi$ we get the following equation:
\[ 0= I(u-\varphi+\varphi) = \sup_{a\in \mathcal A} \bigl(L_a (u-\varphi) + c_a + L_a\varphi\bigr)\]
Thus, we consider the following problem which is equivalent to the obstacle problem with convex fully nonlinear elliptic operator:
\begin{equation}\label{obstacleproblem2fully}\begin{cases}
u \ge 0 \quad &\mbox{in }\R^n\\
\sup_{a\in \mathcal A}\bigl(L_a u + c_a(x) \bigr) \le 0 &\mbox{in }\R^n\\
\sup_{a\in \mathcal A}\bigl(L_a u + c_a(x) \bigr) =0  &\mbox{in }\{u > 0\}\\
D^2 u \ge -{\rm Id}  &\mbox{in } \R^n.
\end{cases}\end{equation}
This is obtained by subtracting the obstacle $\varphi$ to the solution $u$ to \eqref{obstacle} and then dividing by $C\|\varphi\|_{C^{2,1}(\R^n)}$.
For convenience, we still denote $u$ the solution to \eqref{obstacleproblem2fully}.

Notice that, dividing by a bigger constant if necessary, we will have
\begin{equation}\label{ffully}
c_a \in C^1(\R^n) \quad \mbox{with}\quad \|c_a\|_{{\rm Lip}(\R^n)}\le 1.
\end{equation}
Moreover, by Proposition \ref{first-regularity-u-fully}, we will have
\begin{equation}\label{C-1-taufully}
u\in C^{1,\tau}(\R^n) \quad \mbox{with}\quad \|u\|_{C^{1,\tau}(\R^n)}\le 1,
\end{equation}
for some $\tau>0$.

\subsection{Regular points and blow-ups}

From now on, we assume that $u$ is a solution of \eqref{obstacleproblem2fully}-\eqref{ffully}-\eqref{C-1-taufully}, and that $x_0$ is a regular free boundary point with exponent $\alpha$ and modulus $\nu$ in the sense of Definition \ref{def-regular-2}.

In case that $1+s+\alpha\geq 2s+\bar\alpha$, we will assume in addition that
\begin{equation}\label{non-singular}
\liminf_{r\to0}\frac{|\{u=0\}\cap B_r(x_0)|}{|B_r(x_0)|}>0.
\end{equation}
For such free boundary points we have the following.

\begin{prop}\label{proprescalingsfully}
Let $u$ a be solution to \eqref{obstacleproblem2fully}-\eqref{ffully}-\eqref{C-1-taufully}.
Assume that $x_0=0$ is a regular free boundary point with exponent $\alpha$ and modulus $\nu$.
In case $1+s+\alpha\geq2s+\bar\alpha$, assume in addition that \eqref{non-singular} holds.

Then, given $\delta>0$, $R_0\ge 1$, $r_0>0$ there is
\[ r=r(\delta, R_0, r_0, \alpha, \nu, s,n,\lambda,\Lambda) \in (0, r_0)\]
such that, for some $d>0$, the rescaled function
\[ v(x) : = \frac{u(r x)}{d} \]
satisfies
\begin{equation}\label{growthvkfully}
\|\nabla v\|_{L^\infty(B_{R})} \le R^{s+\alpha}\quad \mbox{for all }R\ge 1,
\end{equation}
\begin{equation}\label{approxsol fully}
M^+ (\partial_e v) \ge -\delta \quad  \mbox{in }\{v > 0\}\quad\mbox {for all }e\in S^{n-1},
\end{equation}
and
\[M^+\left( v - \ave v(\cdot-h)\,d\mu(h) \right) \geq -\delta {\rm diam}\bigl( {\rm spt} \,\mu\bigr) \quad \mbox{in }\{v>0\}\]
for every measure $\mu\ge0$ with compact support and $\mu(\R^n)=1$.
Moreover,
\begin{equation}
\bigl| v - K(e\cdot x)_+^{1+s} \bigr|  +\bigl |\nabla v -  K(1+s) (e\cdot x)_+^{s}\,e\bigr| \le \delta \quad \mbox{in } B_{R_0}
\end{equation}
for some $K>0$ satisfying $\frac 14 \leq K\leq 1$ and some $e\in S^{n-1}$.
\end{prop}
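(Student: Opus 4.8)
The plan is to follow closely the proof of Proposition \ref{proprescalings}, replacing its linear ingredients by their fully nonlinear counterparts; the one genuinely new point is the classification of the blow-up limit, which now splits into two cases according to the shape of its contact set.

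\emph{Step 1: the blow-up sequence.} As in Lemma \ref{lema-theta}, set $\theta(r):=\sup_{r'\ge r}(r')^{-s-\alpha}\|\nabla u\|_{L^\infty(B_{r'}(x_0))}$; by \eqref{C-1-taufully} this is finite, it is nonincreasing, and since $x_0$ is regular with modulus $\nu$ one has $\theta(r)\ge\nu(r)\to\infty$ as $r\downarrow0$. For $m\in\mathbb N$ I would pick $r'_m\ge 1/m$ with $(r'_m)^{-s-\alpha}\|\nabla u\|_{L^\infty(B_{r'_m}(x_0))}\ge\frac12\theta(1/m)\ge\frac12\theta(r'_m)$; as in Proposition \ref{proprescalings}, $\|\nabla u\|_{L^\infty(\R^n)}\le1$ forces $r'_m\to0$, so $r'_m<r_0$ for $m$ large. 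Put $v_m(x):=\frac{u(x_0+r'_m x)}{(r'_m)^{1+s+\alpha}\theta(r'_m)}$; by construction $\|\nabla v_m\|_{L^\infty(B_R)}\le R^{s+\alpha}$ for all $R\ge 1$ and $\|\nabla v_m\|_{L^\infty(B_1)}\ge\frac12$.

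\emph{Step 2: the approximate equations.} Arguing as in Lemma \ref{semiconvex-fully} and in the proof of Proposition \ref{first-regularity-u-fully}, using $\|c_a\|_{\mathrm{Lip}}\le1$ from \eqref{ffully} and the scaling of \eqref{obstacleproblem2fully}, one checks that $v_m$ satisfies, with $\eta_m:=(r'_m)^{s-\alpha}/\nu(r'_m)\downarrow0$,
\[D^2 v_m\ge-\eta_m\,\mathrm{Id}\ \text{ in }\R^n,\qquad M^+(\partial_e v_m)\ge-\eta_m\ \text{ in }\{v_m>0\}\ \text{ for all }e\in S^{n-1},\]
and, for every $\mu\ge0$ with compact support and $\mu(\R^n)=1$,
\[M^+\Bigl(v_m-\ave v_m(\cdot-h)\,d\mu(h)\Bigr)\ge-\eta_m\,\mathrm{diam}\bigl(\mathrm{spt}\,\mu\bigr)\ \text{ in }\{v_m>0\}.\]
For the last inequality one evaluates the rescaled equation $\sup_a\bigl(L_a v_m+c_a^m\bigr)=0$ at a maximizing index at a point of $\{v_m>0\}$, uses $\sup_a\bigl(L_a v_m+c_a^m\bigr)\le0$ on $\R^n$ to bound the averaged translate, and controls the ensuing increment of the rescaled zero-order term $c_a^m$ against translations lying in a set of diameter $\mathrm{diam}(\mathrm{spt}\,\mu)$ by $\|c_a^m\|_{\mathrm{Lip}}\,\mathrm{diam}(\mathrm{spt}\,\mu)\le\eta_m\,\mathrm{diam}(\mathrm{spt}\,\mu)$; this is the one slightly delicate place, since $M^+$ is not linear.

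\emph{Step 3: compactness and classification.} By Proposition \ref{first-regularity-u-fully} the $v_m$ are bounded in $C^{1,\tau}_{\mathrm{loc}}(\R^n)$, so along a subsequence $v_m\to v_\infty$ in $C^1_{\mathrm{loc}}$, where $v_\infty\ge0$ is convex, $\|\nabla v_\infty\|_{L^\infty(B_R)}\le R^{s+\alpha}$, $\|\nabla v_\infty\|_{L^\infty(B_1)}\ge\frac12$, and, letting $\eta_m\downarrow0$ and using stability of viscosity solutions, $M^+(\partial_e v_\infty)\ge0\ge M^-(\partial_e v_\infty)$ in $\{v_\infty>0\}$, together with $M^+\bigl(v_\infty-\ave v_\infty(\cdot-h)\,d\mu(h)\bigr)\ge0$ in $\{v_\infty>0\}$ for every compactly supported unit-mass $\mu\ge0$. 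I would then set $\Omega_\infty:=\{v_\infty=0\}$, a closed convex set with $0\in\Omega_\infty$, and argue by cases. If $\Omega_\infty$ is \emph{not} contained in any strip $\{|e'\cdot x|\le C\}$, then $v_\infty$ meets the hypotheses of Theorem \ref{thmclassif-fully}, which gives $v_\infty(x)=K(e\cdot x)_+^{1+s}$ and $\Omega_\infty=\{e\cdot x\le0\}$, with $K>0$ by the nondegeneracy bound. If instead $\Omega_\infty\subset\{|e'\cdot x|\le C\}$, I would rule this out in both regimes: if \eqref{non-singular} is assumed, it passes to the limit and yields $|\Omega_\infty\cap B_r|\ge c_0|B_r|$ for all $r>0$ with $c_0>0$ fixed, which is impossible for a convex set contained in a strip (its density in $B_r$ tends to $0$ as $r\to\infty$); if instead $1+s+\alpha<2s+\bar\alpha$, set $\beta:=1+\alpha-s$, so that $\beta\in(0,\bar\alpha)$, $2s+\beta<2$, and $2s+\beta-1=s+\alpha$, whence $v_\infty$ satisfies all the hypotheses of Theorem \ref{thmclassif-fully2} and therefore $v_\infty\equiv0$, contradicting $\|\nabla v_\infty\|_{L^\infty(B_1)}\ge\frac12$. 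In all cases $v_\infty=K(e\cdot x)_+^{1+s}$ with, from $\frac12\le\|\nabla v_\infty\|_{L^\infty(B_1)}\le1$, that is, $\frac12\le(1+s)K\le1$, the constant $K\in[\frac14,1]$.

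\emph{Step 4: conclusion.} Finally, I would phrase Steps 2--3 as a compactness--contradiction statement, exactly as Lemma \ref{lemconvergence} is obtained from Theorem \ref{thmclassif} in the linear case; this produces $\eta>0$, depending only on $\delta$, $R_0$, $\alpha$, $n$, $s$, $\lambda$, $\Lambda$, with the property that any $v_m$ as above with $\eta_m\le\eta$ is $\delta$-close in $C^1(B_{R_0})$ to some $K(e\cdot x)_+^{1+s}$, $\frac14\le K\le1$, and satisfies the two approximate inequalities stated in the Proposition. Choosing $m$ large enough that $\eta_m\le\eta$ and $r'_m<r_0$, and setting $r:=r'_m$ and $d:=(r'_m)^{1+s+\alpha}\theta(r'_m)$, the function $v(x)=u(rx)/d$ has all the asserted properties, and $r$ depends only on the indicated parameters. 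I expect the main obstacle to be Step 3, namely the case in which the contact set of the blow-up limit lies in a strip: this is precisely where the dichotomy between assuming \eqref{non-singular} (when $1+s+\alpha\ge2s+\bar\alpha$) and the exponent restriction $1+s+\alpha<2s+\bar\alpha$ becomes essential, and where the two distinct Liouville theorems, Theorems \ref{thmclassif-fully} and \ref{thmclassif-fully2}, are invoked. A secondary technical point is ensuring, in Step 2, that the error in the measure inequality is controlled by $\mathrm{diam}(\mathrm{spt}\,\mu)$ rather than by the position of $\mathrm{spt}\,\mu$, which needs a little care because $M^+$ is nonlinear.
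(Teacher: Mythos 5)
Your proposal follows essentially the same route as the paper: the paper proves this proposition by repeating the rescaling argument of Proposition \ref{proprescalings} (your Steps 1--2) and invoking Lemma \ref{lemconvergencefully}, whose compactness--contradiction proof is exactly your Steps 3--4, including the same dichotomy (the density hypothesis \eqref{non-singular} passed to the limit excludes a contact set contained in a strip when $1+s+\alpha\ge 2s+\bar\alpha$, Theorem \ref{thmclassif-fully2} with $\beta=1+\alpha-s$ rules out the strip case when $1+s+\alpha<2s+\bar\alpha$, and Theorem \ref{thmclassif-fully} then gives the half-space profile, with $\tfrac12\le(1+s)K\le1$ from the nondegeneracy).

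The one step that does not hold as you justify it is the error bound in the averaged inequality of Step 2. Choosing an (almost) maximizing index $a$ at $x_0\in\{v_m>0\}$ and using $\sup_a\bigl(L_a v_m+c_a^m\bigr)\le 0$ at the translated points gives
\[
L_a\Bigl(v_m-\ave v_m(\cdot-h)\,d\mu(h)\Bigr)(x_0)\;\ge\; -\ave\bigl|c_a^m(x_0-h)-c_a^m(x_0)\bigr|\,d\mu(h)\;\ge\; -\eta_m\ave|h|\,d\mu(h),
\]
so the error is controlled by $\eta_m\sup_{h\in{\rm spt}\,\mu}|h|$, not by $\eta_m\,{\rm diam}\bigl({\rm spt}\,\mu\bigr)$: the increments of $c_a^m$ are always taken against the untranslated point $x_0$, so they are of size $|h|$, and for a point mass $\mu=\delta_{h_0}$ with $|h_0|$ large the diameter is zero while the increment is of order $\eta_m|h_0|$. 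Hence your claim that the increments range over a set of diameter ${\rm diam}({\rm spt}\,\mu)$ does not give the displayed inequality as literally stated. This does not damage the scheme: for each fixed compactly supported $\mu$ the weaker error $\eta_m\sup_{{\rm spt}\,\mu}|h|$ still tends to zero, which is all that is needed to pass to the exact inequality for the blow-up limit and hence to run the compactness argument, and the measures actually used downstream (in Theorem \ref{thmclassif-fully2} and Proposition \ref{liouville}) are symmetric about the origin, for which $\sup_{{\rm spt}\,\mu}|h|\le{\rm diam}({\rm spt}\,\mu)$. So either prove and carry the $\sup_{h\in{\rm spt}\,\mu}|h|$ version (it suffices verbatim), or restrict the stated inequality to symmetric measures or measures whose support contains the origin; the ${\rm diam}$ form for arbitrary unit-mass measures is not what your computation yields.
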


To prove Proposition \ref{proprescalings} we will also need the following intermediate step, which is the analogue of Lemma \ref{lemconvergence}.

\begin{lem}\label{lemconvergencefully}
Given $R_0\ge 1$ and $\delta> 0$, there is
\[\eta=\eta(\delta,R_0, n,s,\lambda,\Lambda)>0\]
such that the following statement holds.

Let $v\in {\rm Lip}(\R^n)$ be a nonnegative function satisfying
\[M^+ (\partial_e v) \ge -\eta \quad  \mbox{in }\{v > 0\}\]
for all $e\in S^{n-1}$,
\[ M^+\left( v - \ave v(\cdot-h)\,d\mu(h) \right) \geq -\eta {\rm diam}\bigl( {\rm spt} \,\mu\bigr) \quad \mbox{in }\{v>0\}\]
for every measure $\mu\ge0$ with compact support and $\mu(\R^n)=1$,
\[D^2 v \ge -\eta {\rm Id} \qquad\textrm{in}\quad \R^n,\]
\[\bigl\|\nabla v\bigr\|_{L^\infty(B_{R})}  \le R^{s+\alpha} \quad \mbox{for all }R\ge 1,\]
and
\[\bigl\|\nabla v\bigr\|_{L^\infty(B_{1})} \ge \frac{1}{2}.\]
In case $1+s+\alpha\geq 2s+\bar\alpha$, assume in addition
\[\frac{|\{v=0\}\cap B_R|}{|B_R|}\geq c_0>0\quad\mbox{for all}\ R\leq \frac{1}{\eta}.\]

Then,
\[ \bigl| v - K(e\cdot x)_+^{1+s} \bigr|  +\bigl |\nabla v -  K(1+s) (e\cdot x)_+^{s}\,e\bigr| \le \delta \quad \mbox{in } B_{R_0}  \]
where
\[\frac 12 \le (1+s)K\le 1\]
and $e\in S^{n-1}$.
\end{lem}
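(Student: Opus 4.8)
The plan is to argue by compactness and contradiction, along exactly the same lines as the proof of Lemma~\ref{lemconvergence}, but invoking the two fully nonlinear classification results — Theorem~\ref{thmclassif-fully} and Theorem~\ref{thmclassif-fully2} — in place of the linear Theorem~\ref{thmclassif}. Suppose the statement fails: there are $\eta_k\downarrow0$ and $v_k\in{\rm Lip}(\R^n)$ satisfying all the hypotheses with $\eta=\eta_k$ (including, in the regime $1+s+\alpha\ge2s+\bar\alpha$, the density bound $|\{v_k=0\}\cap B_R|/|B_R|\ge c_0$ for $R\le1/\eta_k$), but such that for every $e\in S^{n-1}$ and every $K$ with $\frac12\le(1+s)K\le1$ one has $\|v_k-K(e\cdot x)_+^{1+s}\|_{C^1(B_{R_0})}\ge\delta$. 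First I would obtain uniform interior estimates: since $M^+_{\mathcal L_0}\ge M^+_{\mathcal L_*}$ and, taking $\mu$ to be the Dirac mass at $h$, ${\rm diam}({\rm spt}\,\mu)=0$, the second hypothesis gives $M^+_{\mathcal L_0}(v_k-v_k(\cdot-h))\ge0\ge-\eta_k|h|$ in $\{v_k>0\}$; combined with $v_k\ge0$, $D^2v_k\ge-\eta_k\,{\rm Id}$ and the growth control, Proposition~\ref{first-regularity-u-fully} (rescaled, as in Lemma~\ref{lemconvergence}) yields $[\nabla v_k]_{C^\tau(B_R)}\le CR^{s+\alpha-\tau}$ for all $R\ge1$. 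By Arzel\`a--Ascoli and a diagonal argument, $v_k\to v_\infty$ in $C^1_{\rm loc}(\R^n)$ along a subsequence.

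Next I would pass the remaining conditions to the limit. The limit $v_\infty$ is convex, nonnegative and of class $C^{1,\tau}_{\rm loc}$; set $\Omega_\infty=\{v_\infty=0\}$, a closed convex set. For a fixed point $x_0\in\R^n\setminus\Omega_\infty$ we have $v_k>0$ in a fixed ball around $x_0$ for $k$ large, so the viscosity inequalities for $v_k$ hold there; since the extremal operator is fixed and $\partial_ev_k\to\partial_ev_\infty$ in $C^0_{\rm loc}$, stability of viscosity solutions under uniform convergence gives $M^+(\partial_ev_\infty)\ge0$ in $\R^n\setminus\Omega_\infty$ for every $e$, and applying this with $-e$ together with $M^+(-w)=-M^-(w)$ also gives $M^-(\partial_ev_\infty)\le0$. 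Fixing a measure $\mu$ first and passing $k\to\infty$, the inequality $M^+\bigl(v_\infty-\ave v_\infty(\cdot-h)\,d\mu(h)\bigr)\ge-\eta_k\,{\rm diam}({\rm spt}\,\mu)\to0$ yields $M^+\bigl(v_\infty-\ave v_\infty(\cdot-h)\,d\mu(h)\bigr)\ge0$ in $\R^n\setminus\Omega_\infty$ for every such $\mu$. Passing the last two hypotheses to the limit gives $\|\nabla v_\infty\|_{L^\infty(B_R)}\le R^{s+\alpha}$ for $R\ge1$ and $\|\nabla v_\infty\|_{L^\infty(B_1)}\ge\frac12$; in particular $v_\infty\not\equiv0$.

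The decisive point is to rule out that $\Omega_\infty$ is contained in a strip $\{|e'\cdot x|\le C\}$. If $1+s+\alpha<2s+\bar\alpha$ and $\Omega_\infty$ were in such a strip, then with $\beta:=1+\alpha-s$ we have $\beta\in(0,\bar\alpha)$, $s+\alpha=2s+\beta-1$ and $2s+\beta<2$, so $v_\infty$ satisfies all the hypotheses of Theorem~\ref{thmclassif-fully2} and hence $v_\infty\equiv0$, contradicting the nondegeneracy. If instead $1+s+\alpha\ge2s+\bar\alpha$, then the transmitted density bound does the job: for every $\epsilon>0$ and every $R\ge1$ one has $\{v_k=0\}\cap B_R\subset\{v_\infty<\epsilon\}\cap B_R$ for $k$ large (so that also $1/\eta_k\ge R$), whence $|\{v_\infty<\epsilon\}\cap B_R|\ge c_0|B_R|$, and letting $\epsilon\downarrow0$ we get $|\Omega_\infty\cap B_R|\ge c_0|B_R|$ for all $R$, which is impossible if $\Omega_\infty$ lies in a strip. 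Thus $\Omega_\infty$ is \emph{not} contained in any strip, so Theorem~\ref{thmclassif-fully} applies and gives $\Omega_\infty=\{e\cdot x\le0\}$ and $v_\infty(x)=K_0(e\cdot x)_+^{1+s}$ for some $e\in S^{n-1}$ and $K_0>0$. Since $\|\nabla v_\infty\|_{L^\infty(B_1)}=(1+s)K_0$ and $\frac12\le\|\nabla v_\infty\|_{L^\infty(B_1)}\le1$, we get $\frac12\le(1+s)K_0\le1$. As $v_k\to v_\infty$ in $C^1(B_{R_0})$, this contradicts the choice of the $v_k$ for $k$ large, proving the lemma.

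I expect the main obstacle to be the careful passage to the limit of the two nonlocal fully nonlinear conditions restricted to the moving positivity sets $\{v_k>0\}$ — in particular verifying that the limiting viscosity inequalities hold with zero right-hand side — together with the argument that $\Omega_\infty$ cannot be contained in a strip, which forces one to split into the two regimes of $\alpha$, invoking the "non-singular" classification Theorem~\ref{thmclassif-fully2} in one case and the transmitted measure-density lower bound in the other, so that the "regular" classification Theorem~\ref{thmclassif-fully} becomes applicable.
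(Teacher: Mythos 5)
Your proposal is correct and follows essentially the same compactness-and-contradiction scheme as the paper: uniform $C^{1,\tau}$ bounds from Proposition \ref{first-regularity-u-fully}, $C^1_{\rm loc}$ convergence to a limit $v_\infty$ solving the limiting inequalities, transmission of the zero-set density bound when $1+s+\alpha\ge 2s+\bar\alpha$, and classification via Theorems \ref{thmclassif-fully} and \ref{thmclassif-fully2}. The only difference is that you spell out explicitly (with the choice $\beta=1+\alpha-s$ and the strip/no-strip dichotomy) what the paper compresses into ``we reach the same conclusion by using both Theorems,'' which is a faithful elaboration rather than a different route.
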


\begin{proof}
The proof is by a compactness contradiction argument and is very similar to that of Lemma \ref{lemconvergence}.

Assume that for some $R_0\ge 1$ and $\delta>0$ we have sequences $\eta_k\downarrow 0$ and $v_k\in {\rm Lip}(\R^n)$ satisfying
\[M^+ (\partial_e v_k) \ge -\eta_k \quad  \mbox{in }\{v > 0\}\mbox {for all }e\in S^{n-1} \quad \mbox{ for all }e\in S^{n-1},\]
\[  M^+\left( v_k - \ave v_k(\cdot-h)\,d\mu(h) \right) \ge -\eta_k  {\rm diam}\bigl( {\rm spt} \,\mu\bigr), \]
\[D^2 v_k \ge -\eta_k {\rm Id} \qquad\textrm{in}\quad \R^n,\]
\[\bigl\|\nabla v_k\bigr\|_{L^\infty(B_{R})}  \le R^{s+\alpha} \quad \mbox{for all }R\ge 1,\]
\begin{equation}\label{nondegvk8}
\bigl\|\nabla v_k\bigr\|_{L^\infty(B_{1})} \ge \frac{1}{2},
\end{equation}
and
\[\frac{|\{v_k=0\}\cap B_R|}{|B_R|}\geq c_0>0\quad\mbox{for all}\ R\leq \frac{1}{\eta_k}.\]
but we have
\begin{equation}\label{contr8}
 \bigl\| v_k - K(e\cdot x)_+^{1+s} \bigr\|_{C^1(B_{R_0})} \ge \delta \quad \mbox{for all } \frac 1 2\le(1+s)K\le 1\quad\textrm{and}\quad e\in S^{n-1}.
\end{equation}

By Proposition \ref{first-regularity-u-fully} we obtain that $v_k$ is $C^{1,\tau}$ in all of $\R^n$ with the estimate
\[ \bigl[\nabla v_k\bigr]_{C^\tau(B_{R})}  \le CR^{s+\alpha-\tau} \quad \mbox{for all }R\ge 1.\]
Thus, up taking subsequences, the functions $v_k$ converge in $C^1_{\rm loc}(\R^n)$ to a function $v_\infty$, that is a viscosity
\[M^+(\partial_e v_\infty) \ge 0 \quad \mbox{in }\{v_\infty>0\} \quad \mbox{ for all }e\in S^{n-1},\]
\[ M^+\left( v_\infty- \ave v_\infty(\cdot-h)\,d\mu(h) \right) \ge 0,\]
\[D^2 v_\infty  \ge 0 \qquad\textrm{in}\quad \R^n\]
with the growth control
\[\bigl\|\nabla v_\infty\bigr\|_{L^\infty(B_{R})}  \le R^{s+\alpha} \quad \mbox{for all }R\ge 1.\]
Moreover, if $2s+\bar\alpha\leq 1+s+\alpha$ then we also have
\[\frac{|\{v_\infty=0\}\cap B_R(x_0)|}{|B_R(x_0)|}\geq c_0>0\quad\mbox{for all}\ R.\]
In particular, the convex set $\{v_\infty=0\}$ is not contained in any strip $\{|e'\cdot x|\leq C\}$.

Thus, in case $2s+\bar\alpha\leq 1+s+\alpha$ by Theorem \ref{thmclassif-fully} we find
\[ v_\infty \equiv K_0 (x\cdot e)_+^{1+s},\quad \mbox{for some } e\in S^{n-1} \mbox{ and } K_0\ge 0.\]
In case $2s+\bar\alpha>1+s+\alpha$ then we reach the same conclusion by using both Theorems~\ref{thmclassif-fully} and \ref{thmclassif-fully2}.

In any case, passing \eqref{nondegvk8} to the limit and using the growth control we have
\[ \frac 12 \le \bigl\|\nabla v_k\bigr\|_{L^\infty(B_{1})} \le 1\]
and thus  $\frac{1}{2} \le (1+s)K_0 \le 1$.

Therefore, we have shown that $v_k\rightarrow K_0 (x\cdot e)_+^{1+s}$ in the $C^1$ norm, uniformly on compact sets.
In particular, \eqref{contr8} is contradicted for large $k$, and thus the lemma is proved.
\end{proof}

\begin{proof}[Proof of Proposition \ref{proprescalingsfully}]
It is almost identical to the proof of Proposition \ref{proprescalings} but we use Lemma \ref{lemconvergencefully} instead of Lemma \ref{lemconvergence}.
\end{proof}

\subsection{$C^1$ regularity of the free boundary}

We show first that the free boundary is Lipschitz in a neighborhood of any regular point $x_0$ satisfying \eqref{non-singular}.

\begin{prop}\label{free-bdry-Lipschitzfully}
Let $u$  be a solution of the obstacle problem \eqref{obstacleproblem2fully}-\eqref{ffully}-\eqref{C-1-taufully} and assume that $x_0=0$ is a regular free boundary point with exponent $\alpha$ and modulus $\nu$.
In case $1+s+\alpha\geq 2s+\bar\alpha$, assume in addition that \eqref{non-singular} holds.

Then, there exists a vector $e\in S^{n-1}$ such that for any $\ell >0$ there is $r>0$ such that
\[ \{u>0\}\cap B_r  = \bigl\{\bar x_n > g(\bar x_1, \bar x_2,\dots, \bar x_{n-1}) \bigr\}\cap B_r \]
where $\bar x  = R x$, $R$ rotation with $Re = e_n$, and where $g$ is Lipschitz with
\[\|g\|_{C^{0,1}(B_r)}\le \ell.\]
Moreover,
\[\partial_{e'}u\geq0\quad\textrm{in}\ B_r,\quad\textrm{for all}\ e'\cdot e\geq\frac{\ell}{\sqrt {1+\ell^2}},\]
and
\[\partial_e u\geq cr^s\quad\textrm{in}\ B_{r}(2re).\]
The constants $c$ and $r$ depend only on $n$, $s$, $\lambda$, $\Lambda$, $\nu$, and $\ell$.
\end{prop}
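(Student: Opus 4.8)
The plan is to repeat, essentially line by line, the argument of Proposition~\ref{free-bdry-Lipschitz}, replacing its linear ingredients by their fully nonlinear analogues. The whole mechanism is packaged in Proposition~\ref{proprescalingsfully}, which already contains the classification of blow-ups (through Theorems~\ref{thmclassif-fully} and~\ref{thmclassif-fully2}) and which incorporates the extra hypothesis \eqref{non-singular} in the borderline range $1+s+\alpha\ge 2s+\bar\alpha$. So I would begin by fixing $\ell\in(0,1]$ and by choosing $\delta>0$ small, $R_0\ge 2$ large, and $r_0>0$ --- all to be pinned down below in terms of $\ell$ and universal constants --- and applying Proposition~\ref{proprescalingsfully}. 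This yields $r\in(0,r_0)$, $d>0$, a direction $e\in S^{n-1}$, and $K\in[\tfrac14,1]$ such that $v(x):=u(rx)/d$ is $\delta$-close in $C^1(B_{R_0})$ to $K(e\cdot x)_+^{1+s}$, obeys $\|\nabla v\|_{L^\infty(B_R)}\le R^{s+\alpha}$ for $R\ge 1$, satisfies $M^+(\partial_{e}v)\ge-\delta$ in $\{v>0\}$ for all $e\in S^{n-1}$, and satisfies the measure-valued inequality $M^+\big(v-\ave v(\cdot-h)\,d\mu(h)\big)\ge-\delta\,{\rm diam}({\rm spt}\,\mu)$ in $\{v>0\}$ for all probability measures $\mu$ with compact support.

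Next I would collect the properties of the directional derivatives that feed the barrier argument. Fix $e'\in S^{n-1}$ with $e'\cdot e\ge \ell/\sqrt{1+\ell^2}$. From the $C^1$-closeness of $v$ to $K(e\cdot x)_+^{1+s}$ and $K\ge\tfrac14$ we get $\partial_{e'}v\ge c\,\ell\,(e\cdot x)_+^s-C\delta$ in $B_{R_0}$; in particular $\partial_{e'}v\ge-C\delta$ there and $\int_{B_1}(\partial_{e'}v)_+\ge c\ell$ once $\delta$ is small. Since $v\in C^{1,\tau}(\R^n)$, $v\ge0$, and $v=0$ on $\{v=0\}$, we have $\nabla v\equiv 0$ on $\{v=0\}$, hence $\partial_{e'}v=0$ there; moreover $|\partial_{e'}v|\le R^{s+\alpha}$ on $B_R$. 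The one new point --- which is exactly where the convexity of $\I$ enters --- is that
\[ M^-(\partial_{e'}v)\le\delta\quad\text{in }\{v>0\}\qquad\text{for every }e'\in S^{n-1}.\]
Indeed, applying the measure-valued inequality of Proposition~\ref{proprescalingsfully} to the Dirac measure at $-te'$ gives $M^+\big(v-v(\cdot+te')\big)\ge-\delta t$ in $\{v>0\}$, i.e.\ $M^-\big(v(\cdot+te')-v\big)\le\delta t$; dividing by $t$ and letting $t\downarrow0$ gives the claim. (Equivalently: for a convex operator the incremental quotients of the solution are Pucci supersolutions, and this survives the rescaling.)

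Then I would invoke the fully nonlinear version of Lemma~\ref{lemdelta}: there is $\varepsilon=\varepsilon(n,s,\lambda,\Lambda)>0$ such that, if $E\subset B_1$ is relatively closed, $w\in C(B_1)$ satisfies $M^-w\le\varepsilon$ in $B_1\setminus E$, $w=0$ in $E\cup(\R^n\setminus B_2)$, $w\ge-\varepsilon$ in $B_2\setminus E$, and $\int_{B_1}w_+\ge1$, then $w\ge0$ in $\overline{B_{1/2}}$. Its proof is the verbatim adaptation of the one of Lemma~\ref{lemdelta}: if the sliding bump $\psi_t$ touched $w$ from below at some $z\in B_1\setminus E$, then $w-\psi_t\ge0$ with equality at $z$, so its symmetrized second increment at $z$ is nonnegative, whence $M^-(w-\psi_t)(z)\ge\lambda\int_{\R^n}\big(\tfrac{(w-\psi_t)(z+y)+(w-\psi_t)(z-y)}{2}\big)|y|^{-n-2s}\,dy\ge c>0$ (the $\Lambda$-part of $M^-$ vanishes), while on the other hand $M^-(w-\psi_t)(z)\le M^-w(z)+M^+(-\psi_t)(z)\le\varepsilon+C\varepsilon$, a contradiction for $\varepsilon$ small. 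Applying this to $w=\tfrac{C_1}{\ell}(\partial_{e'}v)\chi_{B_2}$ with $E=\{v=0\}\cap B_2$, choosing $C_1$ large and $\delta$ small (and $R_0$ large so that the nonlocal tails of $\partial_{e'}v$ are absorbed into $\delta$ by the growth control \eqref{growthvkfully}, exactly as in the proof of Proposition~\ref{free-bdry-Lipschitz}), we conclude $w\ge0$ in $B_{1/2}$, that is $\partial_{e'}u\ge0$ in $B_{r/2}$.

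Since this holds for every $e'$ in the cone $\{e'\cdot e\ge\ell/\sqrt{1+\ell^2}\}$, the set $\{u>0\}\cap B_{r/2}$ is, as in the classical obstacle problem (cf.\ Proposition~\ref{free-bdry-Lipschitz}), the epigraph in the direction $e$ of a Lipschitz function $g$ with $\|g\|_{C^{0,1}}\le\ell$. Taking $e'=e$ above and using $K\ge\tfrac14$ together with the relation between $d$ and $r$ coming from the construction, one also obtains the nondegeneracy $\partial_e u\ge c\,r^s$ in $B_r(2re)$; after relabelling $r/2$ as $r$ this is the statement. As for the main difficulty: there is none of a conceptual nature --- everything reduces, through Proposition~\ref{proprescalingsfully}, to the scheme of Section~\ref{sec6}. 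The one step to be carried out with care is identifying the correct one-sided Pucci inequality for $\partial_{e'}v$: one must use the measure-valued estimate of Proposition~\ref{proprescalingsfully} (which encodes the convexity of $\I$), not merely $M^+(\partial_e v)\ge-\delta$, in order to get $M^-(\partial_{e'}v)\le\delta$ --- the inequality in the direction needed to run the barrier lemma and conclude $\partial_{e'}u\ge0$.
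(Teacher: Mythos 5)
Your proposal is correct and is essentially the paper's own argument: the paper proves Proposition \ref{free-bdry-Lipschitzfully} precisely by declaring it a minor modification of Proposition \ref{free-bdry-Lipschitz}, and what you write is that modification carried out — Proposition \ref{proprescalingsfully} in place of Proposition \ref{proprescalings}, the extremal-operator version of Lemma \ref{lemdelta}, and the same cone-of-monotonicity conclusion. One small remark: your claim that one \emph{must} use the measure-valued inequality is not quite right, since $M^+(\partial_e v)\ge-\delta$ is assumed for \emph{all} $e\in S^{n-1}$, so applying it with $e=-e'$ and using $M^+(-w)=-M^-(w)$ gives $M^-(\partial_{e'}v)\le\delta$ directly; your Dirac-mass derivation is also valid, so this does not affect correctness.
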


\begin{proof}
The proof is a minor modification of that of Proposition \ref{free-bdry-Lipschitz}.
\end{proof}

Using the previous result we next find the following.

\begin{prop}\label{contagifully}
Let $u$ be a solution of the obstacle problem \eqref{obstacleproblem2fully}-\eqref{ffully}-\eqref{C-1-taufully} and assume that $0$ is a regular free boundary point with exponent $\alpha$ and modulus $\nu$.
In case $1+s+\alpha\geq 2s+\bar\alpha$, assume in addition that \eqref{non-singular} holds.

Then, there is $r>0$ such that every point $x_0$ on $\partial \{u>0\}\cap B_r$ is regular and satisfies \eqref{non-singular}, with a common modulus of continuity $\tilde\nu$.
In particular, the set $\partial \{u>0\}\cap B_r$ is $C^1$, with a uniform modulus of continuity.
\end{prop}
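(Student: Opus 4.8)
The plan is to follow the proof of Proposition \ref{contagi}, modified in two places: the non-degeneracy condition \eqref{non-singular} must be propagated to the nearby free boundary points, and the comparison argument with a single linear operator must be replaced by one for the extremal operator $M^-$.

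First I would apply Proposition \ref{free-bdry-Lipschitzfully} at the point $0$ — legitimate, since \eqref{non-singular} is assumed at $0$ whenever $1+s+\alpha\ge 2s+\bar\alpha$. This gives a direction $e\in S^{n-1}$ and, for any prescribed small $\eta>0$, a radius $r>0$ such that $\{u>0\}\cap B_{r}$ is a Lipschitz epigraph in the direction $e$ with Lipschitz constant at most $\eta$; in particular, with $\mathcal C_\eta$ the cone of Lemma \ref{homog-subsol},
\[(x_0+\mathcal C_\eta)\cap B_{2r}(x_0)\subset\{u>0\}\cap B_{4r}\qquad\text{for all }x_0\in\partial\{u>0\}\cap B_{r}.\]
From this graph structure, at each such $x_0$ the complement of $x_0+\mathcal C_\eta$ is contained in $\{u=0\}$ near $x_0$; since $\mathcal C_\eta$ is a cone arbitrarily close to a half-space, it follows that $\{u=0\}$ has density at least $\tfrac12-\omega(\eta)$ at $x_0$. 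Hence every free boundary point in $B_{r}$ satisfies \eqref{non-singular} with a uniform constant, so that Propositions \ref{free-bdry-Lipschitzfully} and \ref{proprescalingsfully} are applicable at each of them.

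Next comes the barrier argument. Rescaling $\tilde u(x)=u(rx)$ and differentiating the convex equation $\sup_{a}(L_a u+c_a)=0$ in $\{u>0\}$, one obtains that $\partial_e\tilde u$ satisfies $M^+(\partial_e\tilde u)\ge-\varepsilon_r$ and $M^-(\partial_e\tilde u)\le\varepsilon_r$ in $\{\tilde u>0\}$ with $\varepsilon_r=O(r^{2s})$, while Proposition \ref{free-bdry-Lipschitzfully} gives $\partial_e\tilde u\ge0$ in $B_1$ and $\partial_e\tilde u\ge c\,r^{s}$ in $\overline{B_{1}(2e)}$. From the function $\Phi$ of Lemma \ref{homog-subsol} — homogeneous of degree $s+\epsilon$, supported in $\mathcal C_\eta$, and satisfying $L\Phi\ge0$ for every $L\in\mathcal L_*$, hence $M^-\Phi\ge0$ — I would build the barrier $\psi=\Phi\,\chi_{B_4}+C_3\,\chi_{B_{1/4}(2e)}$ and check, using superadditivity of $M^-$ and the strict positivity of $M^-(\chi_{B_{1/4}(2e)})$ outside $B_{1/4}(2e)$, that $M^-\psi\ge 1$ in $B_1\setminus B_{1/4}(2e)$ for $C_3$ large. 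Placing the translated barrier $\lambda\,\psi(\cdot-\tilde x_0)$, with $\lambda\sim r^{s}$, at an arbitrary free boundary point $\tilde x_0\in\{\tilde u>0\}\cap B_{1/4}$, the comparison principle for $M^-$ applies — since $\lambda\psi\le\partial_e\tilde u$ on the relevant boundary and $\lambda M^-\psi\ge c\,r^{s}\ge\varepsilon_r$ for $r$ small because $s<2s$ — and yields $\partial_e\tilde u\ge\lambda\,\psi(\cdot-\tilde x_0)$. Evaluating along the ray $\tilde x_0+te$ and using the homogeneity of $\Phi$, this gives $\partial_e\tilde u(\tilde x_0+te)\ge c\,t^{s+\epsilon}$ for $t\in(0,1)$, and integrating in $t$ gives $\sup_{B_t(\tilde x_0)}\tilde u\ge c\,t^{1+s+\epsilon}$. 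Choosing $\epsilon<\alpha$ in Lemma \ref{homog-subsol}, we conclude that every free boundary point in $B_{cr}$ is regular, with the common modulus $\tilde\nu(t)=c\,t^{\epsilon-\alpha}$.

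Finally, with all points of $\partial\{u>0\}\cap B_r$ regular with the common modulus $\tilde\nu$ and satisfying \eqref{non-singular} uniformly, I would invoke Proposition \ref{free-bdry-Lipschitzfully} once more at each such point, letting the Lipschitz constant $\ell\downarrow0$, exactly as in the proof of Proposition \ref{prop6.5}; this yields that $\partial\{u>0\}\cap B_r$ is $C^1$ with a modulus of continuity depending only on $\tilde\nu$, $n$, $s$, and the ellipticity constants. The main obstacle is the barrier step: one must verify that the subsolution of Lemma \ref{homog-subsol}, constructed for the linear class $\mathcal L_*$, remains a \emph{strict} subsolution for the extremal operator $M^-$ after adding the bump $C_3\chi_{B_{1/4}(2e)}$, and that the comparison principle for $M^-$ can be applied in the viscosity sense across the non-smooth set $\partial\mathcal C_\eta$. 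By contrast, the propagation of \eqref{non-singular}, though new compared with the linear Proposition \ref{contagi}, is a soft density estimate once the Lipschitz graph structure of Proposition \ref{free-bdry-Lipschitzfully} is available.
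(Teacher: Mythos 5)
Your proposal is correct and follows essentially the same route as the paper, whose proof of this proposition is precisely the barrier argument of Proposition \ref{contagi} run with Proposition \ref{free-bdry-Lipschitzfully} and the subsolution $\Phi$ of Lemma \ref{homog-subsol}; your two additions (propagating \eqref{non-singular} via the Lipschitz graph, and replacing the single operator $L$ by $M^-$ in the comparison step, using $M^-\Phi\ge0$ and superadditivity) are exactly the modifications the paper leaves implicit. The two points you flag as potential obstacles are indeed harmless: since $\partial_e\tilde u\ge0$ near the point and both $\partial_e\tilde u$ and $\psi(\cdot-\tilde x_0)$ vanish on $\{\tilde u=0\}$, any negative contact point lies inside the open cone $\tilde x_0+\mathcal C_\eta$ where $\Phi$ is smooth, so the viscosity evaluation of $M^-$ at the contact point goes through as in the linear case.
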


\begin{proof}
The result follows Proposition \ref{free-bdry-Lipschitzfully} and using the homogeneous solution $\Phi$ of Lemma \ref{homog-subsol}; see the proof of Propositions \ref{contagi}.
\end{proof}

\subsection{Proof of Theorem \ref{th3}}

We now prove the $C^{1,\gamma}$ regularity of free boundaries.
For this, we will use Theorems 1.5 and 1.6 in \cite{RS-C1}.

\begin{proof}[Proof of Theorem \ref{th3}]
After subtracting the obstacle $\varphi$ and dividing by a constant, we may assume that $u$ satisfies \eqref{obstacleproblem2fully}-\eqref{ffully}-\eqref{C-1-taufully}.

Let $x_0$ be any free boundary point.
If (iii) holds then there is nothing to prove.
From now on we assume (iii) does not hold, and thus $x_0$ is a regular free boundary point.
We also assume $x_0=0$.

\vspace{3mm}

\emph{Case 1}. Assume
\[\liminf_{r\to0}\frac{|\{u=0\}\cap B_r|}{|B_r|}=0.\]
We need to show that
\begin{equation}\label{want-fully}
u(x)=o(|x|^{\min(2s+\gamma,1+s+\alpha)}).
\end{equation}
Assume that \eqref{want-fully} does not hold, and let $\alpha'$ be such that $1+s+\alpha'=\min(2s+\gamma,1+s+\alpha)$.
Notice that necessarily we have $\alpha'\geq0$, since otherwise there are no such free boundary points (by Theorems~\ref{thmclassif-fully} and \ref{thmclassif-fully2}).
Thus, $x_0=0$ is a regular point with exponent $\alpha'\geq0$, and $1+s+\alpha'<2s+\bar\alpha$.
Therefore, since $1+s+\alpha'<2s+\bar\alpha$ we do not need assumption \eqref{non-singular} in Proposition \ref{contagifully}, and hence we find that the free boundary will be $C^1$ near $0$.
But then
\[\lim_{r\to0}\frac{|\{u=0\}\cap B_r|}{|B_r|}=\frac12,\]
a contradiction.
Hence, \eqref{want-fully} is proved.

\vspace{3mm}

\emph{Case 2}. Assume now
\begin{equation}\label{non-singular-proof}
\liminf_{r\to0}\frac{|\{u=0\}\cap B_r|}{|B_r|}>0.
\end{equation}
Then, by Proposition \ref{contagifully}, the set of regular points satisfying \eqref{non-singular-proof} is relatively open and the free boundary is $C^1$ near those points.

Furthermore, rescaling exactly as in the proof of Theorem \ref{th2} and using Theorem 1.6 in \cite{RS-C1}, we find that the free boundary is $C^{1,\gamma}$ for all $\gamma\in(0,\bar\alpha)$ in a neighborhood of $0$.
Finally, thanks to Theorem 1.5 in \cite{RS-C1} we have
\[\partial_i u/d^s\in C^\gamma(\{u>0\}\cap B_r)\]
for some $r>0$, and this yields
\[u(x)=c\,d^{1+s}+o(|x|^{1+s+\gamma}),\]
as desired.
\end{proof}

\section{Appendix: Proof of Proposition \ref{liouville}}

We need to introduce the following definition.
Given
$\Omega\subset \R^n$ open, let
\[ [u]_{W^{\sigma, \infty}(\Omega)} =  {\rm ess\,sup }_{x\in\Omega} \int_{B_{d_x}} \bigl| \partial^2 u(x,y) \bigr|  (2-\sigma)|y|^{-n-\sigma} \,dy \]
where $d_x={\rm dist}(x,\Omega)$ and
\[ \partial^2 u(x,y) = u(x+y) + u(x-y) -2u(x) \]
Note that if $v\in W^{\sigma, \infty}(\Omega)$ and $|v(x)|\le 1+|x|^{\sigma-\epsilon}$ then
\[ M^{+}_{\mathcal L_0}v(x)  : = \Lambda \int_{\R^n} \bigl( \partial^2 v(x,y)\bigr)^+\,dy -\lambda \int_{\R^n} \bigl( \partial^2 v(x,y)\bigr)^- \,dy\]
and
\[ M^{-}_{\mathcal L_0}v(x)  : = \lambda \int_{\R^n} \bigl( \partial^2 v(x,y)\bigr)^+\,dy -\Lambda \int_{\R^n} \bigl( \partial^2 v(x,y)\bigr)^- \,dy\]
are defined for almost every $x\in \Omega$.

\begin{proof}[Proof of Proposition \ref{liouville}]
The result for all $C_1>0$ trivially follows from the result for $C_1=1$. Thus, in all the proof we assume that $C_1=1$.
Throughout the proof we will assume without loss of generality that $u$ is $C^2$ at $0$. Indeed, since $u$ is convex by a classical theorem of Alexandrov-Bussemann-Feller $u$ is second order differentiable at a.e. point. Thus, if the origin is not a good point we set the origin at a new point in $B_1$ (changing  $C_1$ by $2^{\sigma+\gamma}C_1$).
More precisely, after subtracting a plane we may assume that
\[  0 =u(0) \le u(x) \le M|x|^2 \quad \mbox{in }B_1\]
for some $M$ large enough. We will not need use any quantitative control on $M$ but we only need in the proof that $M<\infty$ so that certain viscosity solutions are satisfy the equation in the integral sense at almost every point.

{\em Step 1.}
For fixed $h \in \R^n$,  taking $\mu$ a mass concentrated at $\pm h$ we obtain that, in the viscosity sense,
\[ M^{+}_{\mathcal L_0} (u - u(\cdot-h)) \ge  0 \quad \mbox{in }\R^n\]
and
\[ M^{+}_{\mathcal L_0} (u - u(\cdot+h)) \ge  0 \quad \mbox{in }\R^n.\]
Thus
\[ M^{-}_{\mathcal L_0} (u - u(\cdot-h)) = - M^{+}_{\mathcal L_0} (u(\cdot-h)-u) \le  0 \quad \mbox{in }\R^n.\]
Therefore, for $v = u - u(\cdot-h)$ we have
\[ M^- _{\mathcal L_0} v \le 0 \le M^+_{\mathcal L_0} v\]
in the viscosity sense.

{\em Step 2. }
We first show that $u\in W^{\sigma, \infty}(B_R)$ for all $R\ge 1$ and prove bounds for the corresponding seminorms.

Given $\rho\ge 1$ we consider the rescaled function
\[\bar u(x) = \rho^{-\sigma-\gamma} u(\rho x)\]
It is immediate to verify that $\bar u$ satisfies the same assumptions (i), (ii), and (iii) as $u$. In particular the constant $C_1$ in (i) for $\bar u$ is the same as that of $u$, that is $C_1=1$.

Since $u \le 1$ in $B_1$, the parabola $|2x|^2 +c$ touches $\bar u$ by above in $B_1$ (for some $c>0$) at the point $x_0\in\overline{B_{1/2}}$.
Given $v = \bar u - \bar u(\cdot-h)$, since $u$ is convex and can be touched by a parabola by above at $x_0$, the function $v$ can be touched by a parabola by below at $x_0$.
Then, by Lemma 3.3 in \cite{CS} the Pucci operator can be evaluated at
this point and we have
\[
M^+v (x_0) = \int_{\R^n}   \delta^2 v(x_0,y)   \frac{b(y)}{|y|^{n+\sigma}}\, dy \ge 0
\]
where
\[b(y):=
\begin{cases}
 \Lambda  \quad  &\mbox{if } \delta^2 v(x_0,y) >0 \\
 \lambda 		&\mbox{if}  \delta^2 v(x_0,y) <0.
\end{cases}
\]

Now we rewrite this as
\[
\int_{B_1}   \delta^2 \bar u(x_0,y)   \frac{b(y)}{|y|^{n+\sigma}}\, dy  + \int_{\R^n\setminus B_1}  \delta^2 v(x_0,y)  \frac{b(y)}{|y|^{n+\sigma}}\, dy\ge \int_{B_1}   \delta^2 \bar u(x_0+h,y)   \frac{b(y)}{|y|^{n+\sigma}}\, dy.
\]
Using the convexity of $\bar u$, the fact that $|2x|^2 +c$ touches $\bar u$ by above at $x_0$ in $B_{1/2}(x_0)\subset B_1$, and recalling that
\[
\begin{split}
|\delta^2 v(x_0,y)| &\le |u(x_0+y)-u(x_0+h+y) | + |u(x_0-y)-u(x_0+h-y) | \,+
\\
& \hspace{75 mm}+ 2 |u(x_0)-u(x_0+h)| \\
&\le 4(1+|y|^{\sigma+\gamma-1})
\end{split}
\]
 for $|h|\le 1$ by (i) we obtain
\[
\int_{B_{1/2}}   8|y|^2   \frac{\Lambda}{|y|^{-n-\sigma}}\, dy  + \int_{\R^n\setminus B_1}  4(1+|y|^{\sigma+\gamma-1})  \frac{b(y)}{|y|^{-n-\sigma}}\, dy\ge \int_{B_{1/2}}   \delta^2 \bar u(x_0+h,y)   \frac{\lambda}{|y|^{n+\sigma}}\, dy
\]
for all $h\in B_1$.

Thus,
\[
 \int_{B_{1/2}}   \delta^2 \bar u(x,y)   \frac{1}{|y|^{n+\sigma}}\, dy \le C
\]
for all $x\in B_{1/2}$  with $C$ universal (this meaning that it depends only on $n$, $\sigma_0$, $\lambda$, and $\Lambda$).
This implies that
\[
[\bar u]_{W^{\sigma,\infty}(B_{1/2})} \le C.
\]

This implies, rescaling from $\bar u$ to $u$ and taking $\rho = 2R$
\[
[u]_{W^{\sigma,\infty}(B_{R})} \le CR^{\gamma} .
\]

{\em Step 3.}
For $t\ge0$, let us define
\[P^t(x):=  \int_{\R^n}\bigl(\delta^2u(x,y)-\delta^2u(0,y)\bigr)^+\, \frac{2-\sigma}{(t+ |y|)^{n+\sigma}}\,dy \]
and
\[N^t(x):=  \int_{\R^n}\bigl(\delta^2u(x,y)-\delta^2u(0,y)\bigr)^- \,\frac{2-\sigma}{(t+ |y|)^{n+\sigma}}\,dy .\]

By Step 2  we have
\begin{equation}\label{growthPN0}
  0\le  P^0 \le CR^{\gamma}\quad \mbox{and}\quad   0\le  N^0 \le CR^{\gamma} \quad \mbox{in }B_R,
\end{equation}
for all $R\ge 1$, with $C$ universal (depending only on $n$, $\sigma_0$, $\lambda$, and $\Lambda$).

Next, dividing $u$ by the universal constant $C$ in \eqref{growthPN0} we may assume
\begin{equation} \label{growthPN}
  0\le  P^0 \le 4^{k\gamma} \le 2^{k\bar \alpha} \quad \mbox{in } B_{4^k}(0) \quad \mbox{for all }k\ge0.
\end{equation}
and the same for $N^0$.
We will prove that, if  $\bar \alpha$ is taken small enough then
\begin{equation} \label{dyadicimprovement}
  0\le  P^0 \le 4^{k\bar \alpha} \quad \mbox{in } B_{4^k}(0) \quad \mbox{for all }k\le-1.
\end{equation}
and the same for $N^0$.
This and a scaling argument will easily lead to a contradiction with the growth \eqref{growthPN0} since $\gamma<\bar\alpha$ unless $P^0\equiv N^0 \equiv 0$ in all of $\R^n$.

This estimate \eqref{dyadicimprovement} on $P^0$ is proved though an iterative improvement on the maximum of $P^0$ on dyadic balls.

Indeed, our goal is to improve the  bound from above $P^0\le1$ in  $B_1$ to  $P^0\le 1-\theta$ in $B_{1/4}$, for some $\theta>0$.
After doing this, we will immediately have \eqref{dyadicimprovement} for all $k\ge1$ for some $\bar\alpha$ small (related  to $\theta$) just by scaling and iterating.
Let us thus concentrate in proving  $P^0\le 1-\theta$ in $B_{1/4}$.

Note that $P^t \le P^0$ for all $t>0$ and that $P^0= \lim_{t\to 0} P^t$ by monotone convergence.

We will assume that $P^0(x_0) \ge 1-2\theta$ for some $x_0\in B_{1/2}$. We will reach a contradiction taking $\theta$ small enough.

Define the set
\[ A = \{ y : (u(x_0+y)+u(x_0-y)-2u(x_0) - u(y) - u(-y) + 2u(0)) > 0 \}.\]
 In particular we have
\begin{align*}
P^t(x_0) = \int_{A} \bigl(\delta^2u(x_0,y)-\delta^2u(0,y)\bigr) \frac{2-\sigma}{(t+ |y|)^{n+\sigma}} dy, \\
N^t(x_0) = \int_{\R^n \setminus A} \bigl(\delta^2u(x_0,y)-\delta^2u(0,y)\bigr) \frac{2-\sigma}{(t +|y|)^{n+\sigma}} dy.
\end{align*}

We will take  $\bar \alpha$ very small (depending on $\delta_0$ below) so that \eqref{growthPN} implies
\begin{equation} \label{eeeerror}
\int_{\R^n} \bigl(P^t(y)-1\bigr)^+ \frac{2-\sigma}{|y|^{n+\sigma}} \,dy   \le \delta_0.
\end{equation}

We define $v^t$ as
\[ v^t(x) := \int_{A} \bigl(\delta^2u(x,y)-\delta^2u(0,y)\bigr) \frac{2-\sigma}{(t+|y|)^{n+\sigma}} dy. \]
Note that in particular  $P^t(x_0)=v^t(x_0)$.
Let
\begin{equation}\label{bartheta}
\bar \theta = \frac{\lambda}{4\Lambda}
\end{equation}
and define the set
\[\boldsymbol D := \{ x\in B_{1/2}\, :\, v^0 \ge (1-\bar \theta)\}.\]

Let us show that, given  $\eta>0$ we can take $\theta>0$ small enough so that we have
\begin{equation}\label{Asmall}
|\boldsymbol D|\ge (1-\eta) |B_1|.
\end{equation}

Let  $t>0$ small and $x_0 \in B_{1/4}$ be such that $P^t(x_0) = 1-3\theta$.

Note now that \eqref{Asmall} is equivalent to
\begin{equation}\label{equiv}
 \bigl| \{ x\in B_{1/2}\, :\, v^0 \le (1-\bar \theta)\}\bigr| \le \eta |B_1|.
\end{equation}
Let us prove this.

By (iii), approximating $\chi_{A}(y) (t+|y|)^{-n-\sigma}$ by $L^1$ functions $\mu$ with compact support and using the stability under uniform convergence result for subsolutions  \cite[Lemma 4.3]{CS2} we show that
\[ M^+_{\mathcal L_0} v^t \geq 0  \quad \mbox{in }\R^n. \]

We now apply the nonlocal $L^\varepsilon$ Lemma of Theorem 10.4 in \cite{CS} to the function $(1-v^t)_+$, which is an approximate supersolution in $B_{3/4}$ ---with right hand side $C\delta_0$   in \eqref{eeeerror}---. Note that this function is nonnegative in all of $\R^n$. We obtain
\[   \int_{B_{1/2}}  (1-v^t)_+^\varepsilon \le C( \theta +\delta_0)^{\varepsilon}. \]
Then, by Fatou's Lemma,
\[   \int_{B_{1/2}}  (1-v^0)_+^\varepsilon \le C( \theta +\delta_0)^{\varepsilon}. \]

Taking now $\theta$ and $\delta_0$ small enough we obtain \eqref{equiv}.

We now will obtain a contradiction from \eqref{Asmall} where $\eta$ is small to be chosen later.  We have that $v$ is larger than $(1-\bar\theta)$ in most of  $B_{1/2}$.
In that case we consider the function $w^t$ defined as $v^t$ but replacing $A$ by $\R^n\setminus A$.
\[ w^t(x) := \int_{\R^n \setminus A} \bigl(\delta^2u(x,y)-\delta^2u(0,y)\bigr)  \frac{2-\sigma}{|y|^{n+\sigma}}dy. \]

Using (iii), approximating $\chi_{\R^n\setminus A}(y) (2-\sigma) |y|^{-n-\sigma}$ by $L^1$ functions $\mu$ with compact support and using the stability under uniform convergence result for subsolutions  \cite[Lemma 4.3]{CS2} we show that
\[ M^+_{\mathcal L_0} w^t \geq 0  \quad \mbox{in }\R^n\]
for all $t>0$.

We observe that  by definition $P^0-N^0 = v^0+w^0$ and that, we have \[0 \leq P - v \leq 1-(1-\bar\theta)\le \bar \theta\quad \mbox{in }\boldsymbol D\] ---here we have used that $P^0\le 1$ in $B_1$ by \eqref{growthPN} .

In addition, reasoning similarly as in Step 2, and recalling that $u$ is second oder differentiable at 0  the viscosity inequalities
\[ M^{-}_{\mathcal L_0} (u - u(\cdot-h)) \le 0 \le M^{+}_{\mathcal L_0} (u - u(\cdot-h))  \]
for arbitrary $h$ imply the pointwise integral inequalities
\begin{equation}\label{PNcomparable}
\frac{\lambda}{\Lambda} P^0(x) \le N^0(x) \le \frac{\Lambda}{\lambda} P^0(x)
\end{equation}
for almost every $x$.
Here we are using again the convexity of $u$ and applying the Alexandrov-Bussemann-Feller Theorem and the Lemma 3.3 from \cite{CS}.

Therefore,
\[\begin{split}
 w &= (P-v) -N \le \bar \theta -N \le \bar\theta -\frac{\lambda}{\Lambda}P  \\
&\le \bar \theta - \frac{\lambda}{\Lambda} (1-\bar \theta)\\
&\le - \lambda/\Lambda + 2\bar \theta \le -c  \quad \mbox{ a.e. in } \boldsymbol D,
\end{split}\]
where $c= \lambda/2\Lambda >0$. Here we have used \eqref{bartheta}.

We thus may take $t$ small enough so that
\[ | \{ w^t > -3c/4\} |\le 2\eta    \]

We now use the ``half'' Harnack of Theorem 5.1 in \cite{CS3} applied to the function $\bar w = \bigl(w^t(r\,\cdot\,)+3c/4\bigr)^+$ (with $r>0$ small) to conclude that $w^t(0)+3c/4 \le c/2$.
Indeed, the function $\bar w$ is a subsolution and, by \eqref{growthPN},  it satisfies $0\le \bar w \le  P +3c/4  \le 2^{k\bar \alpha} \quad \mbox{in } B_{2^k/r}(0)$ and $\bar w =0$ in $\boldsymbol D/r$, which covers most of $B_{1/r}$.
Hence, taking both $r$ and $\eta$ small enough we can make $\int_{\R^n} \bar w^t(y) \omega_\sigma(y) \,dy$ as small as we wish. Thus, using Theorem 5.1  in \cite{CS3} we find that $w^t(0)+3c/4 \le c/2$ as promised.
As a consequence we obtain that $w^t(0)\le -c/4<0$; a contradiction since $w^t(0)=0$ by definition.

Therefore \eqref{dyadicimprovement} follows.

{\em Step 4.} Applying the previous Steps to the rescaled functions $\bar u =\rho^{-\sigma-\gamma} u(\rho\,\cdot\,)$ we find that
\begin{equation} \label{dyadicimprovement2}
  0\le  \bar P^0 \le C 4^{k\bar \alpha} \quad \mbox{in } B_{4^k}(0) \quad \mbox{for all }k\le-1.
\end{equation}
and the same for $\bar N^0$, where
\[\bar P^0(x):=  \int_{\R^n}\bigl(\delta^2\bar u(x,y)-\delta^2\bar u(0,y)\bigr)^+\, \frac{2-\sigma}{ |y|^{n+\sigma}}\,dy \]
and
\[\bar N^0(x):=  \int_{\R^n}\bigl(\delta^2\bar u(x,y)-\delta^2\bar u(0,y)\bigr)^- \,\frac{2-\sigma}{ |y|^{n+\sigma}}\,dy .\]

This implies that
\[ \sup_{B_{4^{-l} \rho}} P_0 =  \rho^{\gamma} \sup_{B_{4^{-l}}} \bar P_0  \le C \rho^\gamma 4^{-l\bar\alpha },\quad l\ge 1.\]
Thus, taking $\rho= R_0 4^{l}$ we obtain
\[ \sup_{B_{R_0}} P_0 \le  \limsup_{l\to +\infty} C (R_0)^\gamma 4^{-l(\bar\alpha-\gamma)} =0.\]
Since this is for arbitrary $R_0$ we obtain $P_0\equiv 0$. Similarly $N_0\equiv 0$. This implies that $\delta^2 u(x,y)$ is constant in $x$ and thus $u$ is a quadratic polynomial.
\end{proof}

\end{document}